\theoremstyle{plain}
\newtheorem{theorem}{Theorem}[section]		
\newtheorem{lemma}[theorem]{Lemma}
\newtheorem{claim}[theorem]{Claim}
\newtheorem{proposition}[theorem]{Proposition}
\theoremstyle{remark}
\newtheorem{question}[theorem]{Question}
\def\C{\mathcal{C}}
\def\X{\mathcal{X}}
\def\B{\mathcal{B}}
\def\PP{\mathcal{P}}
\def\Ex{\mathbb{E}}
\let\phi\varphi
\def \Sp {S^+}
\def \Sm {S^-}
\def \sp {s^+}
\def \smm {s^-}
\def \Ap {A^+}
\def \Am {A^-}
\def \Np {N^+}
\def \Nm {N^-}
\def \dpp {d^+}
\def \dm {d^-}
\def \Up {U^+}
\def \Um {U^-}
\def \Vgp {V_{\good}^+}
\def \Vbp {V_{\bad}^+}
\def \Vbm {V_{\bad}^-}
\def \Vgp {V_{\good}^+}
\def \Vgm {V_{\good}^-}
\def \Vo {V_{\okay}}
\def \Vg {V_{\good}}
\def \Vb {V_{\bad}}
\def \ap {a^+}
\def \am {a^-}
\def \Vgm {V_{\good}^-}
\def \Nmu{N^{\mu}_{\good}}
\def \Nnuo{N^{\nu}_{\okay}}
\def \Npo{N^+_{\okay}}
\def \Nmo{N^-_{\okay}}
\def \Npg{N^+_{\good}}
\def \Nmg{N^-_{\good}}
\def \Dm {\Delta^-}
\def \Np {N^+}
\def \Nm {N^-}
\renewcommand{\Pr}{\mathbb{P}}
\let\emptyset\varnothing
\let\originalleft\left
\let\originalright\right
\renewcommand{\left}{\mathopen{}\mathclose\bgroup\originalleft}
\renewcommand{\right}{\aftergroup\egroup\originalright}
\def\imod#1{\allowbreak\mkern10mu({\operator@font mod}\,\,#1)}
\newcommand{\sm}{\setminus}
\newcommand{\A}{\mathcal{A}}
\DeclareMathOperator{\bad}{bad}
\DeclareMathOperator{\good}{good}
\DeclareMathOperator{\okay}{okay}
\title{Partitioning a tournament into sub-tournaments of high connectivity }
\author{Ant\'onio Gir\~ao\thanks{
		Mathematical Institute, 
		University of Oxford,
		Andrew Wiles Building, 
		Radcliffe Observatory Quarter, 
		Woodstock Road,
		Oxford, UK.
		Email: \texttt{girao}@\texttt{maths.ox.ac.uk}.
		Research supported by EPSRC grant EP/V007327/1.
	}
	\and
	Shoham Letzter\thanks{
		Department of Mathematics, 
		University College London, 
		Gower Street, London WC1E~6BT, UK. 
		Email: \texttt{s.letzter}@\texttt{ucl.ac.uk}. 
		Research supported by the Royal Society.   
	}
}
\date{}
\begin{document}

\maketitle

\begin{abstract}
	\setlength{\parskip}{\medskipamount}
    \setlength{\parindent}{0pt}
    \noindent

	We prove that there exists a constant $c > 0$ such that the vertices of every strongly $c \cdot kt$-connected tournament can be partitioned into $t$ parts, each of which induces a strongly $k$-connected tournament. This is clearly tight up to a constant factor, and it confirms a conjecture of K\"uhn, Osthus and Townsend (2016).

\end{abstract}

\section{Introduction}

	A classical result of Hajnal \cite{hajnal1983partition} and Thomassen \cite{thomassen1983graph} asserts that for every integer $k \ge 1$ there exists an integer $K$ such that the vertices of every $K$-connected graph can be partitioned into two sets inducing $k$-connected subgraphs. There is now a whole area of combinatorial problems concerned with questions of this type; namely, to understand whether for a certain (di)graph property any (di)graph which \textit{strongly} satisfies that property has a partition into many parts where each part still has the property. In this paper, we consider the analogue of Hajnal and Thomassen's results for tournaments.

	A digraph $D$ is said to be \emph{strongly connected} if for every $u, v \in V(D)$ there is a directed path from $u$ to $v$, and it is \emph{strongly $k$-connected} if $|D| \ge k+1$ and $D \setminus Z$ is strongly connected for every subset $Z \subseteq V(D)$ of size at most $k$. Recall that a \emph{tournament} is an orientation of a complete graph. 
	Thomassen asked (see \cite{reid1989three}) if for every sequence $k_1, \ldots, k_t$ of positive integers there exists $K$ such that if $T$ is a strongly $K$-connected tournament then there is a partition $\{V_1, \ldots, V_t\}$ of $V(T)$ such that $T[V_i]$ is $k_i$-connected for every $i \in [t]$. Denote the minimum such $K$ by $f_t(k_1, \ldots, k_t)$ (and put $f_t(k_1, \ldots, k_t) := \infty$ if there is no such $K$).

	It is easy to see that $f_t(k, 1, \ldots, 1) \le k + 3t - 3$. Chen, Gould and Li \cite{chen2001partitioning} proved that every strongly $t$-connected tournament on at least $8t$ vertices can be partitioned into $t$ strongly connected tournaments (this is clearly optimal, apart from the assumption on the number of vertices). The existence of $f_2(2,2)$ remained open until the work of K\"uhn, Osthus and Townsend \cite{kuhn2016proof} who proved that $f_t(k_1, \ldots, k_t)$ is finite for all positive integers $k_1, \ldots, k_t$. Specifically, they showed $f_t(k, \ldots, k) = O(k^7 t^4)$ and conjectured $f_t(k, \ldots, k) = O(kt)$ (which would be tight up to the implicit constant factor). More recently, Kang and Kim~\cite{KangKim} proved a better upper bound on  $f_t(k,\ldots ,k)$ showing that any tournament on $n$ vertices which is $O(k^4t)$-strongly connected can be partitioned into $t$ strongly connected tournaments where each part has a prescribed size provided all sizes are $\Omega(n)$. 
	Our main result proves the conjecture of K\"uhn, Osthus and Townsend.  
	
	\begin{theorem} \label{thm:main}
		There exists a constant $c > 0$\footnote{It probably suffices to take $c = 10^{100}$.} such that for every positive integers $k$ and $t$, if $T$ is a strongly $c \cdot kt$-connected tournament, then there is a partition $\{V_1, \ldots, V_{t}\}$ of $V(T)$ such that $T[V_i]$ is strongly $k$-connected for $i \in [t]$.
	\end{theorem}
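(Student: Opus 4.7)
The natural strategy is iterative extraction, reducing the theorem to the following one-step lemma: there exist absolute constants $C, c_0 > 0$ such that every strongly $Ck$-connected tournament $T$ contains a vertex set $V_1$ with $|V_1| \leq c_0 k$ for which $T[V_1]$ is strongly $k$-connected. Granting this and taking, say, $c = c_0 + C$, the theorem follows by iteration: starting from a strongly $ckt$-connected tournament, extract $V_1$ of size at most $c_0 k$; then $T - V_1$ has strong connectivity at least $ckt - c_0 k$, which still exceeds $Ck$ as long as further parts remain to be found, so the one-step lemma applies again. After $t-1$ iterations, the leftover tournament --- call it $V_t$ --- has strong connectivity at least $ckt - (t-1) c_0 k = Ckt + c_0 k \geq k$, hence serves as the final part.

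For the one-step lemma, which is the heart of the matter, I would construct a small strongly $k$-connected sub-tournament --- a ``gadget'' --- piece by piece. Pick a seed vertex $v_0$; its in- and out-neighborhoods $N^-(v_0), N^+(v_0)$ each have size at least $Ck$. Since $T$ is strongly $Ck$-connected, Menger's theorem provides $\Omega(k)$ internally vertex-disjoint paths from $N^+(v_0)$ back to $N^-(v_0)$ inside $T - v_0$. Selecting such paths carefully, possibly truncating them and attaching a few ``balancing'' auxiliary vertices, one aims to assemble a strongly $k$-connected sub-tournament on $O(k)$ vertices.

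The main obstacle is that strong $k$-connectivity is a \emph{global} property: every subset of fewer than $k$ vertices of $V_1$ must leave $T[V_1]$ strongly connected, and there are many such subsets to rule out. My plan would be to classify the external vertices, relative to the evolving partial gadget, by their in-/out-degree profile into three classes --- \emph{good} (balanced in both directions, hence safely absorbable), \emph{okay} (moderately imbalanced, manageable via careful bookkeeping), and \emph{bad} (severely imbalanced, but few in number thanks to the high global connectivity of $T$) --- and then argue that any would-be small cut inside the final gadget lifts to a vertex cut of $T$ of size less than $Ck$, a contradiction. Pulling off this lifting while keeping $|V_1| = O(k)$, and in particular handling the okay and bad vertices without inflating the gadget, is where the most delicate technical work lies, and is precisely where a naive random construction would fail.
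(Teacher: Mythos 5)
Your reduction to the ``one-step lemma'' is arithmetically sound: if every strongly $Ck$-connected tournament contained a strongly $k$-connected subtournament on at most $c_0k$ vertices, then iterating $t-1$ times and taking the leftover as the last part would indeed prove the theorem. But that lemma is not a stepping stone --- it \emph{is} the entire difficulty, and nothing in your sketch proves it. In fact it is a strictly stronger statement than what the paper establishes: the paper's $k$-connected parts are assembled from gadgets $U(\alpha)$ built around the Menger paths $P(\alpha)$, which can be arbitrarily long, and the resulting parts have size up to $n/t$, not $O(k)$. Whether an $O(k)$-connected tournament must even contain a $k$-connected subtournament on $O_k(1)$ vertices is not addressed by the paper and, to my knowledge, is not known; had it been, the Kühn--Osthus--Townsend conjecture would have fallen to exactly your three-line iteration back in 2016.

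Two concrete points where your sketch of the one-step lemma breaks. First, the Menger paths from $N^+(v_0)$ to $N^-(v_0)$ may each have length $\Theta(n)$, so they cannot be placed inside a set of size $O(k)$; once you ``truncate'' them they are no longer paths of $T[V_1]$, and the connectivity they were supposed to certify evaporates. (The paper keeps the full paths, pays for it with large parts, and extracts usable structure from a \emph{minimality} assumption on the path system --- \Cref{claim:nbds-paths} --- rather than from truncation.) Second, the claim that ``any would-be small cut inside the final gadget lifts to a vertex cut of $T$ of size less than $Ck$'' is false as a proof mechanism: a set $Z$ with $|Z|<k$ separating $T[V_1]\setminus Z$ into $X$ and $Y$ is in no sense a cut of $T$, since $X$ can typically reach $Y$ through the $n-|V_1|$ external vertices; to turn it into a cut of $T$ you would have to delete external vertices as well, and there is no bound on how many. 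Your good/okay/bad trichotomy echoes the paper's $V_{\good},V_{\okay},V_{\bad}$ in name only; in the paper those classes control which vertices can be \emph{absorbed into large parts} via the sets $S^\pm(\alpha)$, not which cuts can be ``lifted.'' As it stands, the proposal reduces the theorem to an unproved (and possibly harder) statement, so there is a genuine gap.
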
 

	We give an overview of the proof in \Cref{sec:overview}, state a few simple probabilistic tools in \Cref{sec:prelims}, and dive into the proof of \Cref{thm:main} in \Cref{sec:proof}. We conclude the paper in \Cref{sec:conclusion} with some open problems.

	Throughout the paper, when we say a tournament is \emph{$k$-connected} we mean that it is strongly $k$-connected, and by a \emph{path} we mean a directed path. We will omit floor and ceiling signs whenever it does not affect the argument.

\section{Overview of proof} \label{sec:overview}

	Let $c$ be a large constant, and let $G$ be a $c\cdot kt$-connected tournament.
	We start the proof by finding $\Omega(kt)$ pairwise disjoint `gadgets' $U(\alpha)$, with $\alpha \in \A$ for some index set $\A$, with special sets $\Sp(\alpha), \Sm(\alpha) \subseteq U(\alpha)$, such that the following properties hold:
	for every $u \in \Sm(\alpha)$ and $v \in \Sp(\alpha)$, there is a directed path in $U(\alpha)$ from $u$ to $v$; most vertices in $G$ have an out-neighbour in all but at most $kt$ sets $\Sm(\alpha)$; and similarly for in-neighbours in $\Sp(\alpha)$ (see \Cref{subsec:gadgets}). We note that similar gadgets are constructed in \cite{kuhn2016proof}. One new ingredient allows us to obtain the following additional property: there is a vertex $\sp(\alpha) \in \Sp(\alpha)$ such that almost every in-neighbour of $\sp(\alpha)$ is also an in-neighbour of $u$, for all but $O(1)$ vertices $u \in U(\alpha)$; and there exists $\smm(\alpha) \in \Sm(\alpha)$ with the analogous property for out-neighbours.

	To sketch the remainder of the proof, let us pretend that \emph{all} vertices have out-neighbours in all but at most $kt$ sets $\Sm(\alpha)$ and in-neighbours in all but at most $kt$ sets $\Sp(\alpha)$.
	We now proceed in four steps.

	In the first step (given in \Cref{subsec:available}) we remove some of the gadgets, deterministically and randomly, so that every vertex $u$ in a surviving gadget $U(\alpha)$ has $\Omega(kt)$ out- and in-neighbours that are either in $U(\alpha)$ or are not in a surviving gadget. Here it is crucial to have the latter property regarding $\sp(\alpha)$ and $\smm(\alpha)$, because effectively this means that we need to guarantee that $u$ satisfies the above property for $O(1)$ vertices $u$ in $U(\alpha)$, even if $U(\alpha)$ itself is large.

	In the second step (see \Cref{subsec:eligible}) we find $\Theta(t)$ disjoint groups of $\Theta(k)$ gadgets, such that every vertex $u$ in one of these gadgets $U(\alpha)$ has $\Omega(kt)$ out- and in-neighbours (either in $U(\alpha)$ or outside of these gadgets), each of which has an out-neighbour in $\Sm(\beta)$ for all but at most $t$ gadgets in $U(\alpha)$'s group, and an in-neighbour in $\Sp(\beta)$ for all but at most $t$ gadgets in the same group. To achieve this, we randomly partition the collection of gadgets from the previous step into $\Theta(t)$ parts, and then remove some of the parts and some of the gadgets.
	
	The third step (see \Cref{subsec:connected}) finds $t$ disjoint $k$-connected sets, each containing at least $10k$ gadgets. To do this, we first randomly assign each of the vertices not covered by the gadgets described in the previous paragraph into one of the groups of gadgets, and show that with positive probability, many of these augmented groups of gadgets contain a $k$-connected set.

	Finally, in \Cref{subsec:partition}, we assign each uncovered vertex $u$ to a $k$-connected set $U$ found in the previous paragraph which has at least $k$ in- and out-neighbours of $u$. 
	(The assumption that each group contains at least $10k$ gadgets helps here.)

	Recall, though, that this proof sketch assumed that every vertex has an out-neighbour in all but at most $kt$ sets $\Sm(\alpha)$ and similarly for in-neighbours. This need not be the case, however, and that complicates each of the above four steps. Let $\Vgp$ be the set of vertices that have out-neighbours in all but at most $kt$ sets $\Sm(\alpha)$, and define $\Vgm$ similarly. In the first step, instead of aiming for $\Omega(kt)$ out-neighbours not covered by gadgets, we aim for either $\Omega(kt)$ out-neighbours in $\Vgp \cap \Vgm$, or $\Omega(kt)$ out-neighbours in $\Vgp$, each of which has $\Omega(kt)$ in-neighbours in $\Vgp \cap \Vgm$, etc. We make similar adjustments in other steps.

\section{Notation and preliminaries} \label{sec:prelims}
	In this section we state a few probabilistic results. The following is a corollary of Hoeffding's inequality. 

	\begin{proposition} \label{prop:hoeffding}
		Let $\eta_1, \eta_2$ satisfy $\eta_1 > 4\eta_2 > 0$ and suppose that $m_1, \ldots, m_r \in [0, \eta_2 \ell]$ satisfy $m_1 + \ldots + m_r \ge \eta_1 \ell$. If $X_1, \ldots, X_r$ are independent random variables such that $X_j$ takes values $0$ and $m_j$ and $\Pr[X_j = m_j] \ge 1/2$, then
		\begin{equation} \label{eqn:hoeffding}
			\Pr[X_1 + \ldots + X_r \ge \eta_2 \ell] \ge 1 - \exp(-\eta_1/8\eta_2).
		\end{equation}
	\end{proposition}

	\begin{proof}
		Notice that we may assume that $m_1 + \ldots + m_r = \eta_1 \ell$, by possibly decreasing some values of $m_i$ and noticing that the probability in \eqref{eqn:hoeffding} does not increase by this operation.
		Write $X := X_1 + \ldots + X_r$.
		By Hoeffding's inequality,
		\begin{align*}
			\Pr\left( X \le \eta_2 \ell \right) 
			\le \Pr\left(\Ex[X] - X \ge \frac{\eta_1 \ell}{2} - \eta_2 \ell \right) 
			& \le \Pr\left(\Ex[X] - X \ge \frac{\eta_1 \ell}{4} \right) \\
			& \le \exp\left( -\frac{2(\eta_1 \ell)^2}{16\sum_{i \in [r]} m_i^2}\right) \\
			& \le \exp \left( -\frac{(\eta_1\ell)^2}{8 \cdot \frac{\eta_1 \ell}{\eta_2 \ell} \cdot (\eta_2 \ell)^2} \right)
			= \exp\left(-\frac{\eta_1}{8\eta_2}\right). \qedhere
		\end{align*}
	\end{proof}

	The next proposition is a simple probabilistic observation that we will use many times.

	\begin{proposition} \label{prop:markov}
		Let $X_1, \ldots, X_r$ be $0,1$-random variables. Suppose that $\Pr[X_i = 1] \ge 1 - \eta^2$ for every $i \in [r]$. Then 
		\begin{equation*}
			\Pr[X_1 + \ldots + X_r \ge (1 - \eta)r] \ge 1 - \eta.
		\end{equation*}
	\end{proposition}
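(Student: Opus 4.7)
The statement is a pure Markov-inequality bound, and independence of the $X_i$ is not actually needed: only linearity of expectation and Markov. So the plan is to pass to the complementary indicators $Y_i := 1 - X_i$, estimate their expected sum, and apply Markov.

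Concretely, I would set $Y_i := 1 - X_i$, so that $Y_i$ is a $0,1$-variable with $\Ex[Y_i] = \Pr[Y_i = 1] = 1 - \Pr[X_i = 1] \le \eta^2$. By linearity of expectation,
\begin{equation*}
    \Ex[Y_1 + \ldots + Y_r] \le \eta^2 r.
\end{equation*}
Since $Y_1 + \ldots + Y_r$ is non-negative, Markov's inequality gives
\begin{equation*}
    \Pr[Y_1 + \ldots + Y_r \ge \eta r] \le \frac{\eta^2 r}{\eta r} = \eta.
\end{equation*}
Equivalently, with probability at least $1 - \eta$ we have $Y_1 + \ldots + Y_r < \eta r$, which rearranges to $X_1 + \ldots + X_r = r - (Y_1 + \ldots + Y_r) > (1 - \eta) r$, and in particular $X_1 + \ldots + X_r \ge (1 - \eta) r$.

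There is no real obstacle here: the only point to watch is the direction of the inequality when converting from the $Y_i$ back to the $X_i$ (the strict inequality $Y_1 + \ldots + Y_r < \eta r$ becomes the non-strict $X_1 + \ldots + X_r \ge (1-\eta)r$ we need). The hypothesis of independence is stated but unused, which is fine; the name of the proposition (\emph{markov}) already signals that this is the intended one-line argument.
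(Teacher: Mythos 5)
Your proof is correct and is essentially the paper's argument in a different guise: the paper bounds $\Ex[X_1+\ldots+X_r]$ from above by splitting on the event $\{X_1+\ldots+X_r \ge (1-\eta)r\}$, which is exactly the reverse-Markov computation you perform by applying Markov's inequality to the complementary sum $\sum_i(1-X_i)$. Both versions use only linearity of expectation and boundedness, and neither uses independence.
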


	\begin{proof}
		Write $X := X_1 + \ldots + X_r$.
		By assumption, $\Ex[X] \ge (1 - \eta^2)r$ and $X \le r$. Write $p := \Pr[X \ge (1 - \eta)r]$. Then $\Ex[X] \le p \cdot r + (1 - p) \cdot (1 - \eta)r = (1 - \eta(1 - p)) r$. It follows that $(1 - \eta^2)r \le (1 - \eta(1 - p))r$, implying $\eta(1 - p) \le \eta^2$, i.e.\ $p \ge 1 - \eta$, as claimed.
	\end{proof}

	To conclude the section, we state Chernoff's bounds, which we will use extensively.

	\begin{lemma} \label{lem:chernoff}
		Let $X$ be the sum of independent random variables taking values in $\{0, 1\}$, and write $\mu := \Ex[X]$. Then the following holds for $\delta \in [0, 1]$.
		\begin{align*}
			& \Pr[X \le (1-\delta)\mu] \le \exp(-\delta^2 \mu / 2) \\
			& \Pr[X \ge (1+\delta)\mu] \le \exp(-\delta^2 \mu / 3).
		\end{align*}
	\end{lemma}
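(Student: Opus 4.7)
The plan is to run the standard moment generating function argument (Chernoff--Bernstein). Write $X = X_1 + \ldots + X_r$ with $p_i := \Pr[X_i = 1]$, so $\mu = \sum_i p_i$. For the upper tail, I would fix $s > 0$, apply Markov's inequality to $e^{sX}$, and use independence to get
\begin{equation*}
    \Pr[X \ge (1+\delta)\mu] \le e^{-s(1+\delta)\mu} \prod_{i=1}^r \Ex[e^{sX_i}].
\end{equation*}
Since $\Ex[e^{sX_i}] = 1 + p_i(e^s - 1) \le \exp(p_i(e^s - 1))$ by $1 + x \le e^x$, the product is at most $\exp(\mu(e^s - 1))$. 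Choosing the optimal $s = \ln(1+\delta)$ collapses the bound to $\left(e^\delta/(1+\delta)^{1+\delta}\right)^\mu$.

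For the lower tail, I would symmetrically apply Markov to $e^{-sX}$ with $s > 0$, use the identical bound $\Ex[e^{-sX_i}] \le \exp(p_i(e^{-s} - 1))$, and then take $s = \ln\bigl(1/(1-\delta)\bigr)$ to obtain the bound $\left(e^{-\delta}/(1-\delta)^{1-\delta}\right)^\mu$.

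To conclude, it remains to verify the two elementary analytic inequalities
\begin{equation*}
    (1+\delta)\ln(1+\delta) - \delta \ge \delta^2/3 \quad \text{and} \quad (1-\delta)\ln(1-\delta) + \delta \ge \delta^2/2 \qquad \text{for } \delta \in [0,1].
\end{equation*}
Both follow by setting $f(\delta)$ equal to the left-hand side minus the right-hand side, checking $f(0) = 0$, and showing $f'(\delta) \ge 0$ on $[0,1]$; alternatively one expands $\ln(1 \pm \delta)$ as a power series and compares coefficients term-by-term, which is particularly clean for the $(1-\delta)$ case.

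The argument is entirely routine and there is no real obstacle; the only place requiring minor care is the second inequality near $\delta = 1$, where $(1-\delta)\ln(1-\delta) \to 0$ and one must verify the bound holds up to the endpoint. Since this is a standard statement used as a black box in the rest of the paper, I would present the proof compactly, citing the exponential-moment trick and deferring the two scalar inequalities to a one-line derivative check.
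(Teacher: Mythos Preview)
Your argument is correct and is exactly the standard moment-generating-function proof of Chernoff's bounds. Note, however, that the paper does not prove this lemma at all: it is simply \emph{stated} as a standard probabilistic tool (``To conclude the section, we state Chernoff's bounds, which we will use extensively''), with no proof given. So there is nothing to compare against; your proposal supplies the routine textbook derivation that the paper omits as well-known.
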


\section{The proof} \label{sec:proof}

	In this section we prove our main theorem, \Cref{thm:main}.

	\begin{proof}[Proof of \Cref{thm:main}]
		Observe that it suffices to prove the existence of a suitable constant $c$ such that for \emph{large enough} $k$ and every $t \ge 2$, the vertices of every $c \cdot kt$-connected tournament can be partitioned into $t$ sets which induce $k$-connected tournaments. Throughout our proof we indeed assume that $k$ is large enough.

		Pick constants $\rho, \sigma_1, \sigma_2, \sigma_3, \tau_1, \tau_2, \tau_3$ as follows.
		\begin{align} \label{eqn:values}
			\begin{split}
				& \rho = 10^4 \qquad \sigma_1 = 10^{60} \qquad \sigma_2 = 10^4 \qquad \sigma_3 = 10. \\
				& k \gg \tau_1 \gg \tau_2 \gg \tau_3 \gg \rho, \sigma_1, \sigma_2, \sigma_3.
			\end{split}
		\end{align}
		Let $T$ be a $\tau_1 kt$-connected tournament with vertex set $V$.
		Our aim is to find a partition $\{V_1, \ldots, V_t\}$ of $V$ such that $T[V_i]$ is $k$-connected for every $i \in [t]$. This will be done in five steps.

	\subsection{Building gadgets} \label{subsec:gadgets}
		The first step in our proof is the construction of $\sigma_1 kt$ gadgets, which are the sets $U(\alpha)$ obtained by the following proposition.
		The construction of the gadgets is done similarly to \cite{kuhn2016proof} (see page 6), with several differences. First, the size of the sets $\Sm(\alpha)$ and $\Sp(\alpha)$ (corresponding to $A_i$ and $B_i$ in \cite{kuhn2016proof}) is much smaller than in \cite{kuhn2016proof}. Second, we construct $\sigma_1 kt$ gadgets, which is quite a lot more gadgets than we need ($10kt$) for the partition, to allow for some flexibility (in later steps we will discard some of the gadgets, randomly and deterministically). Third, a minimality assumption on the paths $P(\alpha)$ which join the sets $\Sm(\alpha)$ and $\Sp(\alpha)$ (see the three paragraphs before \Cref{fig:gadget}) allows us to find a small set $X(\alpha)$ as in \ref{itm:gadget-neighbourhood}. Finally, to compensate for the smaller size of $\Sm(\alpha)$ and $\Sp(\alpha)$ we need to consider sets $\Vbm$ and $\Vbp$, which are relatively small sets of exceptional vertices. The third point is probably the most crucial new ingredient here.

		\begin{proposition} \label{prop:gadgets}
			There exist sets of vertices $\Sp(\alpha), \Sm(\alpha) \subseteq S(\alpha) \subseteq U(\alpha) \subseteq V$ and $X(\alpha) \subseteq V$, indexed by a set $\A_1$ of size $\sigma_1 kt$, and vertices $\sp(\alpha), \smm(\alpha) \in S(\alpha)$, satisfying the following properties.
			\begin{enumerate} [label = \rm(G\arabic*)]
				\item \label{itm:gadget-disjoint}
					The sets $U(\alpha)$, with $\alpha \in \A_1$, are pairwise disjoint.
				\item \label{itm:gadget-Y-small}
					$|S(\alpha)| \le \rho$ and $|X(\alpha)| \le \rho \sigma_1 kt$, for $\alpha \in \A_1$.
				\item \label{itm:gadget-path}
					For every $\alpha \in \A_1$, $u \in \Sm(\alpha)$ and $v \in \Sp(\alpha)$, there is a path in $T[U(\alpha)]$ from $u$ to $v$.
				\item \label{itm:gadget-neighbourhood}
					Every in-neighbour of $\sp(\alpha)$ in $U(\alpha) \sm X(\alpha)$ is also an in-neighbour of every vertex in $U(\alpha) \setminus S(\alpha)$, for $\alpha \in \A_1$.
					Analogously for $\smm(\alpha)$ with respect to out-neighbours.
			\end{enumerate}

			Let $\Vo = V \setminus \bigcup_{\alpha} S(\alpha)$, let $\Vbp$ be the set of vertices in $\Vo$ that have no out-neighbours in at least $kt$ sets $\Sm(\alpha)$, and let $\Vbm$ be the set of vertices in $\Vo$ that have no in-neighbours in at least $kt$ sets $\Sp(\alpha)$.
			\begin{enumerate} [resume, label = \rm(G\arabic*)]
				\item \label{itm:gadget-good}
					Every $u \in \Vo$ satisfies $\dpp(u) \ge 10^{12} \cdot |\Vbp|$ and $\dm(u) \ge 10^{12} \cdot |\Vbm|$.
			\end{enumerate}
		\end{proposition}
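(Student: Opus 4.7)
I would construct the $\sigma_1 kt$ gadgets iteratively, one at a time. Suppose by induction that pairwise-disjoint $U(1), \ldots, U(\alpha-1)$ have already been built; write $V'_\alpha := V \setminus \bigcup_{\beta<\alpha} U(\beta)$. Because the total number of vertices removed so far will be controlled to be $\ll \tau_1 kt$ (using $\tau_1 \gg \sigma_1 \rho$), $T[V'_\alpha]$ remains strongly connected. I pick carefully chosen vertices $\smm(\alpha), \sp(\alpha) \in V'_\alpha$ and let $P(\alpha)$ be a \emph{shortest} directed path from $\smm(\alpha)$ to $\sp(\alpha)$ inside $T[V'_\alpha]$. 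Take $U(\alpha) := V(P(\alpha))$; choose $\Sm(\alpha)\ni \smm(\alpha)$ of size at most $\rho/3$ consisting of initial vertices of $P(\alpha)$, choose $\Sp(\alpha)\ni\sp(\alpha)$ analogously from its terminal vertices; set $S(\alpha) := \Sm(\alpha)\cup\Sp(\alpha)$ together with three buffer vertices at each end of $P(\alpha)$, and tentatively take $X(\alpha) := \emptyset$.

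Properties (G1) and (G2) are immediate from the construction. For (G3), since $\Sm(\alpha)$ sits in the initial portion of $P(\alpha)$ and $\Sp(\alpha)$ in the terminal portion, the appropriate subpath of $P(\alpha)$ joins any $u \in \Sm(\alpha)$ to any $v \in \Sp(\alpha)$ inside $U(\alpha)$. Property (G4) is the main payoff of the minimality of $P(\alpha)$: write $P(\alpha) = v_0 v_1 \ldots v_L$ with $v_L = \sp(\alpha)$, and take any $w \in N^-(\sp(\alpha))$. If $w \notin V(P(\alpha))$, then for each $i \le L-3$ the minimality of $P(\alpha)$ forbids $v_i \to w$ (else $v_0 \ldots v_i\, w\, v_L$ is a strictly shorter path), forcing $w \to v_i$. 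If $w = v_j \in V(P(\alpha))$, then $v_j \to v_L$ combined with minimality forces $j = L-1$; and by the buffer choice $v_{L-1} \in S(\alpha)$. Either way, $w$ is an in-neighbour of every vertex of $U(\alpha) \setminus S(\alpha) \subseteq \{v_3, \ldots, v_{L-3}\}$. The analogous property at $\smm(\alpha)$ follows by reversing directions.

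The main obstacle is arranging (G5), namely $|\Vbp|, |\Vbm|\le \tau_1 kt/10^{12}$. Double counting reduces this to the need for each $\Sm(\alpha)$ to be (approximately) in-dominating in $\Vo$, since
$|\Vbp|\cdot kt \le \sum_\alpha |\{v\in\Vo : N^+(v)\cap \Sm(\alpha)=\emptyset\}|$.
The classical greedy argument---repeatedly adding a vertex of maximum in-degree in the currently uncovered subtournament, which always exists by averaging---shows that an in-dominating set of size $\rho/3$ leaves at most $n / 2^{\rho/3}$ vertices uncovered; summing over gadgets gives $|\Vbp|\le \sigma_1 n/2^{\rho/3}$, which stays well below $\tau_1 kt / 10^{12}$ because $\tau_1 \gg 10^{12}\sigma_1$ and $\rho$ is a sufficiently large constant. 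The delicate point is that $\Sm(\alpha)$ is forced to sit inside $U(\alpha)$, which is the shortest path $P(\alpha)$; so we cannot pick $\Sm(\alpha)$ as a free in-dominating set. The iterative construction must therefore choose the endpoints $\smm(\alpha), \sp(\alpha)$ and the connecting path together with the side-constraint that an (approximately) in-dominating subset of $\rho/3$ vertices can be carved out of the initial segment of $P(\alpha)$, and analogously for $\Sp(\alpha)$ at the other end. Orchestrating this coupling between path-choice and dominating-set-choice---while preserving pairwise disjointness across all $\sigma_1 kt$ iterations and the minimality of each $P(\alpha)$ needed for (G4)---is where the bulk of the technical effort is concentrated.
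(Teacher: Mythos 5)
The proposal correctly identifies the two mechanisms at play (path minimality for \ref{itm:gadget-neighbourhood}, greedy in-domination plus double counting for \ref{itm:gadget-good}), but both halves have genuine gaps, and the one in \ref{itm:gadget-good} is fatal as written. You reduce \ref{itm:gadget-good} to $|\Vbp|,|\Vbm|\le \tau_1 kt/10^{12}$ and then bound $|\Vbp|\le \sigma_1 n/2^{\rho/3}$ by the classical greedy argument. Since $\rho$ is a fixed constant and $n$ may be arbitrarily large compared with $kt$, the quantity $\sigma_1 n/2^{\rho/3}$ is \emph{not} below $\tau_1 kt/10^{12}$; indeed no absolute $O(kt)$ bound on $|\Vbp|$ is attainable with constant-size sets $\Sm(\alpha)$, which is exactly why \ref{itm:gadget-good} is phrased as the \emph{relative} bound $\dm(u)\ge 10^{12}|\Vbm|$. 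The paper's fix is to root each greedy set $\Sp(a)$ at a vertex $a$ chosen among the $\sigma_1 kt$ vertices of largest out-degree: the greedy process then only needs to dominate the in-neighbourhood of $a$, leaving at most $\dm(a)/2^{\rho/10}\le \Dm/(10^{12}\sigma_1)$ undominated vertices per gadget (where $\Dm:=\max_{a\in\Ap}\dm(a)$), so $|\Vbm|\le \Dm/10^{12}$, while every vertex outside $\Ap$ has in-degree at least $\Dm$ precisely because $\Ap$ consists of the top out-degree vertices. You also explicitly leave unresolved the ``coupling'' between choosing a shortest path and carving a dominating set out of its initial segment --- which you correctly flag as the crux --- whereas the paper sidesteps it entirely: the sets $\Sp(a),\Sm(a)$ are built first as free transitive fans (with sink, resp.\ source, $a$), and only afterwards are the sink of $\Sm$ and the source of $\Sp$ joined by vertex-disjoint paths obtained from the connectivity; $U(\alpha)$ is the union, and \ref{itm:gadget-path} holds by routing through the transitive structure.

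For \ref{itm:gadget-neighbourhood}, taking $X(\alpha)=\emptyset$ cannot work. Your shortest-path argument only controls vertices of $V'_\alpha$; it says nothing about edges between $P(\alpha)$ and the previously deleted sets $\bigcup_{\beta<\alpha}U(\beta)$, and those are not of size $O(\rho\sigma_1 kt)$: shortest paths in a $\Theta(kt)$-connected tournament can have length $\Theta(n/kt)$ (a blown-up ``backward'' tournament shows this), so the claim $|V\setminus V'_\alpha|\ll\tau_1 kt$ is also unjustified, which in turn threatens the strong connectivity of $T[V'_\alpha]$ needed to continue the iteration. The paper instead finds all $\sigma_1 kt$ paths simultaneously and imposes minimality on the \emph{whole system}; a rerouting argument across two paths then shows that at most two vertices of each other path $P(\beta)$ are exceptional, which is what yields a nonempty $X(\alpha)$ of size at most $\rho\sigma_1 kt$ rather than zero.
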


		\begin{proof}

			Let $\Ap$ be the set of $\sigma_1 kt$ vertices in $V$ with largest out-degrees (breaking ties arbitrarily), and let $\Am$ be the set of $\sigma_1 kt$ vertices in $V$ with largest in-degrees (note that $|T| > 2 \sigma_1 kt$, so we may assume that $\Ap$ and $\Am$ are disjoint). We define sets $\Sp(a)$, for $a \in \Ap$, and sets $\Sm(a)$, for $a \in \Am$, as follows.

			Let $a_1, \ldots, a_{|\Ap|}$ be an arbitrary ordering of the vertices in $\Ap$. Having defined $\Sp(a_1), \ldots, \Sp(a_{i-1})$, let $\Up_i := V \setminus (\Ap \cup \Am \cup \Sp(a_1) \cup \ldots \cup \Sp(a_{i-1}))$, so $\Up_i$ is the set of vertices that are currently unused. 
			Pick a sequence $u_{i,0}, \ldots, u_{i,m_i}$ as follows. Set $u_{i,0} := a_i$. Having defined $u_{i,0}, \ldots, u_{i,j-1}$, let $\Up_{i,j}$ be the set of vertices in $\Up_i \setminus \{u_{i,1}, \ldots, u_{i,j-1}\}$ that do not have an in-neighbour in $\{u_{i,0}, \ldots, u_{i,j-1}\}$, and take $u_{i,j}$ to be the vertex of maximum out-degree in $T[\Up_{i,j}]$; if $\Up_{i,j}$ is empty or if $j > \rho / 10$, set $m_i := j-1$ and define $\Sp(a_i) := \{u_{i, 0}, \ldots, u_{i, m_i}\}$.
			We have thus defined sets $\Sp(a)$ for $a \in \Ap$.

			Note that $|\Up_{i,1}| \le \dm(a_i)$ and $u_{i,j}$ has out-degree at least $(|\Up_{i, j}| - 1)/2$ in $U_{i, j}$, for $i \in [|\Ap|]$ and $j \in [m_i]$ (using that $T$ is a tournament). It follows that $|\Up_{i, j}| \le 2^{-(j-1)} \cdot \dm(a_i)$, implying that the number of vertices in $\Up_{i+1}$ that have no in-neighbours in $\Sp(a_i)$ is at most 
			\begin{equation} \label{eqn:bad-degree}
				\frac{\dm(a_i)}{2^{\rho/10}} 
				\le \max_{a \in \Ap} \frac{\dm(a)}{2^{\rho/10}} 
				\le \max_{a \in \Ap} \frac{\dm(a)}{10^{12} \cdot \sigma_1},
			\end{equation}
			using that $\rho = 10^4$, $\sigma_1 = 10^{60}$ and $2^{10} \ge 10^3$ (see \eqref{eqn:values}).
			Also observe that $\Sp(a)$ is a set of size at most $\rho / 10$ that induces a transitive tournament whose sink is $a$, for $a \in \Ap$. 

			We now pick sets $\Sm(a)$, with $a \in \Am$, similarly. 
			Let $a_1, \ldots, a_{|\Am|}$ be an ordering of the vertices in $\Am$. Having defined $\Sm(a_1), \ldots, \Sm(a_{i-1})$, define $\Um_i$ to be the set of unused vertices, namely
			\begin{equation*}
				\Um_i := V \setminus \left( \Big(\bigcup_{a \in \Ap} \Sp(a)\Big) \cup \Ap \cup \Am \cup \Sm(a_1) \cup \ldots \cup \Sm(a_{i-1})\right).
			\end{equation*}
			Let $v_{i,0}, \ldots, v_{i, m_i}$ be chosen as follows. Take $v_{i,0} := a_i$. Having defined $v_{i,0}, \ldots, v_{i, j-1}$, let $\Um_{i,j}$ be the set of vertices in $\Um_i \setminus \{v_{i,1}, \ldots, v_{i,j-1}\}$ which do not have an out-neighbour in $\{v_{i,0}, \ldots, v_{i,j-1}\}$. Take $v_{i, j}$ to be a vertex of maximum in-degree in $T[\Um_{i,j}]$. If $\Um_{i,j}$ is empty or $j > \rho/10$, define $m_i := j-1$ and put $\Sm(a_i) := \{v_{i,0}, \ldots, v_{i,m_i}\}$. 
			As above, the number of vertices in $\Um_{i+1}$ that have no out-neighbour in $\Sm(a_i)$ is at most the maximum of $\dpp(a) / 10^{12} \sigma_1$ over $a \in \Am$, and $\Sm(a)$ is a set of size at most $\rho / 10$ that induces a transitive tournament whose source is $a$, for $a \in \Am$. 

			Denote $S := \bigcup_{a \in \Ap} \Sp(a) \cup \bigcup_{a \in \Am} \Sm(a)$; then $|S| \le \rho \sigma_1 kt$.
			Because $T$ is $\tau_1 kt$-connected and $\tau_1 \ge \rho \sigma_1$ (see \eqref{eqn:values}), there is a collection $\PP$ of $\sigma_1 kt$ pairwise vertex-disjoint paths, each of which starts at the sink of $\Sm(a)$ for some $a \in \Am$ and ends at the source of $\Sp(a')$ for some $a' \in \Ap$, and which do not contain any vertices of $S \setminus (\Ap \cup \Am)$. We will assume that $\PP$ is \emph{minimal}, meaning that for every collection $\PP'$ of paths with the above properties, the number of vertices covered by paths in $\PP'$ is at least as large as the number of vertices covered by paths in $\PP$.

			To denote these paths and the corresponding pairing of sets $\Sp(a)$ with sets $\Sm(a')$, let $\A_1$ be a set of size $\sigma_1 kt$, which will serve as the set of indices, and for each $\alpha \in \A_1$, let $P(\alpha)$ be one of the paths above, let $\smm(\alpha)$ be the start vertex of $P(\alpha)$ and let $\sp(\alpha)$ be the last vertex in $P(\alpha)$. Let $\ap(\alpha) \in \Ap$ and $\am(\alpha) \in \Am$ be such that $\smm(\alpha) \in \Sm(\am(\alpha))$ and $\sp(\alpha) \in \Sp(\ap(\alpha))$.
			We abuse notation slightly by denoting $\Sp(\alpha) := \Sp(\ap(\alpha))$ and $\Sm(\alpha) := \Sm(\am(\alpha))$. 

			Define $U(\alpha) := \Sp(\alpha) \cup \Sm(\alpha) \cup V(P(\alpha))$, and take $S(\alpha)$ to be the union of $\Sp(\alpha) \cup \Sm(\alpha)$ with the first three and last three vertices in $P(\alpha)$, or with the whole of $V(P(\alpha))$ if it has at most five vertices (see \Cref{fig:gadget} for an illustration of these sets and vertices $\sp(\alpha), \smm(\alpha)$).
			For a subset $\A \subseteq \A_1$, write $U(\A) := \bigcup_{\alpha \in \A} U(\alpha)$. 

			\begin{figure}[ht]
				\begin{centering}
					\includegraphics[scale = 1]{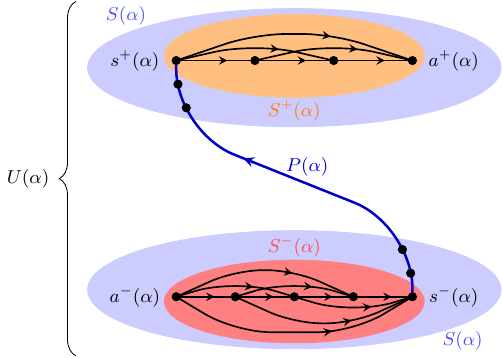}
					\caption{The sets $U(\alpha), \Sp(\alpha), \Sm(\alpha)$, vertices $\sp(\alpha), \smm(\alpha)$ and path $P(\alpha)$}
					\label{fig:gadget}
				\end{centering}
			\end{figure}

			Take $X(\alpha)$ to be the set of vertices $u$ such that $u$ is an in-neighbour of $\sp(\alpha)$ but an out-neighbour of some vertex in $U(\alpha) \sm S(\alpha)$ or $u$ is an out-neighbour of $\smm(\alpha)$ but an in-neighbour of some vertex in $U(\alpha) \sm S(\alpha)$.

			\begin{claim} \label{claim:nbds-paths}
				Let $\alpha \in \A_1$. Then all but at most $\rho\sigma_1 kt/2$ out-neighbours of $\smm(\alpha)$ are out-neighbours of all vertices in $U(\alpha) \setminus S(\alpha)$. Similarly, all but at most $\rho \sigma_1 kt/2$ in-neighbours of $\sp(\alpha)$ are in-neighbours of all vertices in $U(\alpha) \setminus S(\alpha)$.   
				In particular, $|X(\alpha)| \le \rho \sigma_1 kt$.
			\end{claim}
			
			\begin{proof}
				Write $s = \smm(\alpha)$, $P = P(\alpha)$ and let $u \in U(\alpha) \setminus S(\alpha)$. Then $u$ is a vertex in $P(\alpha)$ which is not one of the first three vertices.

				First note that all edges both of whose ends are in $V(P)$, and which are not edges of $P$, are directed `backwards', namely if $P = (x_1 \ldots x_t)$ and if $1 \le i < j - 1 < t$, then $x_j x_i$ is a directed edge in $T$. This is due to the minimality assumption on $\PP$; if instead $x_i x_j$ is an edge, then $P$ can be replaced by $(x_1 \ldots x_i x_j \ldots x_t)$, contradicting minimality (see the leftmost part of \Cref{fig:P}). In particular, $s$ has no out-neighbours in $V(P)$ other than the second vertex in $P$. 

				\begin{figure}[ht]
					\centering
					\includegraphics[scale = 1]{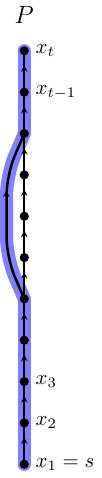}
					\hspace{2.2cm}
					\includegraphics[scale = 1]{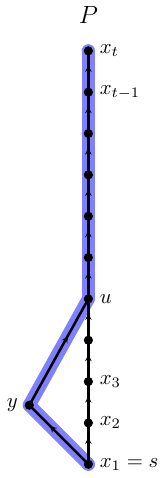}
					\hspace{2.2cm}
					\includegraphics[scale = 1]{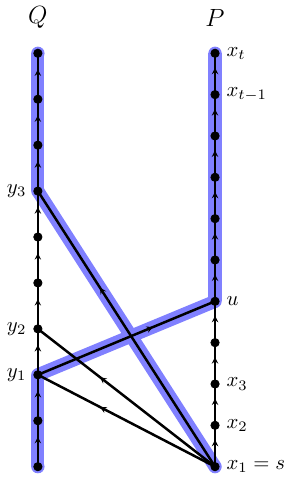}
					\caption{Contradiction to minimality in the proof of \Cref{claim:nbds-paths}}
					\label{fig:P}
				\end{figure}

				It is easy to see that every out-neighbour of $s$ which is not in $U(\A_1)$ is also an out-neighbour of $u$. Indeed, suppose to the contrary that $y$ is an out-neighbour of $s$ which is an in-neighbour of $u$. Then $P$ can be replaced by $syu P_{u \to}$ (where $P_{u \to}$ is the subpath of $P$ that starts at $u$ and follows $P$ to the end), contradicting the minimality of $\PP$ (see the middle part of \Cref{fig:P}).

				Finally, consider $\beta \in \A_1 \setminus \{\alpha\}$, and write $Q = P(\beta)$. We claim that all but the last two out-neighbours of $s$ in $Q$ are out-neighbours of $u$. Indeed, otherwise there are three out-neighbours $y_1, y_2, y_3$ of $s$ in $Q$, that appear in $Q$ in this order, such that $y_1$ is an in-neighbour of $u$. 
				Replace the paths $P$ and $Q$ by the following two path: $Q_{\to y_1} y_1 u P_{u \to}$ and $s y_3 Q_{y_3 \to}$ (where $Q_{\to y}$ is the subpath of $Q$ that starts as in $Q$ and ends at $y$, etc.).
				The vertices of these new paths are in $V(P) \cup V(Q)$, but avoid $y_2$, contradicting the minimality of $\PP$ (see the rightmost figure in \Cref{fig:P}).

				To summarise, the number of out-neighbours of $s$ that are not out-neighbours of some vertex in $U(\alpha) \setminus S(\alpha)$ is at most
				\begin{equation*}
					1 + 2(|\A_1| - 1) + \Big|\bigcup_{\beta \in \A_1} \Sp(\beta)\Big| + \Big|\bigcup_{\beta \in \A_1} \Sm(\beta))\Big| \le (2 + 2\rho / 10)\sigma_1 kt
					\le \rho \sigma_1 kt/2,
				\end{equation*}
				as claimed. An analogous argument can be used to prove the second part of the observation. 
			\end{proof}

			To complete the proof it now suffices to verify that properties \ref{itm:gadget-disjoint} to \ref{itm:gadget-good} hold. Item \ref{itm:gadget-disjoint} follows directly from the choice of sets $U(\alpha)$. It is easy to see that the first part of \ref{itm:gadget-Y-small} holds; indeed, $|S(\alpha)| \le |\Sp(\alpha)| + |\Sm(\alpha)| + 6 \le 2\rho / 10 + 6 \le \rho$; the second part follows from the last observation.
			To see that \ref{itm:gadget-path} holds, let $u \in \Sm(\alpha)$ and $v \in \Sp(\alpha)$. Then $u \smm(\alpha) P(\alpha) \sp(\alpha) v$ is a path from $u$ to $v$ in $T[U(\alpha)]$. 
			Next, notice that \ref{itm:gadget-neighbourhood} holds by definition of $X(\alpha)$.

			It remains to prove \ref{itm:gadget-good}. We form an auxiliary bipartite graph $H$, with parts $\A_1$ and $W := V \setminus \bigcup_{\alpha} S(\alpha)$, where $\alpha w$ (with $\alpha \in \A_1$ and $w \in W$) is an edge if $w$ has no in-neighbours in $\Sp(\alpha)$. Write $\Dm := \max_{a \in \Ap} \dm(a)$.
			By \eqref{eqn:bad-degree}, $d_H(\alpha) \le \frac{\Dm}{10^{12}\sigma_1}$.
			It follows that $e(H) \le |\A_1| \cdot \frac{\Dm}{10^{12}\sigma_1} \le \frac{kt\Dm}{10^{12}}$. Recall that a vertex $w$ is in $\Vbm$ if $w \in W$ and $d_H(w) \ge kt$, implying that $|\Vbm| \le e(H) / kt \le \frac{\Dm}{10^{12}}$. As every vertex $w$ in $W$ satisfies $\dm(w) \ge \Dm$ (by choice of $\Dm$ and $\Ap$), we have $\dm(w) \ge 10^{12}|\Vbm|$. A similar argument shows that $\dpp(w) \ge 10^{12}|\Vbp|$, establishing \ref{itm:gadget-good}.
		\end{proof}

	Let $S(\alpha), \Sp(\alpha), \Sm(\alpha), \sp(\alpha), \smm(\alpha), U(\alpha)$, with $\alpha \in \A_1$, be as in \Cref{prop:gadgets}. For a subset $\A \subseteq \A_1$, define
	\begin{equation*}
		S(\A) := \bigcup_{\alpha \in \A} S(\alpha), \qquad U(\A) := \bigcup_{\alpha \in \A} U(\alpha), \qquad W(\A) := V \setminus U(\alpha).
	\end{equation*}
	Let $\Vo$, $\Vbp$ and $\Vbm$ be defined as in \Cref{prop:gadgets}, namely,
	\begin{align*}
		& \Vo = V \setminus \bigcup_{\alpha \in \A_1} S(\alpha) \\
		& \Vbp = \{u \in \Vo : \text{$u$ has no out-neighbours in $\Sm(\alpha)$ for at least $kt$ indices $\alpha \in \A_1$}\} \\
		& \Vbm = \{u \in \Vo : \text{$u$ has no in-neighbours in $\Sp(\alpha)$ for at least $kt$ indices $\alpha \in \A_1$}\}.
	\end{align*}
	Define also 
	\begin{equation*}
		\Vb := \Vbp \cap \Vbm, \qquad \Vgp := \Vo \setminus \Vbp, \qquad \Vgm := \Vo \setminus \Vbm, \qquad \Vg := \Vgp \cap \Vgm.
	\end{equation*}

	Without loss of generality, we assume that $|\Vbm| \ge |\Vbp|$. The following claim establishes additional properties of the sets defined above. (Note the difference between \ref{itm:gadget-bad-out} and \ref{itm:gadget-bad-in}, which is due to the assumption $|\Vbm| \ge |\Vbp|$.)

	\begin{claim} \label{claim:additional-gadget-properties}
		The following properties holds.
		\begin{enumerate}[label = \rm(G\arabic*)]
			\setcounter{enumi}{5}
			\item \label{itm:gadget-size-V-good}
				$|\Vg| \ge n/2$.
			\item \label{itm:gadget-bad-out}
				Every vertex in $\Vo$ has at least $\max\{10^{11} |\Vbp|,\, \tau_1 kt/2\}$ out-neighbours in $\Vgp$.
			\item \label{itm:gadget-bad-in}
				Every vertex in $\Vo$ has at least $\max\{10^{11} |\Vbm|,\, \tau_1 kt/2\}$ in-neighbours in $\Vg$.
			\item \label{itm:gadget-not-okay}
				Every vertex in $V$ has at least $\max\{10^{11} |V \setminus \Vo|,\, \tau_1kt /2\}$ out- and in-neighbours in $\Vo$.
		\end{enumerate}
	\end{claim}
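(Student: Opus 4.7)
The plan is to prove all four items using a small set of common ingredients: (a) strong $\tau_1 kt$-connectivity gives every vertex in- and out-degree at least $\tau_1 kt$; (b) summing \ref{itm:gadget-Y-small} over $\alpha \in \A_1$ yields $|V \setminus \Vo| \le \rho \sigma_1 kt$; (c) property \ref{itm:gadget-good} gives $\dpp(u), \dm(u) \ge 10^{12}\max\{|\Vbp|, |\Vbm|\}$ for every $u \in \Vo$; and (d) the hierarchy $\tau_1 \gg \rho \sigma_1$ makes $\rho \sigma_1 kt$ negligible compared to $\tau_1 kt$.

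For \ref{itm:gadget-size-V-good}, first observe that since $\Vo$ is nonempty (as $n \ge \tau_1 kt + 1 \gg \rho \sigma_1 kt \ge |V \setminus \Vo|$), picking any $u \in \Vo$ and applying \ref{itm:gadget-good} together with $\dpp(u), \dm(u) \le n$ yields $|\Vbp|, |\Vbm| \le n/10^{12}$. Then $|\Vg| \ge |\Vo| - |\Vbp| - |\Vbm| \ge n - \rho \sigma_1 kt - 2n/10^{12} \ge n/2$, using $n \gg \rho \sigma_1 kt$.

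For \ref{itm:gadget-bad-out}, fix $u \in \Vo$ and note that the out-neighbours of $u$ outside $\Vgp$ lie either in $\Vbp$ (contributing at most $|\Vbp|$) or in $V \setminus \Vo$ (at most $\rho \sigma_1 kt$). I would split into two cases. If $|\Vbp| \le \rho \sigma_1 kt$, then $10^{11}|\Vbp| \le 10^{11}\rho\sigma_1 kt \le \tau_1 kt / 2$ (by $\tau_1 \gg 10^{11}\rho \sigma_1$), so it suffices to produce $\tau_1 kt/2$ out-neighbours in $\Vgp$; using $\dpp(u) \ge \tau_1 kt$, we get at least $\tau_1 kt - |\Vbp| - \rho \sigma_1 kt \ge \tau_1 kt - 2\rho\sigma_1 kt \ge \tau_1 kt / 2$. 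In the complementary case $|\Vbp| > \rho \sigma_1 kt$, using $\dpp(u) \ge 10^{12}|\Vbp|$ from \ref{itm:gadget-good}, the number of out-neighbours in $\Vgp$ is at least $10^{12}|\Vbp| - |\Vbp| - \rho\sigma_1 kt \ge (10^{12}-2)|\Vbp| \ge 10^{11}|\Vbp|$, which also dominates $\tau_1 kt /2$ since $|\Vbp| > \rho\sigma_1 kt$ gives $10^{11}|\Vbp| > 10^{11}\rho \sigma_1 kt \ge \tau_1 kt$ — wait, here instead I would just bound the second term as above to confirm both halves of the max. The proof of \ref{itm:gadget-bad-in} is entirely analogous, with $\dm$ replacing $\dpp$ and $\Vbm$, $\Vgm$ replacing $\Vbp$, $\Vgp$ (and one then intersects with $\Vgp$ using $|\Vbp| \le |\Vbm|$ and the same bookkeeping to reach $\Vg = \Vgp \cap \Vgm$).

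For \ref{itm:gadget-not-okay}, the argument is simpler: for any $v \in V$, at most $|V \setminus \Vo| \le \rho\sigma_1 kt$ of its out-neighbours avoid $\Vo$, so the number of out-neighbours in $\Vo$ is at least $\dpp(v) - \rho \sigma_1 kt \ge \tau_1 kt - \rho\sigma_1 kt \ge \tau_1 kt/2$, and this in turn is at least $10^{11} \cdot \rho \sigma_1 kt \ge 10^{11}|V \setminus \Vo|$ by $\tau_1 \gg 10^{11}\rho \sigma_1$; the same argument applies to in-neighbours. I do not expect any step to present a real obstacle — the only mildly delicate point is making sure that the $10^{11}$ factor in \ref{itm:gadget-bad-out} and \ref{itm:gadget-bad-in} survives after subtracting the $|V\setminus\Vo|$ error term, which is handled by the case split above.
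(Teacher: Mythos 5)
Your proposal is correct and follows essentially the same route as the paper: lower-bound degrees via \ref{itm:gadget-good} and the $\tau_1 kt$-connectivity, bound $|V \setminus \Vo| \le \rho\sigma_1 kt$ via \ref{itm:gadget-Y-small}, and subtract the exceptional sets (using $|\Vbp| \le |\Vbm|$ to pass from $\Vgm$ to $\Vg$ in \ref{itm:gadget-bad-in}). The only cosmetic difference is your two-case split on whether $|\Vbp| \le \rho\sigma_1 kt$, where the paper instead uses the single bound $\rho\sigma_1 kt + |\Vbp| \le \frac{1}{2}\max\{10^{12}|\Vbp|, \tau_1 kt\}$; your self-correction in the second case is the right fix.
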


	\begin{proof}
		To see \ref{itm:gadget-size-V-good}, note that \ref{itm:gadget-neighbourhood} implies that $|\Vbp|, |\Vbm| \le n/10^{12}$. Additionally, $|V \setminus \Vo| \le \rho \sigma_1 kt \le \tau_1 kt / 10 \le n/10$ (using \ref{itm:gadget-Y-small}, $\tau_1 \gg \rho, \sigma_1$ and that $T$ has minimum out- and in-degree at least $\tau_1 kt$.) Altogether we have $|V \setminus \Vg| = |\Vb| + |V \setminus \Vo| \le |\Vbp| + |\Vbm| + |V \setminus \Vo| \le n/2$, with room to spare. It follows that $|\Vg| \ge n/2$, as claimed.

		Note that by \ref{itm:gadget-good} and because $T$ has minimum out-degree at least $\tau_1 kt$, every vertex in $\Vo$ has at least $\max\{10^{12} |\Vbp|, \tau_1 kt\}$ out-neighbours.
		Note also that $|V \setminus \Vgp| \le |S(\A_1)| + |\Vbp| \le \rho \sigma_1 kt + |\Vbp| \le \frac{1}{2}\max\{10^{12}|\Vbp|, \tau_1 kt\}$ (using $\tau_1 \gg \rho, \sigma_1$). Property \ref{itm:gadget-bad-out} follows.

		A similar argument implies \ref{itm:gadget-bad-in}. Indeed, by \ref{itm:gadget-good} and the minimum degree assumption on $T$, every vertex in $\Vo$ has in-degree at least $\max\{10^{12}|\Vbm|, \tau_1 kt\}$. Thus $|V \setminus \Vg| \le |S(\A_1)| + |\Vbp| + |\Vbm| \le \rho \sigma_1 kt + 2|\Vbm| \le \frac{1}{2} \max\{10^{12}|\Vbm|, \tau_1 kt\}$, using $|\Vbm| \ge |\Vbp|$, and \ref{itm:gadget-bad-in} follows.

		Finally, if $u \in V$ and $\nu \in \{+, -\}$ then $|N^{\nu}(u) \cap \Vo| \ge \tau_1 kt - \sigma_1 \rho kt \ge \max\{10^{11}|V \setminus \Vo|, \, \tau_1 kt/2\}$ (using $V \setminus \Vo = S(\A_1)$), proving \ref{itm:gadget-not-okay}.
	\end{proof}

	\subsection{Many available neighbours} \label{subsec:available}

		Our aim in the next few steps is to form $t$ pairwise disjoint sets, each consisting of $10k$ gadgets $U(\alpha)$ as well as some additional vertices, and each inducing a $k$-connected set. The simplest way to form such a set is to let $U$ be a union of $10k$ gadgets, let $W$ be a set of vertices which have out- and in-neighbours in all but at most $k$ gadgets in $U$, such that each vertex in $U$ has at least $k$ out- and in-neighbours in $W$. In practice, this requirement on $W$ is a bit too strong, e.g.\ because of the existence of vertices that have out-neighbours in few of the sets $\Sm(\alpha)$. In this subsection we trim the collection of gadgets so that each vertex in each remaining gadget $U(\alpha)$ has many out- and in-neighbours that are good candidates for being in such 
		a set $W$ for a set $U$ as above that contains $U(\alpha)$. Below, we give a formal definition for this notion and then state \Cref{prop:available} which formalises this assertion.

		Given a subset $\A \subseteq \A_1$, an element $\alpha \in \A$ and a vertex $u$, we say that $u$ is \emph{available} for $\alpha$ with respect to $\A$ if one of the following holds, where $W := W(\A \setminus \{\alpha\})$; namely $W = \big(V \setminus \bigcup_{\beta \in \A} U(\beta)\big) \cup U(\alpha)$.
		
		\begin{enumerate}[label = \rm(A\arabic*)]
			\item \label{itm:available-good}
				$u \in \Vg \cap W$.
			\item \label{itm:available-out-good}
				$u \in (\Vgp \setminus \Vgm) \cap W$ and $u$ has at least $\tau_2 kt$ in-neighbours as in \ref{itm:available-good}.
			\item \label{itm:available-in-good}
				$u \in (\Vgm \setminus \Vgp) \cap W$ and $u$ has at least $\tau_2 kt$ out-neighbours as in \ref{itm:available-good} or \ref{itm:available-out-good}.
			\item \label{itm:available-bad}
				$u \in \Vb \cap W$ and $u$ has at least $\tau_2 kt$ out-neighbours as in \ref{itm:available-good} or \ref{itm:available-out-good}, and at least $\tau_2 kt$ in-neighbours as in \ref{itm:available-good}.
		\end{enumerate}

		Our goal in this subsection is to prove the following proposition.

		\begin{proposition} \label{prop:available}
			There is a subset $\A_2 \subseteq \A_1$ of size $\sigma_2 kt$ such that $s$ has at least $\tau_2 kt$ out- and in-neighbours that are available for $\alpha$ with respect to $\A_2$, for every $\alpha \in \A_2$ and $s \in S(\alpha)$.
		\end{proposition}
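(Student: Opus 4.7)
The plan is to pick $\A_2$ by random selection plus a repair step. Let $p := 2\sigma_2/\sigma_1$ and form $\A_2^{(0)} \subseteq \A_1$ by including each element independently with probability $p$; call $\alpha \in \A_2^{(0)}$ \emph{bad} if some $s \in S(\alpha)$ has fewer than $\tau_2 kt$ out- or in-neighbours that are available for $\alpha$ with respect to $\A_2^{(0)}$. I aim to show $\Ex[|\mathrm{bad}|] \ll \sigma_2 kt$, so that with positive probability both $|\A_2^{(0)}| \ge \sigma_2 kt$ (Chernoff) and $|\mathrm{bad}|$ is negligible (Markov). I then take $\A_2 \subseteq \A_2^{(0)} \setminus \mathrm{bad}$, trimmed down to size exactly $\sigma_2 kt$. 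Such trimming preserves the availability property because removing indices from the set only enlarges each $W_\beta$ for surviving $\beta$'s, so a vertex that was previously available remains available.

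The bound $\Ex[|\mathrm{bad}|] \le p \cdot |\A_1| \cdot \max_\alpha \Pr[\alpha \text{ bad} \mid \alpha \in \A_2^{(0)}]$ reduces the task to bounding $\Pr[s \text{ fails}]$ for a fixed pair $(\alpha, s)$ with $\alpha \in \A_1$ and $s \in S(\alpha)$. Consider out-availability (in-availability is symmetric). By \ref{itm:gadget-not-okay}, $s$ has at least $\tau_1 kt/2$ out-neighbours in $\Vo$, which I partition into $N_g, N_A, N_B, N_C$ according to the categories $\Vg$, $\Vgp \setminus \Vgm$, $\Vgm \setminus \Vgp$, $\Vb$ --- the four categories targeted by the availability types \ref{itm:available-good}--\ref{itm:available-bad}. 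At least one part has size $\ge \tau_1 kt/8$; I handle each such ``dominant'' case separately and sum the resulting failure probabilities.

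In the base case $|N_g| \ge \tau_1 kt/8$: writing $n_\beta := |N_g \cap U(\beta)|$, the number of (A1)-available vertices in $N_g$ equals $|N_g \setminus U(\A_1)| + n_\alpha + \sum_{\beta \neq \alpha} n_\beta \mathbf{1}[\beta \notin \A_2^{(0)}]$. If all $n_\beta \le \tau_2 kt$, \Cref{prop:hoeffding} yields failure probability $\exp(-\Omega(\tau_1/\tau_2))$; otherwise some single $n_\beta$ exceeds $\tau_2 kt$ and alone suffices provided $\beta \notin \A_2^{(0)}$, giving failure probability at most $p$. Either way the failure probability is $O(p)$. For the $N_A, N_B, N_C$ cases I apply this base analysis recursively: for $u \in N_A$, (A2)-availability requires $u \in W_\alpha$ (fails with probability $\le p$) and $|N^-(u) \cap \Vg \cap W_\alpha| \ge \tau_2 kt$ (fails with probability $O(p)$ by the base case applied to $u$'s in-neighbourhood in $\Vg$, which has size $\ge \tau_1 kt/2$ by \ref{itm:gadget-bad-in}). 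Since each $u \in N_A$ is then (A2)-available with probability $\ge 1 - O(p)$, a single Markov application to $|N_A \setminus \text{(A2)-available}|$ (whose expectation is $\le O(p)|N_A|$, while the required deviation $|N_A| - \tau_2 kt$ is at least $|N_A|/2$) yields $|N_A \cap \text{(A2)-available}| \ge \tau_2 kt$ with probability $\ge 1 - O(p)$. The $N_B$ and $N_C$ cases unroll one further layer in the same spirit, using \ref{itm:gadget-bad-out}. Summing the four cases gives $\Pr[s \text{ fails}] \le O(p)$, whence $\Ex[|\mathrm{bad}|] \le |\A_1| \cdot p \cdot \rho \cdot O(p) = O(\rho \sigma_2^2 kt/\sigma_1) \ll \sigma_2 kt$, as $\sigma_1 \gg \rho\sigma_2$.

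The main delicate point is the ``single concentrated gadget'' sub-case: if most of the relevant neighbours (of $s$ or of a downstream $u$) lie in one gadget $U(\beta)$, \Cref{prop:hoeffding} does not apply, and the best guarantee one can give is $\Pr[\beta \notin \A_2^{(0)}] = 1 - p$. A direct union bound over the $|\A_1| \rho$ pairs $(\alpha, s)$ would then total $\Omega(\rho \sigma_2 kt)$, far above $1$. This is precisely why the repair step is crucial: the extra restriction ``$\alpha \in \A_2^{(0)}$'' contributes a second factor of $p$ in expectation, bringing the total safely below $\sigma_2 kt$.
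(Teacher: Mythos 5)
Your argument is correct, but it takes a genuinely different route from the paper's. The paper proceeds in three nested stages (\Cref{lem:available-step-1,lem:available-step-2,lem:available-step-3}), each built on a deterministic two-pass greedy construction (\Cref{claim:available-one} and its analogues): whenever some $\Nnuo(s)$ is concentrated in a surviving gadget $U(\beta)$, the greedy step guarantees a discarded gadget $U(\gamma)$ absorbing an equally large share, so the deterministic part of $N \cap W$ is already large; the spread-out case is then handled by a density-$1/2$ random subset and \Cref{prop:hoeffding}, giving an exponentially small failure probability per pair $(\alpha,s)$ so that a plain union bound (in fact \Cref{prop:markov}) suffices. You instead do everything in one pass with a sparse random selection at density $p = 2\sigma_2/\sigma_1$ followed by deletion: the concentrated case is not repaired greedily but simply costs probability $p$ (the offending gadget is unlikely to be selected), and the resulting $O(p)$ per-pair failure probability --- too large for a union bound over $|\A_1|\rho$ pairs --- is rescued by the extra factor $p$ from requiring $\alpha$ itself to be selected, yielding $\Ex[|\mathrm{bad}|] = O(\rho\sigma_2^2 kt/\sigma_1) \ll \sigma_2 kt$. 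Two points carry your argument and are worth making explicit: availability is monotone under shrinking $\A$ (every clause \ref{itm:available-good}--\ref{itm:available-bad} only references $W(\A\setminus\{\alpha\})$, which grows), so deletion and trimming are harmless; and your Markov steps over $N_A, N_B, N_C$ need no independence between the events ``$u$ is available'', only linearity of expectation, which is exactly what correlated concentration inside a single gadget would otherwise break. What each approach buys: yours is shorter and avoids the three-lemma cascade and the ordering-based greedy claims entirely, at the price of retaining only $O(\sigma_2 kt)$ of the gadgets and of needing the first-moment/deletion device; the paper's retains a $1/\mathrm{poly}(\rho)$ fraction of $\A_1$ and keeps every failure probability exponentially small, which makes the final selection purely a union bound but requires the more delicate sacrificial-gadget construction. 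Both rest on the same dichotomy between spread-out neighbourhoods (Hoeffding) and gadget-concentrated ones.
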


		We break down the proof of \Cref{prop:available} into three lemmas. Before stating them, we need some notation.
		Define $\phi_0, \phi_1, \phi_2, \phi_3$ as follows.
		\begin{equation*}
			\phi_0 := \frac{\tau_1}{4}, \qquad
			\phi_1 := \frac{\phi_0}{2^{6}\rho}, \qquad
			\phi_2 := \frac{\phi_1}{2^{14}\rho^2}, \qquad
			\phi_3 := \frac{\phi_2}{2^{14}\rho^2}. 
		\end{equation*}
		Note that, as $\tau_1 \gg \tau_2$, we have $\phi_3 \ge \tau_2$.
		For a vertex $u$, write 
		\begin{align*}
			\begin{array}{ll}
				\Npo(u) := \Np(u) \cap \Vo \qquad \qquad
				&\Nmo(u) := \Nm(u) \cap \Vo \vspace{3pt}\\
				\Npg(u) := \Np(u) \cap \Vgp \qquad \qquad
				&\Nmg(u) := \Nm(u) \cap \Vg
			\end{array}
		\end{align*}
		(Note the difference between $\Npg(u)$ and $\Nmg(u)$).
		Then $|\Nnuo(u)| \ge \tau_1 kt / 2$ for every $u \in V$ and $\nu \in \{+, -\}$, and $|\Nmu(u)| \ge \tau_1 kt / 2$ for every $u \in \Vo$ and $\mu \in \{+, -\}$, 
		by \ref{itm:gadget-bad-out}, \ref{itm:gadget-bad-in} and \ref{itm:gadget-not-okay}.

		\begin{lemma} \label{lem:available-step-1}
			There is a subset $\C_1 \subseteq \A_1$ of size at least $|\A_1|/36\rho^2$ such that $|\Nnuo(s) \cap W(\C_1 \setminus \{\alpha\})| \ge \phi_1 kt$ for every $\alpha \in \C_1$, $s \in S(\alpha)$ and $\nu \in \{+, -\}$.
		\end{lemma}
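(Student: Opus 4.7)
The plan is to pick $\B \subseteq \A_1$ randomly, with each $\alpha \in \A_1$ included independently with probability $p := 1/(6\rho)$, and then discard the bad elements. Call $\alpha \in \B$ \emph{good} if $|\Nnuo(s) \cap W(\B \setminus \{\alpha\})| \geq \phi_1 kt$ for every $s \in S(\alpha)$ and $\nu \in \{+,-\}$; otherwise call $\alpha$ \emph{bad}, and write $\B'$ for the set of good elements of $\B$. The key monotonicity observation is that if $\C_1 \subseteq \B$ and $\alpha \in \C_1 \cap \B'$, then the required inequality for $\C_1$ also holds, since $W(\C_1 \setminus \{\alpha\}) \supseteq W(\B \setminus \{\alpha\})$. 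So it suffices to show $\Ex[|\B'|] \geq |\A_1|/(36\rho^2)$, pick a realization achieving this, and set $\C_1 := \B'$.

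To bound the probability that a given $\alpha \in \B$ is bad, fix $s \in S(\alpha)$ and $\nu \in \{+,-\}$. Since the gadgets $U(\beta)$ are pairwise disjoint by \ref{itm:gadget-disjoint} and $\Nnuo(s) \subseteq \Vo$, each vertex of $\Nnuo(s)$ lies in at most one set $U(\beta) \setminus S(\beta)$. Setting $m_\beta := |\Nnuo(s) \cap (U(\beta) \setminus S(\beta))|$ and conditioning on $\alpha \in \B$, the ``lost'' count $L := \sum_{\beta \in \B \setminus \{\alpha\}} m_\beta$ satisfies $\Ex[L] \leq p \cdot |\Nnuo(s)|$, so Markov's inequality gives $\Pr[L > |\Nnuo(s)|/2] \leq 2p$. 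On the complementary event the surviving count is at least $|\Nnuo(s)|/2 \geq \tau_1 kt/4 = \phi_0 kt \geq \phi_1 kt$, using the bound $|\Nnuo(s)| \geq \tau_1 kt/2$ that follows from \ref{itm:gadget-not-okay}.

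A union bound over the at most $2\rho$ pairs $(s,\nu)$ with $s \in S(\alpha)$ yields $\Pr[\alpha \text{ bad} \mid \alpha \in \B] \leq 4p\rho = 2/3$ for the chosen $p$. Hence $\Ex[|\B'|] \geq p|\A_1|/3 = |\A_1|/(18\rho) \geq |\A_1|/(36\rho^2)$, completing the proof once a good realization is fixed.

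The argument is essentially routine; the only mild subtlety worth noting is that applying the Hoeffding-type bound in \Cref{prop:hoeffding} directly would require a uniform upper bound on each $m_\beta$ of order $\phi_1 kt$, which is not readily available because an individual gadget $U(\beta)$ may be very long (the gadgets are only known to be pairwise disjoint, not uniformly small). The two-phase ``random selection then delete bad'' strategy, combined with the crude Markov bound on the lost mass, sidesteps this issue entirely.
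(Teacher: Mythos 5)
Your proof is correct, and it takes a genuinely different — and simpler — route than the paper. The paper first runs a two-stage deterministic greedy argument (\Cref{claim:available-one}) to extract a set $\C_1'$ with a ``replacement'' property: if some surviving gadget $U(\beta)$ absorbs at least $\phi_1 kt$ vertices of $\Nnuo(s)$, then so does some discarded gadget, and hence $|\Nnuo(s)\cap W|$ is already large deterministically. It then takes a density-$1/2$ random subset of $\C_1'$ and applies the Hoeffding-type bound of \Cref{prop:hoeffding}; the greedy stage exists precisely to supply the cap $m_\beta \le \phi_1 kt$ in the remaining case, which is the obstacle you correctly identify. You sidestep all of this by sampling sparsely (each gadget kept with probability $1/6\rho$), bounding the expected ``lost mass'' $\sum_\beta m_\beta \le |\Nnuo(s)|$ via the disjointness in \ref{itm:gadget-disjoint}, and applying Markov; the monotonicity observation that $W(\C_1\setminus\{\alpha\}) \supseteq W(\B\setminus\{\alpha\})$ for $\C_1 \subseteq \B$ is what makes the subsequent deletion of bad elements legitimate, and your constants check out (indeed $|\A_1|/18\rho \ge |\A_1|/36\rho^2$). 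What the Markov route gives up is only the exponentially small per-element failure probability of the Hoeffding approach, which is irrelevant here since a constant fraction of $\A_1$ suffices; what the paper's heavier template buys is that it transfers more directly to the nested conditions of \Cref{lem:available-step-2,lem:available-step-3}, where the same greedy-plus-concentration machinery is reused.
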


		\begin{lemma} \label{lem:available-step-2}
			Let $\C_1$ be as in \Cref{lem:available-step-1}. Then there is a subset $\C_2 \subseteq \C_1$ of size at least $|\C_1| / 6^4\rho^4$ such that, for every $\alpha \in \C_2$, $s \in S(\alpha)$ and $\nu \in \{+, -\}$, the set $\Nnuo(s) \cap W(\C_2 \setminus \{\alpha\})$ contains at least $\phi_2 kt$ vertices $u$ satisfying $|\Nmu(u) \cap W(\C_2 \setminus \{\alpha\})| \ge \phi_2 kt$ for $\mu \in \{+, -\}$.
		\end{lemma}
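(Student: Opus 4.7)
The natural approach is to obtain $\C_2$ by random sub-sampling from $\C_1$: independently include each $\beta \in \C_1$ with some probability $p = \Theta(1/\rho^2)$, and then discard the ``bad'' $\alpha$'s. The key structural inputs are that \Cref{lem:available-step-1} already gives $|M| \ge \phi_1 kt$, where $M := \Nnuo(s) \cap W(\C_1 \setminus \{\alpha\})$, and that each relevant vertex $u \in M$ lies in $\Vo$, so $|\Nmu(u)| \ge \tau_1 kt/2 \gg \phi_2 kt$ by \ref{itm:gadget-bad-out}, \ref{itm:gadget-bad-in} and \ref{itm:gadget-not-okay}. Since $\C_2 \subseteq \C_1$ gives $W(\C_2 \setminus \{\alpha\}) \supseteq W(\C_1 \setminus \{\alpha\})$, sampling only \emph{adds} available neighbours, so what really has to be controlled is the overlap $|\Nmu(u) \cap U(\C_2 \setminus \{\alpha\})|$.

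The analysis will proceed by two successive applications of Markov's inequality. First, for each fixed $u \in M$ and $\mu \in \{+,-\}$, the disjointness of the sets $U(\beta)$ (from \ref{itm:gadget-disjoint}) gives
\begin{equation*}
\Ex\bigl[|\Nmu(u) \cap U(\C_2 \setminus \{\alpha\})|\bigr] \le p \sum_{\beta \in \C_1 \setminus \{\alpha\}} |\Nmu(u) \cap U(\beta)| \le p \cdot |\Nmu(u)|,
\end{equation*}
and since $|\Nmu(u)| \gg \phi_2 kt$, Markov yields $\Pr[|\Nmu(u) \cap W(\C_2 \setminus \{\alpha\})| < \phi_2 kt] \le 2p$. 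Taking a union bound over $\mu$, any fixed $u \in M$ is ``bad'' with probability at most $4p$, so the expected number of bad $u$'s in $M$ is at most $4p|M|$. A second Markov application, using $|M| \ge \phi_1 kt \gg \phi_2 kt$, shows that with probability at least $1 - O(p)$ at least $\phi_2 kt$ vertices of $M$ are good, i.e., the conclusion of the lemma holds for the triple $(\alpha, s, \nu)$. Since $|S(\alpha)| \le \rho$ by \ref{itm:gadget-Y-small}, union-bounding over the at most $2\rho$ such triples for a fixed $\alpha$ yields that the probability $\alpha$ fails for some triple is $O(\rho p)$.

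Letting $\C_2$ be the sub-collection of sampled $\alpha$'s that are good for all their triples, we obtain $\Ex[|\C_2|] \ge p|\C_1|(1 - O(\rho p))$. With $p = 1/(36\rho^2)$ (and $\rho$ sufficiently large), this expectation is at least $p|\C_1|/2 = |\C_1|/(72\rho^2) \ge |\C_1|/(6^4\rho^4)$; fixing an outcome attaining this expectation gives the required $\C_2$. The main subtlety to check is that Markov's inequality has enough slack at both levels simultaneously, so that the two union bounds (over $u \in M$ and over the $(s,\nu)$ pairs) do not swallow the gain of the sampling probability $p$; this is guaranteed by the substantial gaps $\phi_2 \ll \phi_1 \ll \tau_1$ built into the constants.
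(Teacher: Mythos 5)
Your argument is correct, but it takes a genuinely different route from the paper. The paper proves this lemma by two applications of an auxiliary lemma (\Cref{lem:available-step-2-modified}), each of which first runs a deterministic two-pass greedy extraction (\Cref{claim:available-three}) to produce an intermediate set $\D_2'$ with the property that any ``heavy'' overlap of $\Nmu(u)$ with a surviving gadget is witnessed by a discarded gadget, and then keeps a \emph{density-$1/2$} random subset, using Hoeffding (\Cref{prop:hoeffding}) to show each index survives with probability at least $1/2$; the greedy step is exactly what makes the summands bounded so that Hoeffding applies. You instead subsample at density $p=\Theta(1/\rho^2)$ and replace all concentration by two layers of Markov, exploiting the disjointness of the $U(\beta)$ (so $\sum_\beta|\Nmu(u)\cap U(\beta)|\le|\Nmu(u)|$) together with the slack $|\Nmu(u)|\ge\tau_1kt/2\gg\phi_2kt$ and $|M|\ge\phi_1kt\gg\phi_2kt$. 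This removes the need for the greedy pre-processing entirely: even if all of $\Nmu(u)$ sits inside a single $U(\beta)$, that $\beta$ is sampled only with probability $p$, which is already small enough. The two points one must check — that ``$\alpha$ good'' depends only on the indicators of $\C_1\setminus\{\alpha\}$ and is hence independent of ``$\alpha$ sampled'', and that goodness is monotone under further shrinking of $\C_2$ (both occurrences of $W(\C_2\setminus\{\alpha\})$ only grow) — are both implicitly or explicitly in place in your write-up. Your bound $|\C_2|\ge|\C_1|/(72\rho^2)$ is in fact stronger than the required $|\C_1|/6^4\rho^4$, and handling $\mu=+$ and $\mu=-$ by a single union bound is more direct than the paper's two successive applications. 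The trade-off is that the paper's machinery is built to be reused verbatim for the more deeply nested \Cref{lem:available-step-3}, whereas your first-moment argument would need to be re-layered there; but for the present lemma your approach is a clean and complete alternative.
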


		\begin{lemma} \label{lem:available-step-3}
			Let $\C_2$ be as in \Cref{lem:available-step-2}.
			Then there is a subset $\C_3 \subseteq \C_2$ of size at least $|\C_2|/6^4\rho^4$ such that for every $\alpha \in \C_3$, $s \in S(\alpha)$ and $\nu \in \{+, -\}$ the following holds: $\Nnuo(s) \cap W(\C_3 \setminus \{\alpha\})$ contains at least $\phi_3 kt$ vertices $u$ for which $\Nmu(u) \cap W(\C_3 \setminus \{\alpha\})$ contains at least $\phi_3 kt$ vertices $v$ such that $|\Nmg(v) \cap W(\C_3 \setminus \{\alpha\})| \ge \phi_3 kt$, for $\mu \in \{+, -\}$.
		\end{lemma}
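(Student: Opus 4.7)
The plan is to mimic \Cref{lem:available-step-1} and \Cref{lem:available-step-2} by performing one more round of random subsampling on $\C_2$. I will choose $\C_3' \subseteq \C_2$ by including each element independently with probability $p := 1/(64\rho)$, and call $\alpha \in \C_2$ \emph{good} if the conclusion of the lemma holds at $\alpha$ with $\C_3$ replaced by $\C_3'$; then set $\C_3 := \{\alpha \in \C_3' : \alpha \text{ good}\}$. Crucially, the events $\{\alpha \in \C_3'\}$ and $\{\alpha \text{ good}\}$ are independent, since the latter depends only on the inclusion choices for $\C_3' \setminus \{\alpha\}$.

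To bound $\Pr[\alpha \text{ not good}]$, I first apply \Cref{lem:available-step-2} to obtain, for each $\alpha \in \C_2$, $s \in S(\alpha)$ and $\nu \in \{+,-\}$, a set $U_{\alpha,s,\nu} \subseteq \Nnuo(s) \cap W(\C_2 \setminus \{\alpha\})$ of size at least $\phi_2 kt$, each of whose members $u$ satisfies $|\Nmu(u) \cap W(\C_2 \setminus \{\alpha\})| \ge \phi_2 kt$ for both $\mu \in \{+,-\}$; set $V_{\alpha,s,\nu,u,\mu} := \Nmu(u) \cap W(\C_2 \setminus \{\alpha\})$, which likewise has size at least $\phi_2 kt$. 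Every $v \in V_{\alpha,s,\nu,u,\mu}$ lies in $\Vgp \cup \Vg \subseteq \Vo$, so $|\Nmu(v)| \ge \tau_1 kt / 2$ by \ref{itm:gadget-bad-out} and \ref{itm:gadget-bad-in}. Writing $g_\beta(v) := |\Nmu(v) \cap U(\beta)|$, the disjointness of the $U(\beta)$ yields $\sum_{\beta \in \C_2 \setminus \{\alpha\}} g_\beta(v) \le |\Nmu(v)|$, and hence $\Ex\bigl[\sum_{\beta \in \C_3' \setminus \{\alpha\}} g_\beta(v)\bigr] \le p|\Nmu(v)|$. By Markov's inequality, this latter sum exceeds $|\Nmu(v)|/2$ with probability at most $2p$; since $|\Nmu(v) \cap W(\C_3' \setminus \{\alpha\})| = |\Nmu(v)| - \sum_{\beta \in \C_3' \setminus \{\alpha\}} g_\beta(v)$ and $|\Nmu(v)|/2 \ge \tau_1 kt/4 \ge \phi_3 kt$, the event ``$v$ bad'' $:= \{|\Nmu(v) \cap W(\C_3' \setminus \{\alpha\})| < \phi_3 kt\}$ also has probability at most $2p$.

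I then cascade Markov's inequality up the hierarchy. The expected number of bad $v$'s in $V_{\alpha,s,\nu,u,\mu}$ is at most $2p|V_{\alpha,s,\nu,u,\mu}|$, so at least half of them are bad with probability at most $4p$; a union bound over $\mu \in \{+,-\}$ gives $\Pr[u \text{ bad}] \le 8p$; a Markov bound on the number of bad $u$'s in $U_{\alpha,s,\nu}$ gives $\Pr[(\alpha,s,\nu) \text{ bad}] \le 16p$; and a final union bound over $s \in S(\alpha)$ (with $|S(\alpha)| \le \rho$) and $\nu \in \{+,-\}$ yields $\Pr[\alpha \text{ not good}] \le 32\rho p = 1/2$. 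Therefore $\Ex[|\C_3|] = p\sum_{\alpha \in \C_2} \Pr[\alpha \text{ good}] \ge p|\C_2|/2$, and some outcome achieves $|\C_3| \ge p|\C_2|/2 = |\C_2|/(128\rho) \ge |\C_2|/(6^4 \rho^4)$ (the last step using $\rho = 10^4$). Unwinding the ``good'' definition, each $\alpha \in \C_3$ then supplies, for every $(s, \nu)$, at least $|U_{\alpha,s,\nu}|/2 \ge \phi_3 kt$ suitable $u$'s, and each such $u$ supplies at least $|V_{\alpha,s,\nu,u,\mu}|/2 \ge \phi_3 kt$ suitable $v$'s for each $\mu$, as required.

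The main obstacle is that $g_\beta(v)$ can be as large as $|U(\beta)|$, which is unbounded in $kt$; this defeats any concentration bound requiring bounded increments (such as \Cref{prop:hoeffding}). The workaround is to rely exclusively on Markov's inequality, which needs only expectations and is insensitive to the worst-case size of individual summands. The five-level Markov cascade loses at most a factor of $\rho$ at the final union bound (because $|S(\alpha)| \le \rho$), which is easily absorbed by choosing $p = \Theta(1/\rho)$, leaving comfortable slack in the size bound.
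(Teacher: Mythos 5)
Your proof is correct, but it takes a genuinely different route from the paper's. The paper deduces \Cref{lem:available-step-3} from two applications of an auxiliary lemma (\Cref{lem:available-step-3-modified}) whose proof has three ingredients: a deterministic two-pass greedy construction of a subset $\D_2'$ with an ``escape'' property (if some $U(\beta)$ with $\beta$ inside the subset captures $\ge \theta_2 kt$ of a relevant neighbourhood, then so does some $U(\gamma)$ outside it), a density-$1/2$ random subsample, and a case analysis via functions $f_1, \ldots, f_4$ that invokes Hoeffding (\Cref{prop:hoeffding}) when the mass is spread over many gadgets and the escape property when it is concentrated on one. You dispense with all of this: by sampling at density $p = 1/(64\rho)$ instead of $1/2$, the expected mass of $\Nmg(v)$ swallowed by the sampled gadgets is only a $p$-fraction of $|\Nmg(v)|$, so Markov's inequality alone bounds each failure probability by $O(p)$, and the factor $2\rho$ lost in the final union bound over $(s,\nu)$ is absorbed by the choice of $p$; this is insensitive to how $|\Nmg(v) \cap U(\beta)|$ is distributed over $\beta$, which is precisely the difficulty the escape sets and Hoeffding are there to handle. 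Your argument also needs only one randomization round rather than two, the independence of $\{\alpha \in \C_3'\}$ and the goodness event is sound, the monotonicity used to pass from $W(\C_3' \setminus \{\alpha\})$ to $W(\C_3 \setminus \{\alpha\})$ is valid, and the resulting set is in fact larger than required ($|\C_2|/128\rho$ versus $|\C_2|/6^4\rho^4$). The trade-off is that the paper's heavier machinery is reused verbatim for \Cref{lem:available-step-1,lem:available-step-2}, whereas yours is tailored to this lemma. One notational slip: at the innermost level the lemma concerns $\Nmg(v) = \Nm(v) \cap \Vg$, not $\Nmu(v)$ (the index $\mu$ refers to the neighbourhood of $u$, not of $v$); since $v \in \Vo$ in either case, the bound $|\Nmg(v)| \ge \tau_1 kt/2$ from \ref{itm:gadget-bad-in} still applies and the argument is unaffected.
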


		\begin{proof}[Proof of \Cref{prop:available} using \Cref{lem:available-step-1,lem:available-step-2,lem:available-step-3}]
			Let $\C_1$, $\C_2$ and $\C_3$ be as in \Cref{lem:available-step-1}, \Cref{lem:available-step-2} and \Cref{lem:available-step-3}, respectively. Define $\A_2 := \C_3$.
			Then 
			\begin{equation*}
				|\A_2| 
				= |\C_3| 
				\ge \frac{|\A_1|}{36\rho^2 \cdot 6^4\rho^4 \cdot 6^4\rho^4} 
				\ge \frac{\sigma_1 kt}{6^{10} \rho^{10}}
				\ge \sigma_2 kt.
			\end{equation*}
			(using $\rho = \sigma_2 = 10^4$ and $\sigma_1 = 10^{60}$.)

			Fix $\alpha \in \A_2$ and $s \in S(\alpha)$ and write $W := W(\A_2 \setminus \{\alpha\})$.
			We claim that every vertex $u \in W$, such that $\Nmu(u) \cap W$ contains at least $\tau_2 kt$ vertices $v$ with $|\Nmg(v) \cap W| \ge \tau_2 kt$, for $\mu \in \{+, -\}$, is available for $\alpha$ with respect to $\A_2$. To see this, we need to show that one of \ref{itm:available-good} to \ref{itm:available-bad} holds for $u$. 
			\begin{itemize}
				\item
					If $u \in \Vg$ then \ref{itm:available-good} automatically holds. 
				\item
					If $u \in \Vgp \setminus \Vgm$ then, using $|\Nmg(u) \cap W| \ge \tau_2 kt$, we see that \ref{itm:available-out-good} holds. 
				\item
					Suppose that $u \in \Vgm \setminus \Vgp$. 
					Note that every vertex $v \in \Npg(u) \cap W$ with $|\Nmg(v) \cap W| \ge \tau_2 kt$ satisfies one of \ref{itm:available-good} or \ref{itm:available-out-good}. Then, since $u$ has at least $\tau_2 kt$ such out-neighbours $v$, it satisfies \ref{itm:available-in-good}.
				\item
					Suppose now $u \in \Vb$.
					The reasoning from the previous item shows that $u$ has at least $\tau_2 kt$ out-neighbours satisfying \ref{itm:available-good} or \ref{itm:available-out-good}, and the reasoning from the second item show that $u$ has at least $\tau_2 kt$ in-neighbours satisfying \ref{itm:available-good}. In particular, \ref{itm:available-bad} holds.
			\end{itemize}
			Recalling that $\phi_3 \ge \tau_2$, it follows from the choice of $\C_3$ that $s$ has at least $\tau_2 kt$ out- and in-neighbours that are available for $\alpha$ with respect to $\A_2$. We conclude that $\A_2$ satisfies the requirements of \Cref{prop:available}.
		\end{proof}

		We now prove \Cref{lem:available-step-1,lem:available-step-2,lem:available-step-3}.

		\subsubsection{Proof of \Cref{lem:available-step-1}}
			\begin{proof}

				In order to find an appropriate set $\C_1$, we will find subsets $\C_1''', \C_1'' \subseteq \C_1' \subseteq \C_0$ and use them to define $\C_1$. The set $\C_1'$ will be taken to satisfy the properties of the following claim.

				\begin{claim} \label{claim:available-one}
					There is a subset $\C_1' \subseteq \C_0$ of size at least $|\C_0|/9\rho^2$ such that for every $\alpha \in \C_1'$, if for some $s \in S(\alpha)$ and $\nu \in \{+, -\}$ there exists $\beta \in \C_1' \setminus \{\alpha\}$ with $|\Nnuo(s) \cap U(\beta)| \ge \phi_1 kt$, then there exists $\gamma \in \C_0 \setminus \C_1'$ with $|\Nnuo(s) \cap U(\gamma)| \ge \phi_1 kt$.
				\end{claim}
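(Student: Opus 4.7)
The plan is a random-partition argument. For each $\alpha \in \C_0$, $s \in S(\alpha)$, and $\nu \in \{+,-\}$, define
\[
B(s,\nu) := \{\beta \in \C_0 : |\Nnuo(s) \cap U(\beta)| \ge \phi_1 kt\},
\]
and call a triple $(\alpha, s, \nu)$ \emph{dangerous} if $B(s,\nu) \setminus \{\alpha\} \neq \emptyset$. By \ref{itm:gadget-Y-small}, each $\alpha$ has at most $2|S(\alpha)| \le 2\rho$ dangerous triples attached to it. I would then partition $\C_0$ uniformly at random into $r := 3\rho$ parts $Q_1, \ldots, Q_r$ of (approximately) equal size, write $Q(\alpha)$ for the part containing $\alpha$, and declare a dangerous triple \emph{bad} if $B(s,\nu) \setminus \{\alpha\} \subseteq Q(\alpha)$. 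Call an element $\alpha$ bad if it lies in some bad triple.

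For a dangerous triple, pick any $\beta \in B(s,\nu) \setminus \{\alpha\}$; since the partition is uniform, $\Pr[\beta \in Q(\alpha)] \le 1/r$, so the triple is bad with probability at most $1/r$. Summing over all $\alpha \in \C_0$ and over the at most $2\rho$ dangerous triples per $\alpha$, the expected number of bad triples is at most $|\C_0| \cdot 2\rho / r = 2|\C_0|/3$. Fix an outcome of the partition whose bad count is at most this expectation; then at most $2|\C_0|/3$ of the elements are bad, so at least $|\C_0|/3$ are good, and by averaging some part $Q_i$ contains at least $|\C_0|/(9\rho) \ge |\C_0|/(9\rho^2)$ good elements. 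I would take $\C_1'$ to be the set of good elements in this $Q_i$.

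To verify the property, observe $\C_1' \subseteq Q(\alpha)$ for every $\alpha \in \C_1'$. If some $\beta \in \C_1' \setminus \{\alpha\}$ satisfies $|\Nnuo(s) \cap U(\beta)| \ge \phi_1 kt$, then $\beta \in B(s,\nu) \setminus \{\alpha\}$, so the triple $(\alpha,s,\nu)$ is dangerous. Because $\alpha$ is good, the triple is not bad, so $B(s,\nu) \setminus \{\alpha\} \not\subseteq Q(\alpha)$; hence there exists $\gamma \in B(s,\nu)$ with $\gamma \notin Q(\alpha)$, and in particular $\gamma \in \C_0 \setminus \C_1'$ satisfies $|\Nnuo(s) \cap U(\gamma)| \ge \phi_1 kt$, as required.

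The main thing to get right is the choice of the bad event so that (i) its per-triple probability $1/r$ absorbs the factor $2\rho$ coming from $|S(\alpha)| \le \rho$, leaving a positive density of good elements; and (ii) the resulting $\C_1'$ lives entirely inside one part, which is what makes an ``escape'' of $B(s,\nu)$ from $Q(\alpha)$ automatically an escape from $\C_1'$. Using a random partition (rather than, say, a random subset) is what makes (ii) cost-free. Nothing else poses a real obstacle: the probability estimate is a one-line first-moment computation, and floor/ceiling issues in the partition are absorbed by the generous constants, as permitted by the convention stated in the introduction.
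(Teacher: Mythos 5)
Your argument is correct, and it reaches the stated conclusion by a genuinely different route from the paper. The paper's proof is a deterministic two-pass greedy: fix an ordering of $\C_0$, sweep forward building a partition $\{X_1,Y_1\}$ in which each newly selected $\alpha$ reserves, for every $(s,\nu)$ with a large intersection ahead of it, one witness $\beta$ that is moved to $Y_1$; then sweep backward over $X_1$ to build $\{X_2,Y_2\}$, and take $\C_1'=X_2$. The two passes are exactly what handles the two possible relative orders of $\alpha$ and $\beta$, and the reserved witnesses live in $Y_1\cup Y_2$, hence outside $X_2$. Your random-partition argument replaces this bookkeeping with a one-line first-moment computation: a dangerous triple $(\alpha,s,\nu)$ fails only if the whole witness set $B(s,\nu)\setminus\{\alpha\}$ is swallowed by $Q(\alpha)$, which happens with probability at most $1/(3\rho)$, and restricting $\C_1'$ to the good elements of a single part converts ``some witness escapes $Q(\alpha)$'' into ``some witness lies in $\C_0\setminus\C_1'$'' for free. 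The accounting checks out: at most $2\rho$ triples per $\alpha$ times $1/(3\rho)$ gives at most $2|\C_0|/3$ bad elements in expectation, and pigeonholing the $\ge|\C_0|/3$ good elements over $3\rho$ parts even gives $|\C_1'|\ge|\C_0|/9\rho$, slightly better than the claimed $|\C_0|/9\rho^2$. The trade-off is that the paper's construction is deterministic and explicit, while yours is shorter and transfers verbatim to the analogous selection claims later in the section (where the paper repeats the same greedy with the obvious modifications); since only existence of $\C_1'$ is needed, the use of randomness costs nothing.
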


				\begin{proof}
					Fix an arbitrary ordering $\prec$ of $\C_0$. 
					First, run the following process. Start with $X_1 = Y_1 = \emptyset$. As long as the set $\C_0 \setminus (X_1 \cup Y_1)$ is non-empty, take $\alpha$ to be the first element this set (according to $\prec$) and put it in $X_1$. Then, for each $s \in S(\alpha)$ and $\nu \in \{+, -\}$, if $|\Nnuo(s) \cap U(\beta)| \ge \phi_1 kt$ for some $\beta \in \C_0 \setminus (X_1 \cup Y_1 \cup \{\alpha\})$, put one such $\beta$ in $Y_1$.

					Next, we run a similar process on $X_1$. Start with $X_2 = Y_2 = \emptyset$. As long as the set $X_1 \setminus (X_2 \cup Y_2)$ is non-empty, let $\alpha$ be the \emph{last} element in this set (according to $\prec$) and put $\alpha$ in $X_2$. Then, for every $s \in S(\alpha)$ and $\nu \in \{+, -\}$, if $|\Nnuo(s) \cap U(\beta)| \ge \phi_1 kt$ for some $\beta \in X_1 \setminus (X_2 \cup Y_2 \cup \{\alpha\})$, put one such $\beta$ in $Y_2$.

					Now set $\C_1' := X_2$. We claim that this choice satisfies the requirements of the claim.
					To see this observe first that $|X_1| \ge |\C_0|/(2\max_{\alpha} |S(\alpha)| + 1) \ge |\C_0|/3\rho$, because $\{X_1, Y_1\}$ is a partition of $\C_0$ and for each element $\alpha$ which is put in $X_1$, at most $2|S(\alpha)|$ elements are put in $Y_1$. Similarly, $|X_2| \ge |X_1| / 3\rho \ge |\C_0|/9\rho^2$. 
					Next, suppose that for some $\alpha \in X_2$ there exist $s \in S(\alpha)$, $\nu \in \{+, -\}$ and $\beta \in X_2 \setminus \{\alpha\}$ such that $|\Nnuo(s) \cap U(\beta)| \ge \phi_1 kt$. If $\alpha \prec \beta$ then by choice of $X_1$ there exists $\gamma \in Y_1$ such that $|\Nnuo(s) \cap U(\gamma)| \ge \phi_1 kt$. Similarly, if $\beta \prec \alpha$ then there exists $\gamma \in Y_2$ such that $|\Nnuo(s) \cap U(\gamma)| \ge \phi_1 kt$.
				\end{proof}

				Take $\C_1''$ to be a subset of $\C_1'$, chosen uniformly at random, and let $\C_1'''$ be the set of elements $\alpha$ in $\C_1'$ such that $|\Nnuo(s) \cap W(\C_1'' \setminus \{\alpha\})| \ge \phi_1 kt$ for every $s \in S(\alpha)$ and $\nu \in \{+, -\}$.

				\begin{claim} \label{claim:available-two}
					$\Pr[\alpha \in \C_1'''] \ge 1/2$ for every $\alpha \in \C_1'$.
				\end{claim}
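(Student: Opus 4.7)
The plan is to fix $\alpha \in \C_1'$ and, for each of the at most $2|S(\alpha)| \le 2\rho$ pairs $(s,\nu)$ with $s \in S(\alpha)$ and $\nu \in \{+,-\}$ (using \ref{itm:gadget-Y-small}), bound the probability that $|\Nnuo(s) \cap W(\C_1'' \setminus \{\alpha\})| < \phi_1 kt$ by $\exp(-8\rho)$; a union bound then gives $\Pr[\alpha \notin \C_1'''] \le 2\rho \exp(-8\rho) \le 1/2$. Throughout, fix such a pair, write $N := \Nnuo(s)$ and $m_\beta := |N \cap U(\beta)|$ for $\beta \in \C_1'$, and note $|N| \ge \tau_1 kt/2 = 2\phi_0 kt$ by \ref{itm:gadget-not-okay}.

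The crux is a dichotomy on whether some $\beta \in \C_1' \setminus \{\alpha\}$ has $m_\beta \ge \phi_1 kt$. If so, \Cref{claim:available-one} supplies $\gamma \in \C_0 \setminus \C_1'$ with $|N \cap U(\gamma)| \ge \phi_1 kt$; since $\gamma \notin \C_1''$ and the gadgets $U(\delta)$ are pairwise disjoint by \ref{itm:gadget-disjoint}, one has $U(\gamma) \subseteq W(\C_1'' \setminus \{\alpha\})$, so the desired bound holds deterministically.

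Otherwise $m_\beta < \phi_1 kt$ for every $\beta \in \C_1' \setminus \{\alpha\}$. Let $Z_\beta$ be the indicator of $\beta \notin \C_1''$ (these are independent $\{0,1\}$-variables each with mean $1/2$, since $\C_1''$ is a uniformly random subset of $\C_1'$), and write $M := \sum_{\beta \in \C_1' \setminus \{\alpha\}} m_\beta$. Then
\begin{equation*}
    |N \cap W(\C_1'' \setminus \{\alpha\})| \ge (|N| - M) + \sum_{\beta \in \C_1' \setminus \{\alpha\}} m_\beta Z_\beta,
\end{equation*}
since every $\beta \in \C_1' \setminus (\C_1'' \cup \{\alpha\})$ contributes $U(\beta) \subseteq W(\C_1'' \setminus \{\alpha\})$. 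If $|N| - M \ge \phi_1 kt$, the deterministic term already suffices; otherwise $M \ge |N| - \phi_1 kt \ge \phi_0 kt$ (using $\phi_0 \gg \phi_1$), and \Cref{prop:hoeffding} applies with $\ell = kt$, $\eta_2 = \phi_1$, $\eta_1 = \phi_0$ (noting $\phi_0/\phi_1 = 2^6 \rho > 4$), yielding $\sum_\beta m_\beta Z_\beta \ge \phi_1 kt$ with probability at least $1 - \exp(-\phi_0/(8\phi_1)) = 1 - \exp(-8\rho)$.

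The main obstacle is the possibility of a few heavy gadgets, those with $m_\beta$ close to $|N|$, which would wreck a direct Hoeffding-style estimate because the per-variable range would be of the same order as the total mass; \Cref{claim:available-one} was engineered precisely to push any such heavy $\beta$ outside $\C_1'$, where its contribution to $|N \cap W(\C_1'' \setminus \{\alpha\})|$ is automatically preserved. Once this structural dichotomy is in place, the concentration estimate in the light case is routine, and the claim follows by the union bound sketched at the outset.
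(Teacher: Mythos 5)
Your proof is correct and follows essentially the same route as the paper: the union bound over the at most $2\rho$ pairs $(s,\nu)$, the use of \Cref{claim:available-one} to handle (equivalently, rule out) heavy gadgets, and the application of \Cref{prop:hoeffding} with $\eta_1=\phi_0$, $\eta_2=\phi_1$ in the remaining case are exactly the paper's argument, merely with the case split organised slightly differently (the paper first disposes of the case $|N\cap W(\C_1'\setminus\{\alpha\})|\ge\phi_1 kt$ and then deduces $m_\beta\le\phi_1 kt$ for all $\beta$, whereas you branch on the existence of a heavy $\beta$ first).
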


				Using \Cref{claim:available-two}, which we prove below, we find that each of the events $\{\alpha \in \C_1''\}$ and $\{\alpha \in \C_1'''\}$ occurs with probability at least $1/2$, for every $\alpha \in \C_1'$. Noting that these events are independent for $\alpha \in \C_1'$, it follows that $\Ex(|\C_1'' \cap \C_1'''|) \ge |\C_1'|/4 \ge |\C_0|/36\rho^2$. Take an instance where the intersection $\C_1'' \cap \C_1'''$ has size at least $|\C_0|/36\rho^2$, and set $\C_1$ to be this intersection. So $\C_1$ has the required size and the desired property that $|\Nnuo(s) \cap W(\C_1 \setminus \{\alpha\})| \ge \phi_1 kt$ for every $\alpha \in \C_1$, $s \in S(\alpha)$ and $\nu \in \{+, -\}$.
			\end{proof}

			\begin{proof}[Proof of \Cref{claim:available-two}]
				Fix $\alpha \in \C_1'$, $s \in S(\alpha)$ and $\nu \in \{+, -\}$. Write $\C' := \C_1' \setminus \{\alpha\}$, $W' := W(\C')$, $W'' := W(\C_1'' \setminus \{\alpha\})$ and $N := \Nnuo(s)$.
				Let $E_{s, \nu}$ be the event that $|N \cap W''| \ge \phi_1 kt$. We will show that $\Pr[E_{s, \nu}] \ge 1 - \exp(-\phi_0 / 8\phi_1)$. This will imply that 
				\begin{equation*}
					\Pr[\alpha \in \C_1'''] 
					= \Pr\left[\bigcap_{s, \nu} E_{s, \nu}\right] 
					\ge 1 - 2|S(\alpha)| \cdot \exp(-\phi_0 / 8 \phi_1) 
					\ge 1 - 2\rho \exp(-8\rho) 
					\ge 1/2, 
				\end{equation*}
				as claimed (using $\phi_0 / \phi_1 = 2^6\rho$).

				We now estimate $\Pr[E_{s, \nu}]$.
				Recall that $|N| \ge \tau_1 kt / 2$. If $|N \cap W'| \ge \phi_1 kt$, then $\Pr[E_{s, \nu}] = 1$ (using $W' \subseteq W''$) so suppose this is not the case. 
				We claim that $|N \cap U(\beta)| \le \phi_1 kt$ for every $\beta \in \C'$.
				Indeed, if $|N \cap U(\beta)| \ge \phi_1 kt$ for some $\beta \in \C'$, then by choice of $\C_1'$ there exists $\gamma \in C_0 \setminus \C_1'$ such that $|N \cap U(\gamma)| \ge \phi_1 kt$. In particular, $|N \cap W'| \ge \phi_1 kt$, a contradiction to the previous assumption. 

				For each $\beta \in \C'$ write $m_{\beta} := |N \cap U(\beta)|$ and let $X_{\beta}$ be a random variable which is $m_{\beta}$ when $\beta \notin \C_1''$ and $0$ otherwise. Set $X := \sum_{\beta} X_{\beta}$, so that $X = |N \cap (W'' \setminus W')|$. By the previous paragraph, $m_{\beta} \le \phi_1 kt$ for every $\beta \in \C'$ and $\sum_{\beta} m_{\beta} = |N \setminus W'| \ge \tau_1 kt/2 - \phi_1 kt \ge \tau_1 kt / 4 = \phi_0 kt$.
				Thus, by \Cref{prop:hoeffding}, $\Pr[E_{s, \nu}] \ge \Pr[X \ge \phi_1 kt] \ge 1 - \exp(-\phi_0 / 8\phi_1)$, as claimed.
			\end{proof}

		\subsubsection{Proof of \Cref{lem:available-step-2}}
			The proof of \Cref{lem:available-step-2} will follow from two applications of the following lemma.

			\def \Mnu {M^{\nu}}
			\def \D {\mathcal{D}}
			\begin{lemma} \label{lem:available-step-2-modified}
				Let $\D_1 \subseteq \C_1$, $\mu \in \{+, -\}$ and $\theta_1 \le \phi_1$, and write $\theta_2 = \theta_1 / 2^7 \rho$. For each $\alpha \in \D_1$, $s \in S(\alpha)$ and $\nu \in \{+, -\}$, let $\Mnu(s)$ be a subset of $\Vo$ of size at least $\theta_1 kt$. Then there is a subset $\D_2 \subseteq \D_1$ of size at least $|\D_1|/36\rho^2$ such that for every $\alpha \in \D_2$, $s \in S(\alpha)$ and $\nu \in \{+, -\}$ there are at least $\theta_2 kt$ vertices $u \in \Mnu(s)$ such that $|\Nmu(u) \cap W(\D_2 \sm \{\alpha\})| \ge \theta_2 kt$.
			\end{lemma}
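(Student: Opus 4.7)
The key observation is that the lemma requires only $\theta_2 kt$ good vertices in each $\Mnu(s)$ out of the $\ge \theta_1 kt$ present—a far weaker condition than the pointwise neighbourhood bound in Lemma~\ref{lem:available-step-1}. This slack lets us skip the deterministic pre-pruning (the analogue of Claim~\ref{claim:available-one}) entirely and rely on two applications of Markov's inequality together with an independence trick.

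Sample $\D_2^* \subseteq \D_1$ by including each element independently with probability $p := 1/(16\rho)$. Fix $\alpha \in \D_1$, $s \in S(\alpha)$, $\nu \in \{+,-\}$, and $u \in \Mnu(s)$; write $m_\beta := |\Nmu(u) \cap U(\beta)|$ for $\beta \in \D_1$. Since the gadgets are pairwise disjoint (by \ref{itm:gadget-disjoint}),
\begin{equation*}
|\Nmu(u) \cap W(\D_2^* \sm \{\alpha\})| = |\Nmu(u)| - \sum_{\beta \in \D_2^* \sm \{\alpha\}} m_\beta,
\end{equation*}
and $\Ex[\sum_{\beta \in \D_2^* \sm \{\alpha\}} m_\beta] \le p|\Nmu(u)|$. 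Since $u \in \Vo$, properties \ref{itm:gadget-bad-out} and \ref{itm:gadget-bad-in} from Claim~\ref{claim:additional-gadget-properties} give $|\Nmu(u)| \ge \tau_1 kt/2 \gg \theta_2 kt$, and so Markov's inequality yields
\begin{equation*}
\Pr\left[|\Nmu(u) \cap W(\D_2^* \sm \{\alpha\})| < \theta_2 kt\right] \le \frac{p|\Nmu(u)|}{|\Nmu(u)| - \theta_2 kt} \le 2p.
\end{equation*}
Call such a $u$ \emph{bad} for $\alpha$. The expected number of bad $u \in \Mnu(s)$ is at most $2p|\Mnu(s)|$, and a second Markov bound (using $|\Mnu(s)| \ge \theta_1 kt = 2^7\rho \cdot \theta_2 kt$, so $|\Mnu(s)| - \theta_2 kt \ge |\Mnu(s)|/2$) shows that the number of bad vertices exceeds $|\Mnu(s)| - \theta_2 kt$ with probability at most $4p$.

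Union-bounding over the $\le 2|S(\alpha)| \le 2\rho$ pairs $(s,\nu)$, each $\alpha \in \D_1$ \emph{succeeds}—i.e.\ has at least $\theta_2 kt$ non-bad vertices in every $\Mnu(s)$—with probability $\ge 1 - 8\rho p \ge 1/2$. Since ``$\alpha$ succeeds'' depends only on the coins for $\D_1 \sm \{\alpha\}$, it is independent of $\{\alpha \in \D_2^*\}$, so $\Ex[\#\{\alpha \in \D_2^* : \alpha \text{ succeeds}\}] \ge p|\D_1|/2 = |\D_1|/(32\rho) \ge |\D_1|/(36\rho^2)$. Fix a realisation achieving this bound and let $\D_2$ be the set of successful elements of $\D_2^*$. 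Trimming $\D_2^* \to \D_2$ only enlarges $W(\D_2 \sm \{\alpha\})$, so any $u$ that was good for $\alpha$ with respect to $\D_2^*$ remains good with respect to $\D_2$; hence the desired count condition is preserved. The main point to verify is the composition of the two Markov bounds, which hinges on the parameter choices $\theta_2 = \theta_1/(2^7\rho)$ and $u \in \Vo$ (guaranteeing $\theta_2 kt \ll |\Mnu(s)|, |\Nmu(u)|$); the crude nature of Markov is absorbed precisely because we demand only $\theta_2 kt$ good vertices, a tiny fraction of $|\Mnu(s)|$.
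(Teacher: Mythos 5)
Your proof is correct, and it takes a genuinely different route from the paper's. The paper first performs a deterministic two-pass pruning of $\D_1$ (an analogue of \Cref{claim:available-one}), whose purpose is to guarantee that for each surviving $\alpha$ either no single gadget $U(\beta)$ can absorb a $\theta_2 kt$-sized chunk of the relevant neighbourhoods (so that Hoeffding, \Cref{prop:hoeffding}, applies to the half-density sample) or the desired property holds deterministically; this forces a case split via an auxiliary map $f$ into $\D' \cup \{\infty\}$. You dispense with the pruning entirely by sampling at the much lower density $p = 1/(16\rho)$, at which point a plain first-moment bound is strong enough with no control whatsoever on the individual $m_\beta$: the two nested Markov applications go through precisely because only $\theta_2 kt$ good vertices are demanded out of $\theta_1 kt = 2^7\rho\,\theta_2 kt$, and because $\theta_2 kt \ll \lvert\Nmu(u)\rvert$ for $u \in \Vo$ by \ref{itm:gadget-bad-out}, \ref{itm:gadget-bad-in}. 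The independence of ``$\alpha$ succeeds'' (a function of the coins of $\D_1 \sm \{\alpha\}$) from ``$\alpha \in \D_2^*$'' is the same decoupling trick the paper uses for $\C_1''$ and $\C_1'''$, and the monotonicity observation that shrinking $\D_2^*$ to $\D_2$ only enlarges $W(\D_2 \sm \{\alpha\})$ is exactly what makes the trimming harmless. What each approach buys: the paper's Hoeffding route gives exponentially small failure probabilities per $\alpha$, which are not actually needed in this lemma (only probability $1/2$ is used), while your low-density sampling trades a smaller retention rate per element for a much simpler analysis — and since $1/(32\rho) \ge 1/(36\rho^2)$, your final bound on $\lvert\D_2\rvert$ even beats the stated one.
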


			The proof of \Cref{lem:available-step-2} follows easily from \Cref{lem:available-step-2-modified}. Indeed, first apply the latter lemma to $\C_1$ with $\mu = +$ and $\theta_1 = \phi_1$, where, for $\alpha \in \C_1$, we define $\Mnu(s) := \Nnuo(s) \cap W(\C_1 \setminus \{\alpha\})$; denote the resulting set $\C_2'$. Now apply the lemma again to $\C_2'$ with $\mu = -$ and $\theta_1 = \phi_1 / 2^7 \rho$, where for $\alpha \in \C_2'$ we take $\Mnu(s)$ to be the set of vertices $u$ in $\Nnuo(s) \cap W(\C_1 \setminus \{\alpha\})$ for which $|\Npg(u) \cap W(\C_2' \sm \{\alpha\})| \ge \theta_2 kt$ (note that $|\Mnu(s)| \ge \theta_1 kt$ for every $s \in S(\alpha)$ and $\nu \in \{+, -\}$ by the choice of $\C_2'$); take $\C_2$ to be the set resulting of the latter application. Then $|\C_2| \ge |\C_1| / 36^2 \rho^4$ and for every $\alpha \in \C_2$, $s \in S(\alpha)$ and $\nu \in \{+, -\}$, the set $\Nnuo(s) \cap W(\C_1 \setminus \{\alpha\})$ contains at least $(\phi_1 / 2^{14} \rho^2) kt = \phi_2 kt$ vertices $u$ satisfying $|\Nmu(u) \cap W(\C_2 \sm \{\alpha\})| \ge \phi_2 kt$ for $\mu \in \{+, -\}$, as required for \Cref{lem:available-step-2}.

			\begin{proof}[Proof of \Cref{lem:available-step-2-modified}]
				We proceed similarly to the proof of \Cref{lem:available-step-1}, first picking a subset $\D_2' \subseteq \D_1$ as in the following claim.
				\begin{claim} \label{claim:available-three}
					There is a subset $\D_2' \subseteq \D_1$ of size at least $|\D_1|/9\rho^2$ such that for every $\alpha \in \D_2'$, $s \in S(\alpha)$ and $\nu \in \{+, -\}$, if there exists $\beta \in \D_2' \setminus \{\alpha\}$ for which $\Mnu(s)$ contains at least $\theta_2 kt$ vertices $u$ with $|\Nmu(u) \cap U(\beta)| \ge \theta_2 kt$, then there exists $\gamma \in \D_1 \setminus \D_2'$ with the same property.
				\end{claim}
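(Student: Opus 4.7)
The statement of this claim is structurally identical to \Cref{claim:available-one}: the role previously played by the pairs $(s,\nu)$ together with the event $\{|\Nnuo(s) \cap U(\beta)| \ge \phi_1 kt\}$ is now played by the pairs $(s,\nu)$ together with the event ``$\Mnu(s)$ contains at least $\theta_2 kt$ vertices $u$ with $|\Nmu(u) \cap U(\beta)| \ge \theta_2 kt$''. So I plan to mimic, almost verbatim, the two-stage greedy selection used in the proof of \Cref{claim:available-one}, applied to $\D_1$ in place of $\C_0$. No new ingredient is needed.

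Concretely, fix an arbitrary total ordering $\prec$ of $\D_1$ and run two processes. In the first process, starting from $X_1 = Y_1 = \emptyset$, repeatedly pick the $\prec$-first element $\alpha$ of $\D_1 \setminus (X_1 \cup Y_1)$, add it to $X_1$, and for every $s \in S(\alpha)$ and $\nu \in \{+,-\}$ for which some $\beta \in \D_1 \setminus (X_1 \cup Y_1 \cup \{\alpha\})$ witnesses the above event (i.e.\ $\Mnu(s)$ contains at least $\theta_2 kt$ vertices $u$ with $|\Nmu(u) \cap U(\beta)| \ge \theta_2 kt$), add one such $\beta$ to $Y_1$. In the second process, starting from $X_2 = Y_2 = \emptyset$, do the same on $X_1$ but now choosing the $\prec$-\emph{last} unprocessed element and letting $\beta$ range over $X_1 \setminus (X_2 \cup Y_2 \cup \{\alpha\})$. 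Define $\D_2' := X_2$. Since for each $\alpha$ processed at most $2|S(\alpha)| \le 2\rho$ elements are transferred to $Y_1$ (respectively $Y_2$), we get $|X_1| \ge |\D_1|/(2\rho+1) \ge |\D_1|/3\rho$ and then $|X_2| \ge |X_1|/3\rho \ge |\D_1|/9\rho^2$, giving the desired lower bound on $|\D_2'|$.

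It remains to verify the key property. Suppose $\alpha \in \D_2' = X_2$, $s \in S(\alpha)$, $\nu \in \{+,-\}$, and $\beta \in \D_2' \setminus \{\alpha\}$ witnesses the event. If $\alpha \prec \beta$, then when $\alpha$ was put into $X_1$ the element $\beta$ had not yet been assigned (the first process processes elements in increasing order), so $\beta \in \D_1 \setminus (X_1 \cup Y_1 \cup \{\alpha\})$ was an eligible witness; hence some $\gamma$ was added to $Y_1$ at that moment, and $\gamma \in Y_1 \subseteq \D_1 \setminus X_1 \subseteq \D_1 \setminus \D_2'$. If instead $\beta \prec \alpha$, the same argument applied to the second process (which goes in decreasing order) produces $\gamma \in Y_2 \subseteq X_1 \setminus X_2 \subseteq \D_1 \setminus \D_2'$. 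Either way the promised $\gamma$ exists. I do not expect a genuine obstacle here, since the proof is a direct transcription of the one for \Cref{claim:available-one}; the only thing to keep in mind is to phrase the ``good pair'' predicate uniformly in $(s,\nu,\beta)$, so that the same counting of at most $2|S(\alpha)|$ removals per step goes through unchanged.
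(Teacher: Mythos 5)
Your proposal is correct and is essentially the paper's own argument: the paper proves this claim by exactly the same two-stage greedy selection (increasing then decreasing order, moving at most $2|S(\alpha)|$ witnesses to $Y_1$ resp.\ $Y_2$ per step) transcribed from \Cref{claim:available-one} with the modified witness predicate. No issues.
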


				\begin{proof}
					The proof is very similar to that of \Cref{claim:available-one}. We pick an ordering $\prec$ of $\C_1$ and find partitions $\{X_1, Y_1\}$ of $\C_1$ and $\{X_2, Y_2\}$ of $X_1$ as before, namely that after adding an element $\alpha$ to $X_1$, for each $s \in S(\alpha)$ and $\nu \in \{+, -\}$, if there exists $\beta$ in $\C_1 \setminus (X_1 \cup Y_1 \cup \{\alpha\})$ such that $\Mnu(s)$ contains at least $\theta_2 kt$ vertices $u$ with $|\Nmu(u) \cap U(\beta)| \ge \theta_2 kt$, then we move one such $\beta$ to $Y_1$.
					A similar modification is done when defining $\{X_2, Y_2\}$.
					Take $\D_2' := X_2$ and analyse as before.
				\end{proof}
				Take $\D_2''$ to be a random subset of $\D_2'$, chosen uniformly at random, and let $\D_2'''$ be the set of elements $\alpha \in \D_2'$ such that for every $s \in S(\alpha)$ and $\nu \in \{+, -\}$, the set $\Mnu(s)$ contains at least $\theta_2 kt$ vertices $u$ for which $|\Nmu(u) \cap W(\D_2'' \setminus \{\alpha\})| \ge \theta_2 kt$. 
				As above, the next claim implies that there is a suitable choice of $\D_2$ with $|\D_2| \ge |\D_2'|/4 \ge |D_1|/36\rho^2$.
			\end{proof}

			\begin{claim}
				$\Pr[\alpha \in \D_2'''] \ge 1/2$ for every $\alpha \in \D_2'$.
			\end{claim}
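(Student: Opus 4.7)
The plan mirrors the structure of the proof of \Cref{claim:available-two}. Fix $\alpha \in \D_2'$. Since $\{\alpha \in \D_2'''\}$ is the intersection of $2|S(\alpha)| \le 2\rho$ events indexed by $(s,\nu)$, a union bound reduces the task to showing failure with probability at most $1/(4\rho)$ for each fixed pair. Fix $(s,\nu)$, write $M := \Mnu(s)$, $W' := W(\D_2' \sm \{\alpha\})$, $W'' := W(\D_2'' \sm \{\alpha\})$, $n_\beta(u) := |\Nmu(u) \cap U(\beta)|$, and $M' := \{u \in M : |\Nmu(u) \cap W'| \ge \theta_2 kt\}$. If $|M'| \ge \theta_2 kt$ the event holds deterministically (since $W' \subseteq W''$), so assume $|M'| < \theta_2 kt$. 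Under this assumption, the contrapositive of \Cref{claim:available-three} (using $U(\gamma) \subseteq W'$ for $\gamma \in \D_1 \sm \D_2'$) forces $|K_\beta| < \theta_2 kt$ for every $\beta \in \D_2' \sm \{\alpha\}$, where $K_\beta := \{u \in M : n_\beta(u) \ge \theta_2 kt\}$.

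The heart of the argument is a ``nice vs.\ not-nice'' dichotomy inside $M'' := M \sm M'$. Write $S_u := \sum_\beta n_\beta(u)$, and note $S_u \ge |\Nmu(u)| - \theta_2 kt \ge \tau_1 kt/4$ since $u \in \Vo$ and $\tau_1 \gg \theta_2$. Call $u \in M''$ \emph{nice} if $\max_\beta n_\beta(u) \le S_u/(256\rho^2)$, and \emph{not-nice} otherwise. For nice $u$, the random variable $g(u) := \sum_\beta n_\beta(u) \cdot \mathbf{1}[\beta \notin \D_2'']$ has mean $S_u/2$ and variance $\tfrac{1}{4}\sum_\beta n_\beta(u)^2 \le \tfrac{1}{4}\max_\beta n_\beta(u) \cdot S_u \le S_u^2/(1024\rho^2)$, so Chebyshev's inequality (with $S_u/2 - \theta_2 kt \ge S_u/4$) gives $\Pr[g(u) < \theta_2 kt] \le 1/(64\rho^2)$, hence $\Pr[|\Nmu(u) \cap W''| \ge \theta_2 kt] \ge 1 - 1/(64\rho^2)$. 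For not-nice $u$ there exists $\beta_0(u)$ with $n_{\beta_0(u)}(u) > S_u/(256\rho^2) \ge \theta_2 kt$; the last inequality uses $\tau_1 \ge 1024\rho^2\theta_2$, which is ensured by the parameter hierarchy, so $u \in K_{\beta_0(u)}$.

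To conclude, split into two sub-cases by the size of $\mathrm{Nice} \cap M''$. If $|\mathrm{Nice} \cap M''| \ge 2\theta_2 kt$, apply \Cref{prop:markov} with $\eta = 1/(8\rho)$ (its proof uses only linearity of expectation, so independence is not needed) to deduce that with probability at least $1 - 1/(8\rho) \ge 1 - 1/(4\rho)$, at least $(1 - 1/(8\rho)) \cdot 2\theta_2 kt \ge \theta_2 kt$ of the nice vertices satisfy $|\Nmu(u) \cap W''| \ge \theta_2 kt$. Otherwise $|\mathrm{NotNice} \cap M''| \ge |M''| - 2\theta_2 kt \ge \theta_1 kt/2$, and assigning each such $u$ to $\beta_0(u)$ produces sets $E_{\beta_0}$ with $|E_{\beta_0}| \le |K_{\beta_0}| < \theta_2 kt$ and $\sum_{\beta_0} |E_{\beta_0}| \ge \theta_1 kt/2$. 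Applying \Cref{prop:hoeffding} to $Y := \sum_{\beta_0} |E_{\beta_0}| \cdot \mathbf{1}[\beta_0 \notin \D_2'']$ with $\eta_1 = \theta_1/2$, $\eta_2 = \theta_2$ yields $\Pr[Y \ge \theta_2 kt] \ge 1 - \exp(-\theta_1/(16\theta_2)) = 1 - \exp(-8\rho) \ge 1 - 1/(4\rho)$; and any $u \in E_{\beta_0}$ with $\beta_0 \notin \D_2''$ is good because $|\Nmu(u) \cap W''| \ge n_{\beta_0}(u) \ge \theta_2 kt$. The main subtlety is precisely this nice/not-nice dichotomy: the defining property of $\D_2'$ only controls $|K_\beta|$ per $\beta$, not $n_\beta(u)$ per $u$, so one has to route each vertex to the Chebyshev argument (when its mass is spread out) or to the Hoeffding argument (when its mass is concentrated in a single gadget).
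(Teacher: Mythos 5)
Your argument is correct and matches the paper's proof in all essential respects: the paper routes each $u \in M$ via a function $f$ to a gadget $\beta$ with $|\Nmu(u) \cap U(\beta)| \ge \theta_2 kt$ (or to $\infty$ if no such $\beta$ exists), which is your nice/not-nice dichotomy drawn at the absolute threshold $\theta_2 kt$ rather than $S_u/(256\rho^2)$; the deterministic case is disposed of via \Cref{claim:available-three} in both proofs, and the concentrated vertices are handled by the same global application of \Cref{prop:hoeffding} to the preimage (your $E_{\beta_0}$) sizes. The only real deviation is that for the spread-out vertices you use Chebyshev where the paper applies \Cref{prop:hoeffding} per vertex followed by \Cref{prop:markov}; both are valid, and your remark that \Cref{prop:markov} needs no independence (which matters here, since the per-vertex events are all driven by the same random set $\D_2''$) is correct and is left implicit in the paper.
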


			\begin{proof}

				Fix $\alpha \in \D_2'$, $s \in S(\alpha)$ and $\nu \in \{+, -\}$. 
				We define the following notation.
				\begin{align*}
					\begin{array}{ll}
						\D' := \D_2' \setminus \{\alpha\}, 
						& \qquad \D'' := \D_2'' \setminus \{\alpha\}. \vspace{3pt}\\
						W' := W(\D'),
						& \qquad W'' := W(\D'').
					\end{array}
				\end{align*}
				Additionally, we define
				\begin{align*}
					M := \Mnu(s), 
					\qquad \qquad N'(u) := \Nmu(u) \text{ for $u \in M$}.
				\end{align*}
				Let $E$ be the event that $M$ has at least $\theta_2 kt$ vertices $u$ for which $|N'(u) \cap W''| \ge \theta_2 kt$. We will show that $\Pr[E] \ge 1 - \exp(-\theta_1/16\theta_2)$, which will suffice to prove the claim.

				\def \Mi {M_{\infty}}
				Define $f : M \to \D' \cup \{\infty\}$ as follows: for $u \in M$, if there is $\beta \in \D'$ such that $|N'(u) \cap U(\beta)| \ge \theta_2 kt$, set $f(u) := \beta$ (there may be several such $\beta$, pick one arbitrarily); otherwise, set $f(u) := \infty$. Define $\Mi := N \cap f^{-1}(\infty)$. We consider two cases: $|\Mi| \ge 2\theta_2 kt$ and $|\Mi| \le 2\theta_2 kt$.

				Consider the former case.
				Given $u \in M$, we claim that $|N'(u) \cap W''| \ge \theta_2 kt$ holds with probability at least $1 - 2\exp(-\theta_1/8\theta_2)$.
				Indeed, this event holds with probability $1$ if $|N'(u) \cap W'| \ge \theta_2 kt$, so we assume $|N'(u) \cap W'| \le \theta_2 kt$. Denote $m_{\beta} := |N'(u) \cap U(\beta)|$ for $\beta \in \D'$, and let $X_{\beta}$ be a random variable which takes value $m_{\beta}$ when $\beta \notin \D_2''$ and is $0$ otherwise. Set $X := \sum_{\beta} X_{\beta}$. 
				Then $m_{\beta} \le \theta_2 kt$ for every $\beta \in \D'$ (because $u \in \Mi$) and $\sum_{\beta} m_{\beta} = |N'(u) \setminus W'| \ge \tau_1 kt / 2 - \theta_2 kt \ge \theta_1 kt$ (by assumption on $|N'(u) \cap W'|$ and using $|N'(u)| \ge \tau_1 kt/2$, which holds for every $u \in \Vo$). By \Cref{prop:hoeffding} and definition of $X$, we have $|N'(u) \cap W''| \ge X \ge \theta_2 kt$ with probability at least $1 - \exp(-\theta_1/8\theta_2)$. It follows from \Cref{prop:markov} that, with probability at least $1 - \exp(-\theta_1/16\theta_2)$, the event $|N'(u) \cap W''| \ge \theta_2 kt$ holds for at least $\theta_2 kt$ values of $u \in \Mi$ (using the assumption $|\Mi| \ge 2\theta_2 kt$). This proves $\Pr[E] \ge 1 - \exp(-\theta_1/16\theta_2)$ in this case.

				Now consider the latter case, where $|\Mi| \le 2\theta_2 kt$.
				Then $|M \cap f^{-1}(\D')| \ge (\theta_1 - 2\theta_2)kt \ge \theta_1 kt / 2$. 
				If there is $\beta \in \D'$ which is the image of at least $\theta_2 kt$ vertices $u$ in $M$, then $\Pr[E] = 1$ by choice of $\D_2'$. 
				So suppose that every $\beta$ in $\D'$ is the image of at most $\theta_2 kt$ vertices $u$ in $M$. 
				Let $X_{\beta}$ be the random variable which is $|M \cap f^{-1}(\beta)|$ if $\beta \notin \D''$ and $0$ otherwise. Setting $X := \sum_{\beta} X_{\beta}$, we have $X \ge \theta_2 kt$ with probability at least $1 - \exp(-\theta_1 / 16 \theta_2)$, by \Cref{prop:hoeffding}. Note that $X$ lower bounds the number of vertices $u \in N$ for which $|N'(u) \cap W''| \ge \theta_2 kt$, as each vertex in $M$ is counted at most one time. 
				Thus, this implies $\Pr[E] \ge 1 - \exp(-\theta_1 / 16 \theta_2)$, as required.
			\end{proof}
		
		\subsubsection{Proof of \Cref{lem:available-step-3}}

			\def \Ma {M_{\alpha}}
			\def \Wa {W_{\alpha}}
			As above, we prove \Cref{lem:available-step-3} by twice applying the following lemma.
			\begin{lemma} \label{lem:available-step-3-modified}
				Let $\D_1 \subseteq \C_2$ and $\theta_1 \le \phi_2$, and write $\theta_2 = \theta_1 / 2^8 \rho$. For each $\alpha \in \D_1$, $s \in S(\alpha)$ and $\nu \in \{+, -\}$ let $\Mnu(s)$ be a set of size at least $\theta_1 kt$ such that each $u \in \Mnu(s)$ is associated with a subset $\Ma(u)$ of $\Vo$ of size at least $\theta_1 kt$. Then there is a subset $\D_2 \subseteq \D_1$ of size at least $|\D_1| / 36 \rho^2$ such that for every $\alpha \in \D_2$, $s \in S(\alpha)$ and $\nu \in \{+, -\}$ there are at least $\theta_2 kt$ vertices $u \in \Mnu(s)$ such that $\Ma(u)$ contains at least $\theta_2 kt$ vertices $v$ with $|\Nmg(v) \cap W(\D_2 \sm \{\alpha\})| \ge \theta_2 kt$.
			\end{lemma}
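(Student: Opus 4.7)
The plan is to mirror the two-phase structure of Lemma~\ref{lem:available-step-2-modified}. Call $\beta\in\D_1\setminus\{\alpha\}$ a \emph{witness} for $(\alpha,s,\nu)$ if $\Mnu(s)$ contains at least $\theta_2 kt$ vertices $u$ whose $\Ma(u)$ contains at least $\theta_2 kt$ vertices $v$ with $|\Nmg(v)\cap U(\beta)|\ge\theta_2 kt$. In the first phase I run the same two-pass deterministic pruning as in Claim~\ref{claim:available-three}: process $\D_1$ in a forward order (moving a witness per $(s,\nu)$-pair into a discard whenever one exists) to obtain $X_1$, then process $X_1$ in the reverse order analogously to obtain $\D_2':=X_2$; this produces $|\D_2'|\ge|\D_1|/9\rho^2$ and the \emph{witness-outside property}: whenever some $\beta\in\D_2'\setminus\{\alpha\}$ is a witness for $(\alpha,s,\nu)$, so is some $\gamma\in\D_1\setminus\D_2'$. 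In the second phase I take $\D_2''\subseteq\D_2'$ uniformly at random, let $\D_2'''$ be the set of $\alpha\in\D_2'$ for which the desired conclusion holds with $W(\D_2''\setminus\{\alpha\})$ in place of $W(\D_2\setminus\{\alpha\})$, and, using independence of $\{\alpha\in\D_2''\}$ and $\{\alpha\in\D_2'''\}$ together with $\Pr[\alpha\in\D_2''']\ge 1/2$, take $\D_2:=\D_2''\cap\D_2'''$ at an outcome realising $\Ex[|\D_2''\cap\D_2'''|]\ge|\D_2'|/4\ge|\D_1|/36\rho^2$.

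The work is to show $\Pr[\alpha\in\D_2''']\ge 1/2$ for each $\alpha\in\D_2'$. Fix $\alpha,s,\nu$ and write $\D':=\D_2'\setminus\{\alpha\}$, $W':=W(\D')$, $W'':=W(\D_2''\setminus\{\alpha\})\supseteq W'$, $M:=\Mnu(s)$; define $h(v):=\beta$ if $\beta\in\D'$ satisfies $|\Nmg(v)\cap U(\beta)|\ge\theta_2 kt$ (pick one such $\beta$), and $h(v):=\infty$ otherwise, and put $n_\beta^{(u)}:=|\Ma(u)\cap h^{-1}(\beta)|$. Call $v\in\Vo$ \emph{useful} if $|\Nmg(v)\cap W''|\ge\theta_2 kt$, and $u\in M$ \emph{good} if $\Ma(u)$ contains at least $\theta_2 kt$ useful vertices. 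Since $|\Ma(u)|\ge\theta_1 kt$, one of the subsets $M_A:=\{u\in M:|\Ma(u)\cap h^{-1}(\infty)|\ge\theta_1 kt/2\}$ and $M_B:=M\setminus M_A$ has size at least $\theta_1 kt/2$.

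If $|M_A|\ge\theta_1 kt/2$, then for $u\in M_A$ and $v\in\Ma(u)\cap h^{-1}(\infty)$ we have $|\Nmg(v)\cap U(\beta)|<\theta_2 kt$ for every $\beta\in\D'$, so Proposition~\ref{prop:hoeffding} gives $v$ useful with probability at least $1-\exp(-\theta_1/8\theta_2)$ (unless $v$ is useful for free via $|\Nmg(v)\cap W'|\ge\theta_2 kt$). Two nested applications of Proposition~\ref{prop:markov}---whose proof does not actually require independence, only $X\le r$---first over $v\in\Ma(u)\cap h^{-1}(\infty)$ for each $u\in M_A$ and then over $u\in M_A$, yield at least $\theta_2 kt$ good $u$'s with probability at least $1-\exp(-\theta_1/32\theta_2)$. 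If instead $|M_B|\ge\theta_1 kt/2$, I check whether some $\beta\in\D'$ is a witness for $(\alpha,s,\nu)$. If so, the witness-outside property supplies $\gamma\in\D_1\setminus\D_2'$ which is also a witness, and since $U(\gamma)\subseteq W'\subseteq W''$, at least $\theta_2 kt$ vertices $u$ are good deterministically. Otherwise $|\{u\in M:n_\beta^{(u)}\ge\theta_2 kt\}|<\theta_2 kt$ for every $\beta\in\D'$, and I further split $M_B=M_B^{(1)}\cup M_B^{(2)}$ where $M_B^{(1)}$ is the set of $u\in M_B$ with every $n_\beta^{(u)}<\theta_2 kt$; one of $|M_B^{(1)}|,|M_B^{(2)}|$ is at least $\theta_1 kt/4$. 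For $u\in M_B^{(1)}$, per-$u$ Proposition~\ref{prop:hoeffding} applied to $Y_u:=\sum_\beta n_\beta^{(u)}\mathbf 1[\beta\notin\D_2'']$---which lower-bounds the useful count in $\Ma(u)$ because $h$ is a function and $\beta\notin\D_2''$ forces $U(\beta)\subseteq W''$---gives $u$ good with probability at least $1-\exp(-\theta_1/16\theta_2)$, and Proposition~\ref{prop:markov} over $M_B^{(1)}$ completes this branch. For $u\in M_B^{(2)}$, pick $\beta(u)\in\D'$ with $n_{\beta(u)}^{(u)}\ge\theta_2 kt$ and set $M_\beta^*:=\{u\in M_B^{(2)}:\beta(u)=\beta\}$; the sub-case hypothesis forces $|M_\beta^*|<\theta_2 kt$ while $\sum_\beta|M_\beta^*|\ge\theta_1 kt/4$, so a \emph{global} application of Proposition~\ref{prop:hoeffding} to $X:=\sum_\beta|M_\beta^*|\mathbf 1[\beta\notin\D_2'']$ gives $X\ge\theta_2 kt$ with probability at least $1-\exp(-\theta_1/32\theta_2)$, and $X$ is a lower bound on the good $u$-count because the $M_\beta^*$'s are disjoint and each $u\in M_\beta^*$ with $\beta\notin\D_2''$ is good. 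A union bound over the $\le 2\rho$ pairs $(s,\nu)$ and the choice $\theta_2=\theta_1/2^8\rho$ deliver $\Pr[\alpha\in\D_2''']\ge 1/2$.

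I expect the main obstacle to be the $M_B^{(2)}$ branch: individual $u$'s for which a single $\beta$ dominates $\Ma(u)$ resist per-$u$ tail bounds, and the workaround---partitioning $M_B^{(2)}$ by the dominating $\beta$ and applying a single global Hoeffding to the part sizes---works precisely because the complementary (``no $\beta$ is a witness'') hypothesis caps $|M_\beta^*|$ at $\theta_2 kt$, exactly the bound required by Proposition~\ref{prop:hoeffding}.
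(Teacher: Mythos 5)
Your proposal is correct and follows essentially the same route as the paper: the same two-pass deterministic pruning to get the ``witness-outside'' property, the same random half $\D_2''$ with $\D_2:=\D_2''\cap\D_2'''$, and the same three-level analysis in which each level is split into a ``spread-out'' case handled by Proposition~\ref{prop:hoeffding} on the fibres of $h$ and a ``concentrated'' case handled by Proposition~\ref{prop:markov} plus the inner bound, with concentration on a single $\beta$ at the top level dispatched deterministically via the witness-outside property. The paper packages this as a chain of functions $f_1,\dots,f_4$ rather than your explicit partition $M=M_A\cup M_B^{(1)}\cup M_B^{(2)}$, but the content is identical, and your observation that Proposition~\ref{prop:markov} needs no independence is a point the paper uses implicitly as well.
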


			Before proving \Cref{lem:available-step-3-modified}, we show that it implies \Cref{lem:available-step-3}. Write $\Wa := W(\C_2 \sm \{\alpha\})$ here for brevity. Indeed, first apply the former lemma to $\C_2$ with $\theta_1 = \phi_2$, where $\Ma(u) := \Npg(u) \cap \Wa$ and $\Mnu(s)$ is the set of vertices $u$ in $\Nnuo(s) \cap \Wa$ for which $|\Nmu(u) \cap \Wa| \ge \theta_1 kt$ for $\mu \in \{+, -\}$ (so $|\Mnu(s)| \ge \theta_1 kt$ for $\alpha \in \C_2$, $s \in S(\alpha)$ and $\nu \in \{+, -\}$). Denote the resulting set $\C_2'$ and apply the same lemma to $\C_2'$, with $\theta_1 = \phi_2 / 2^8 \rho$, where now $\Ma(u) := \Nmg(u) \cap \Wa$ and $\Mnu(s)$ is the set of vertices $u$ in $\Nnuo(s) \cap \Wa$ for which $|\Nmg(u) \cap \Wa| \ge \theta_1 kt$ and $\Npg(u) \cap \Wa$ contains at least $\theta_1 kt$ vertices $v$ with $|\Nmg(v) \cap W(\C_2' \sm \{\alpha\})| \ge \theta_1 kt$. Take $\C_3$ to be the set resulting from the latter application. Then $\C_3$ satisfies the requirements of \Cref{lem:available-step-3}.

			\begin{proof}
				We pick a subset $\D_2' \subseteq \D_1$ as in the following claim, which can be proved similarly to \Cref{claim:available-one} and \Cref{claim:available-three}. 
				\begin{claim}
					There exists $\D_2' \subseteq \D_1$ of size at least $|\D_1|/9\rho^2$ such that for every $\alpha \in \D_2'$, $s \in S(\alpha)$ and $\nu \in \{+, -\}$, if $\Mnu(s)$ contains at least $\theta_2 kt$ vertices $u$ such that $\Ma(u)$ contains at least $\theta_2 kt$ vertices $v$ with $|\Nmg(v) \cap U(\beta)| \ge \theta_2 kt$, for some $\beta \in \D_2' \setminus \{\alpha\}$, then there exists $\gamma \in \D_1 \setminus \D_2'$ with the same property.
				\end{claim}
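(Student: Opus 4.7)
The approach is a two-pass elimination in the style of \Cref{claim:available-one} and \Cref{claim:available-three}. For a triple $(\alpha, s, \nu)$ with $\alpha \in \D_1$, $s \in S(\alpha)$ and $\nu \in \{+, -\}$, call $\beta \in \D_1 \setminus \{\alpha\}$ a \emph{witness} if $\Mnu(s)$ contains at least $\theta_2 kt$ vertices $u$ such that $\Ma(u)$ contains at least $\theta_2 kt$ vertices $v$ with $|\Nmg(v) \cap U(\beta)| \ge \theta_2 kt$. The goal is to ensure that whenever such a witness $\beta$ survives inside $\D_2'$, some witness (not necessarily $\beta$ itself) was discarded during the construction.

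Fix an arbitrary total order $\prec$ on $\D_1$. In the first pass, initialize $X_1 = Y_1 = \emptyset$; while $\D_1 \setminus (X_1 \cup Y_1)$ is nonempty, take its $\prec$-first element $\alpha$, place $\alpha$ in $X_1$, and for each pair $(s, \nu) \in S(\alpha) \times \{+, -\}$, if some witness $\beta$ for $(\alpha, s, \nu)$ lies in $\D_1 \setminus (X_1 \cup Y_1 \cup \{\alpha\})$, move one such $\beta$ to $Y_1$. In the second pass, apply the analogous procedure to $X_1$ but processing its elements in \emph{reverse} $\prec$-order, producing a partition $\{X_2, Y_2\}$ of $X_1$; set $\D_2' := X_2$.

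Each time an element $\alpha$ enters $X_i$ it triggers at most $2|S(\alpha)| \le 2\rho$ additions to $Y_i$, so $|X_1| \ge |\D_1|/(2\rho + 1) \ge |\D_1|/3\rho$, and likewise $|X_2| \ge |X_1|/3\rho$, giving $|\D_2'| \ge |\D_1|/9\rho^2$. For the witness-export property, fix $\alpha \in \D_2'$ and a witness $\beta \in \D_2' \setminus \{\alpha\}$ for some $(s, \nu)$. If $\alpha \prec \beta$, then at the moment $\alpha$ was added to $X_1$ the element $\beta$ (which ultimately lies in $X_1$, hence was never placed in $Y_1$) was still unassigned; so while processing $(\alpha, s, \nu)$ a witness $\gamma$ was moved into $Y_1 \subseteq \D_1 \setminus \D_2'$. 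If instead $\beta \prec \alpha$, the reverse-order second pass processed $\alpha$ before $\beta$, and since $\beta$ ends up in $X_2$ it was still unassigned in $X_1$ at that moment, so a witness $\gamma \in Y_2 \subseteq \D_1 \setminus \D_2'$ was moved. Either way, $\gamma$ is the required element. The mechanism is identical to the two earlier claims, and the only point that requires noting—namely that whichever witness is moved remains a valid witness for $(\alpha, s, \nu)$ since the witness condition is a property of $\beta$ alone given that triple—is immediate from the definition, so no further obstacle arises.
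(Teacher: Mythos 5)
Your proposal is correct and is precisely the argument the paper intends: the paper only states that this claim "can be proved similarly to Claim 4.7 and Claim 4.10," and your two-pass elimination (forward order for $\{X_1,Y_1\}$, reverse order for $\{X_2,Y_2\}$, with at most $2|S(\alpha)|\le 2\rho$ removals per processed element) is exactly that argument instantiated for the new witness condition. The size bound $|\D_1|/9\rho^2$ and the case analysis on whether $\alpha\prec\beta$ or $\beta\prec\alpha$ match the paper's template, so nothing further is needed.
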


				As usual, let $\D_2''$ be a random subset of $\D_2'$, chosen uniformly at random, and let $\D_2'''$ be the set of elements $\alpha \in \D_2'$ such that for every vertex $s \in S(\alpha)$ and $\nu \in \{+, -\}$, the set $\Mnu(s)$ contains at least $\theta_2 kt$ vertices $u$ for which $\Ma(u)$ contains at least $\theta_2 kt$ vertices $v$ that satisfy $|\Nmg(v) \cap W(\D_2'' \setminus \{\alpha\})| \ge \theta_2 kt$. Again, as usual, it suffices to prove the following claim.
			\end{proof}

			\begin{claim}
				$\Pr[\alpha \in \D_3'''] \ge 1/2$ for $\alpha \in \D_3'$.
			\end{claim}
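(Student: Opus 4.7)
Fix $\alpha \in \D_2'$, $s \in S(\alpha)$ and $\nu \in \{+,-\}$, write $W'' := W(\D_2'' \setminus \{\alpha\})$, and let $E_{s,\nu}$ be the event that $\Mnu(s)$ contains at least $\theta_2 kt$ vertices $u$ for which $\Ma(u)$ contains at least $\theta_2 kt$ vertices $v$ with $|\Nmg(v) \cap W''| \ge \theta_2 kt$ (reading the ``$\D_3$'' in the statement as a typo for the $\D_2$ objects just defined, the claim then follows by showing $\Pr[E_{s,\nu}] \ge 1 - \exp(-\Omega(\rho))$ and union-bounding over the $\le 2\rho$ pairs $(s,\nu)$). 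The distinguishing feature, compared with the corresponding claim proved inside \Cref{lem:available-step-2}, is the extra nested quantifier: I must witness many $u$'s whose $\Ma(u)$ is itself rich, not just many $u$'s with rich own neighbourhoods. My plan is to mirror the earlier argument at \emph{two} nested levels, with the outer level partly absorbed by the defining property of $\D_2'$ and the inner level handled by Hoeffding plus \Cref{prop:markov}.

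For the outer level, set $\D' := \D_2' \setminus \{\alpha\}$ and define $g : \Mnu(s) \to \D' \cup \{\infty\}$ by $g(u) = \beta$ if $\Ma(u)$ contains at least $\theta_2 kt$ vertices $v$ with $|\Nmg(v) \cap U(\beta)| \ge \theta_2 kt$ (choose one such $\beta$ arbitrarily), and $g(u) = \infty$ otherwise. If some $\beta \in \D'$ has $|g^{-1}(\beta)| \ge \theta_2 kt$, then the defining property of $\D_2'$ (secured by the claim immediately preceding this one) produces $\gamma \in \D_1 \setminus \D_2'$ with the same property; since $\gamma \notin \D_2''$, disjointness of the $U(\beta)$'s (\ref{itm:gadget-disjoint}) gives $U(\gamma) \subseteq W''$, so each of the $\ge \theta_2 kt$ witnessing $u$'s is ``successful'' in the sense of $E_{s,\nu}$ and $\Pr[E_{s,\nu}] = 1$. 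Assume henceforth $|g^{-1}(\beta)| < \theta_2 kt$ for every $\beta \in \D'$, and split according to whether $|g^{-1}(\D')|$ or $|g^{-1}(\infty)|$ exceeds $\theta_1 kt / 2$.

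If $|g^{-1}(\D')| \ge \theta_1 kt/2$, the number of successful $u$'s is at least $\sum_{\beta \in \D'} |g^{-1}(\beta)| \cdot \mathbf{1}[\beta \notin \D_2'']$, since any $u$ with $g(u) = \beta \notin \D_2''$ is automatically successful ($U(\beta) \subseteq W''$ forces the witnessing $v$'s to satisfy $|\Nmg(v) \cap W''| \ge \theta_2 kt$). The summands are $\le \theta_2 kt$ by the current assumption and sum to $\ge \theta_1 kt / 2$, so \Cref{prop:hoeffding} gives this sum is $\ge \theta_2 kt$ with probability $\ge 1 - \exp(-\theta_1/(16\theta_2))$. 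If instead $|g^{-1}(\infty)| \ge \theta_1 kt / 2$, I estimate $\Pr[u\text{ successful}]$ for each $u \in g^{-1}(\infty)$ separately and combine via \Cref{prop:markov}; the events are correlated through $\D_2''$, but \Cref{prop:markov}'s proof uses only the per-variable expectation bound and Markov's inequality, so it remains valid without independence (as is already implicit in the preceding claim's proof).

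For the per-$u$ estimate when $u \in g^{-1}(\infty)$, the inner level mirrors the structure used inside \Cref{lem:available-step-2}: define $f_u : \Ma(u) \to \D' \cup \{\infty\}$ by $f_u(v) = \beta$ if $|\Nmg(v) \cap U(\beta)| \ge \theta_2 kt$; by $u \in g^{-1}(\infty)$, every fibre satisfies $|f_u^{-1}(\beta)| < \theta_2 kt$. If $|f_u^{-1}(\infty)| \ge 2\theta_2 kt$, each ``spread'' $v \in f_u^{-1}(\infty)$ has $|\Nmg(v) \cap W''| \ge \theta_2 kt$ with probability $\ge 1 - \exp(-\theta_1/(8\theta_2))$ by \Cref{prop:hoeffding} (using $|\Nmg(v)| \ge \tau_1 kt / 2$ partitioned over $\{U(\beta)\}_{\beta \in \D'}$, pieces $\le \theta_2 kt$, relevant total $\ge \theta_1 kt$), and \Cref{prop:markov} boosts this to $\ge \theta_2 kt$ good $v$'s with probability $\ge 1 - \exp(-\theta_1/(16\theta_2))$. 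If $|f_u^{-1}(\infty)| < 2\theta_2 kt$, then $|f_u^{-1}(\D')| \ge \theta_1 kt / 2$ and \Cref{prop:hoeffding} applied to $\sum_{\beta \in \D'} |f_u^{-1}(\beta)| \cdot \mathbf{1}[\beta \notin \D_2'']$ (summands $\le \theta_2 kt$, sum $\ge \theta_1 kt / 2$) yields $\ge \theta_2 kt$ good $v$'s with the same probability, since $f_u(v) = \beta \notin \D_2''$ makes $v$ automatically good. Either way $\Pr[u\text{ successful}] \ge 1 - \exp(-\theta_1/(16\theta_2))$, \Cref{prop:markov} then gives $\ge \theta_2 kt$ successful $u$'s with probability $\ge 1 - \exp(-\theta_1/(32\theta_2))$, and with $\theta_2 = \theta_1/(2^8 \rho)$ the union bound over the $\le 2\rho$ pairs $(s,\nu)$ yields $\Pr[\alpha \in \D_2'''] \ge 1 - 2\rho\exp(-8\rho) \ge 1/2$ for $\rho = 10^4$. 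The main obstacle is purely bookkeeping: correctly nesting two rounds of the Hoeffding/\Cref{prop:markov} argument, letting the $\D_2'$ property absorb the heavy outer fibres at the top level, without double-counting the randomness in $\D_2''$ across the two levels.
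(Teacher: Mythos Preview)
Your proof is correct and follows essentially the same approach as the paper: the paper packages the nested case analysis via a tower of functions $f_1, f_2, f_3, f_4$ (your $g$ is their $f_3$, your $f_u$ is their $f_2$, and your heavy-fibre case corresponds to $f_4(s)\neq\infty$), but the logic of using the $\D_2'$ property to dispose of heavy fibres at the top and then alternating Hoeffding with \Cref{prop:markov} at the inner levels is identical. Your explicit remark that \Cref{prop:markov} needs only linearity of expectation (not independence) is a point the paper relies on tacitly.
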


			\begin{proof}
				Fix $\alpha \in \D_2'$, $s \in S(\alpha)$ and $\nu \in \{+, -\}$. We define the following notation.
				\begin{align*}
					\begin{array}{lll}
						\D := \D_1 \setminus \{\alpha\}, 
						& \qquad \D' := \D_2' \setminus \{\alpha\}, 
						& \qquad \D'' := \D_2'' \setminus \{\alpha\}. \\
						W := W(\D),
						& \qquad W' := W(\D'),
						& \qquad W'' := W(\D'').
					\end{array}
				\end{align*}
				Additionally, let $M''(v) := \Nmg(v)$, $M'(u) := \Ma(u)$ and $M := \Mnu(s)$.

				Let $E$ be the event that $M$ has at least $\theta_2 kt$ vertices $u$ for which $M'(u)$ contains at least $\theta_2 kt$ vertices $v$ such that $|M''(v) \cap W''| \ge \theta_2 kt$. As before, it suffices to prove $\Pr[E] \ge 1 - \exp(-\theta_1/32\theta_2)$.
				 
				Define functions $f_1, f_2, f_3, f_4$ from subsets of $V$ to $\D' \cup \{\infty\}$, as follows.
				\begin{enumerate}
					\item
						For $w \in V$, if $w \in U(\beta)$ with $\beta \in \D'$, put $f_1(w) := \beta$; otherwise $f_1(w) := \infty$.
					\item
						For $v \in \Vo$, if there exists $\beta \in \D'$ such that $f_1(w) = \beta$ for at least $\theta_2 kt$ vertices $w$ in $M''(v)$, put $f_2(v) := \beta$ (there may be several suitable choices of $\beta$, pick one arbitrarily); otherwise $f_2(v) := \infty$.
					\item
						For $u \in M$, if there exists $\beta \in \D'$ such that $f_2(v) = \beta$ for at least $\theta_2 kt$ vertices $v$ in $M'(u)$, put $f_3(u) := \beta$; otherwise $f_3(u) := \infty$. 
					\item
						If there exists $\beta \in \D'$ such that $f_3(u) = \beta$ for at least $\theta_2 kt$ vertices $u$ in $M$, put $f_4(s) := \beta$; otherwise $f_4(s) := \infty$.
				\end{enumerate}

				We draw the following conclusions regarding the above functions.

				\begin{itemize}
					\item
						Let $v \in \Vo$ satisfy $f_2(v) = \infty$.
						Then $|M''(v) \cap W''| \ge \theta_2 kt$ with probability at least $1 - \exp(-\theta_1 / 8\theta_2)$, by \Cref{prop:hoeffding} (using $|M''(v)| \ge \tau_1 kt / 2$).
					\item
						Let $u \in M$ satisfy $f_3(u) = \infty$. 
						We claim that $M'(u)$ contains at least $\theta_2 kt$ vertices $v$ with $|M''(v) \cap W''| \ge \theta_2 kt$, with probability at least $1 - \exp(-\theta_1/16\theta_2)$.
						Indeed, if $|M'(u) \cap f_2^{-1}(\infty)| \ge 2\theta_2 kt$, this follows from \Cref{prop:markov} and the previous item, and, otherwise, it follows from \Cref{prop:hoeffding}, using $|M'(u)| \ge \theta_1 kt$.
					\item
						Suppose that $f_4(s) = \infty$.
						We claim that, with probability at least $1 - \exp(-\theta_1/32\theta_2)$, the set $M$ contains at least $\theta_2 kt$ vertices $u$ for which $M'(u)$ contains at least $\theta_2 kt$ vertices $v$ with $|M''(v) \cap W''| \ge \theta_2 kt$.
						If $|M \cap f_3^{-1}(\infty)| \ge 2\theta_2 kt$, then the claim follows from \Cref{prop:markov} and the previous item. Otherwise, it follows from \Cref{prop:hoeffding} (using $|M| \ge \theta_1 kt$).
				\end{itemize}
				In particular, if $f_4(s) = \infty$ then $\Pr[E] \ge 1 - \exp(-\theta_1/32\theta_2)$, and, otherwise, $\Pr[E] = 1$, by choice of $\D_2'$. 
			\end{proof}

	\subsection{Partition with many eligible neighbours} \label{subsec:eligible}

		In this subsection, we obtain a collection of pairwise disjoint sets of $10k$ gadgets, such that every vertex in each of these gadgets has many out- and in-neighbours that are candidates for connecting the gadgets in its set. Here is a definition of such candidates, and in \Cref{prop:eligible} we state the main result of this section, which proves the existence of a collection as described.

		For subsets $\A \subseteq \A_2$ and $W \subseteq V$, we say that a vertex $u$ is \emph{eligible} for $\A$ in $W$ if one of the following holds. 
		\begin{enumerate}[label = \rm(E\arabic*)]
			\item \label{itm:eligible-good}
				$u \in \Vg \cap W$ and $u$ has an out-neighbour in all but at most $k$ sets $\Sm(\alpha)$ with $\alpha \in \A$ and an in-neighbour in all but at most $k$ sets $\Sp(\alpha)$ with $\alpha \in \A$.
			\item \label{itm:eligible-out-good}
				$u \in (\Vgp \setminus \Vgm) \cap W$ and $u$ has an out-neighbour in all but at most $k$ set $\Sm(\alpha)$ with $\alpha \in \A$ and at least $\tau_3 kt$ in-neighbours that satisfy \ref{itm:eligible-good}.
			\item \label{itm:eligible-in-good}
				$u \in (\Vgm \setminus \Vgp) \cap W$ and $u$ has an in-neighbour in all but at most $k$ sets $\Sp(\alpha)$ with $\alpha \in \A$ and at least $\tau_3 kt$ out-neighbours that satisfy \ref{itm:eligible-good} or \ref{itm:eligible-out-good}.
			\item \label{itm:eligible-bad}
				$u \in \Vb \cap W$ and $u$ has at least $\tau_3 kt$ in-neighbours that satisfy \ref{itm:eligible-good} and at least $\tau_3 kt$ out-neighbours that satisfy \ref{itm:eligible-good} or \ref{itm:eligible-out-good}.
		\end{enumerate}
		We remark that this definition is similar to that of an available vertex, with the main difference being that whenever a vertex is required to have out-neighbours in all but at most $kt$ sets $S(\alpha)$ with $\alpha \in \A$ for it to be available for $\A$, here it is required to have out-neighbours in all but at most $k$ sets $S(\alpha)$ (and similarly for in-neighbours). 

		Recall that $\sp(\alpha)$ and $\smm(\alpha)$ are vertices in $S(\alpha)$ and $X(\alpha)$ is a set of size at most $\rho\sigma_1 kt$ such that all out-neighbours of $\sp(\alpha)$ in $V \setminus X(\alpha)$ are also out-neighbours of every vertex in $U(\alpha) \setminus S(\alpha)$, and similarly for in-neighbours of $\smm(\alpha)$.

		\begin{proposition} \label{prop:eligible}
			There is a subset $\A_3 \subseteq \A_2$ and a partition $\{\B_1, \ldots, \B_{\sigma_3 t}\}$ of $\A_3$, such that 
			\begin{itemize}
				\item
					$|\B_i| = 10k$ for every $i \in [\sigma_3 t]$.
				\item
					 For every $i \in [\sigma_3 t]$, $\alpha \in \B_i$ and $u \in S(\alpha)$, the vertex $u$ has at least $\tau_3 kt$ out- and in-neighbours that are eligible for $\B_i$ in $W(\A_3 \setminus \B_i) \sm X(\alpha)$.
				\item
					For every $i \in [\sigma_3 t]$, there are at least $n/10$ vertices in $\Vg \cap W(\A_3)$ that are eligible for $\B_i$.
			\end{itemize}
		\end{proposition}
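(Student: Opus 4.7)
My plan is to obtain $\A_3$ and its partition by a two-stage randomized construction. In the first stage, partition $\A_2$ uniformly at random into $N := 2\sigma_3 t$ parts $\B_1', \ldots, \B_N'$ of equal size $\sigma_2 k/(2\sigma_3) = 500k$. In the second stage, keep $\sigma_3 t$ of the parts with sufficiently many ``good'' gadgets (in a sense defined below) and, within each kept part, uniformly at random pick $10k$ good gadgets to form $\B_i$; set $\A_3 := \bigcup_i \B_i$.

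The core probabilistic estimate is a \emph{balance} property: for each $u \in \Vgp$, the set $B_u$ of $\alpha \in \A_2$ for which $u$ has no out-neighbour in $\Sm(\alpha)$ satisfies $|B_u| \le kt$, so $|B_u \cap \B_i'|$ is hypergeometric with mean at most $k/(2\sigma_3)$, and Chernoff gives $\Pr[|B_u \cap \B_i'| > k] \le \exp(-\Omega(k))$; an analogous statement holds for $u \in \Vgm$ and in-neighbours in $\Sp(\alpha)$. To verify the second bullet I perform a case analysis on \ref{itm:available-good}--\ref{itm:available-bad}: by \Cref{prop:available}, each $s \in S(\alpha)$ has $\ge \tau_2 kt$ out-neighbours available w.r.t.\ $\A_2$, and each such available $v$ becomes eligible for $\B_{i(\alpha)}'$ (matching \ref{itm:available-good}$\to$\ref{itm:eligible-good}, and so on) once the balance property holds for $v$ itself and, in the nested cases, for enough proxy neighbours of $v$. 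The set-containment in the eligibility definition is automatic because $\A_3 \sm \B_i \subseteq \A_2 \sm \{\alpha\}$ gives $W(\A_2 \sm \{\alpha\}) \subseteq W(\A_3 \sm \B_i)$. Iterating Chernoff and Markov through the nested cases bounds, for fixed $(\alpha,s)$ and side, the probability that $s$ has fewer than $2\tau_3 kt$ eligible neighbours on that side by $\tau_2\exp(-\Omega(k))/(\tau_2 - 2\tau_3)$; a union bound over the at most $2\rho$ choices of $(s,\text{side})$ gives $\Pr[\alpha \text{ is bad for its part}] \le \epsilon$, where $\epsilon$ can be made any prescribed small constant by taking $k$ large.

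Consequently $\Ex[|\text{bad}\cap\B_i'|] \le 500\epsilon k$, so by Markov each fixed part contains more than $10k$ bad $\alpha$ with probability $\le 50\epsilon$, and a second Markov step shows that the number of parts with more than $10k$ bad $\alpha$ exceeds $\sigma_3 t$ with probability at most $100\epsilon \le 1/2$. Thus with probability $\ge 1/2$ at least $\sigma_3 t$ of the parts contain $\ge 10k$ good $\alpha$; I keep such parts and trim each to $10k$ good gadgets. Eligibility transfers from $\B_i'$ to $\B_i$: since $\B_i \subseteq \B_i'$ the ``all but $k$'' condition only becomes easier, and $W(\A_3 \sm \B_i) \supseteq W(\A_2 \sm \B_i')$ preserves set-memberships; the excision of $X(\alpha)$ costs at most $\rho\sigma_1 kt \ll \tau_3 kt$ eligible neighbours, so $\ge \tau_3 kt$ remain. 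For the third bullet, the uniformly random trimming within each kept part ensures that $\Pr[\alpha \in \A_3] \le 1/2 \cdot 10k/490k \le 1/98$ per $\alpha$, so $\Ex[|U(\A_3)|] \le n/98$ and Markov gives $|U(\A_3)| \le n/10$ with constant probability; combined with $|\Vg| \ge n/2$ from \ref{itm:gadget-size-V-good} and the balance event above (which makes every $u \in \Vg \cap W(\A_3)$ eligible for $\B_i$ via \ref{itm:eligible-good}), we obtain $|\Vg \cap W(\A_3)| \ge n/2 - n/10 \ge n/10$.

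The main obstacle is ensuring that the probability of a gadget being bad for its part is bounded by a constant depending only on $k$, independent of $n$ and $t$, so that the averaging across parts succeeds uniformly in $t$; a naive union bound over $\alpha \in \A_2$ would require $k \gtrsim \log t$, which is not available. This is circumvented by using Markov (rather than a union bound) on the number of bad parts, aided by the nested Chernoff--Markov cascade and the hierarchy $\tau_1 \gg \tau_2 \gg \tau_3 \gg \rho\sigma_1$ to absorb losses at each stage, together with the standing assumption that $k$ is a sufficiently large constant.
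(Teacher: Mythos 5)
Your overall strategy (random partition of $\A_2$ into $\Theta(t)$ parts, a per-vertex estimate that an available vertex becomes eligible with probability $1-e^{-\Omega(k)}$, and Markov-type averaging over parts instead of a union bound over $\alpha$ or over $t$) is the same as the paper's, and your treatment of the first two bullets is essentially sound. The genuine gaps are both in the third bullet. First, your bound $\Pr[\alpha\in\A_3]\le\tfrac12\cdot\tfrac{10k}{490k}$ is unjustified: a kept part is only guaranteed to contain \emph{at least} $10k$ good gadgets, so the uniform trimming may retain a given good gadget with probability $1$, and since the sets $U(\alpha)$ have wildly varying sizes (the paths $P(\alpha)$ can be long, and $\sum_\alpha|U(\alpha)|$ can be close to $n$), you cannot conclude $\Ex[|U(\A_3)|]\le n/98$. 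The paper avoids this by a deterministic selection: among the qualifying parts it keeps the $\sigma_3 t$ with smallest $|U(\C_i)|$, which forces $|U(\A_3)|\le n/8$ by disjointness. Second, you assert that "the balance event above makes \emph{every} $u\in\Vg\cap W(\A_3)$ eligible for $\B_i$ via \ref{itm:eligible-good}." Your balance event holds for a \emph{fixed} vertex $u$ and part with probability $1-e^{-\Omega(k)}$; making it hold for all $n$ vertices simultaneously would need a union bound over $u$, i.e.\ $k\gtrsim\log n$, which is exactly the kind of bound you correctly reject elsewhere. The paper instead uses a deterministic double count (an auxiliary bipartite graph between $\Vg$ and the parts): since each $u\in\Vg$ fails the "all but $k$" condition for at most $O(t)$ of the $\Theta(t)$ parts, all but $O(t)$ parts have at least $n/4$ balanced vertices of $\Vg$, and these parts are the ones retained; this yields "at least $n/10$ eligible vertices per part," not "all vertices," which is all the proposition asks for.

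Both gaps are repairable with these deterministic counting arguments, and the rest of your cascade (hypergeometric rather than independent assignment is a harmless variation) goes through, but as written the third bullet is not established.
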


		\begin{proof}
			\def \Wa {W_{\alpha}}
			Let $\phi = 16\sigma_3$; so $\phi = 160$.
			We partition $\A_2$ into $\phi t$ sets $\{\C_1, \ldots, \C_{\phi t}\}$, uniformly at random. For $\alpha \in \A_2$, let $i(\alpha)$ be the (random) index in $[\phi t]$ such that $\alpha \in \C_{i(\alpha)}$, and write $\Wa := W(\A_2 \setminus C_{i(\alpha)}) \setminus X(\alpha)$.

			\begin{claim} \label{claim:vertex-eligible}
				Let $\alpha \in \A_2$ and let $u$ be a vertex which is available for $\alpha$ (with respect to $\A_1$). Then $\Pr[\text{$u$ is eligible for $C_{i(\alpha)}$ in $\Wa$}] \ge 1 - 3\exp(-k/12\phi)$.
			\end{claim}

			\begin{proof}
				Write $\C := \C_{i(\alpha)}$ and $W := \Wa$. 
				We consider four cases, according to the four possible scenarios in the definition of an available vertex.

				First, suppose that $u \in \Vg \cap W$; so $u$ has in-neighbours in all but at most $kt$ sets $\Sp(\beta)$ with $\beta \in \A_2$, and similarly for out-neighbours in $\Sm(\beta)$. Denote the set of elements $\beta \in \A_2 \setminus \{\alpha\}$ for which $u$ does not have an in-neighbour in $\Sp(\beta)$ or an out-neighbour in $\Sm(\beta)$ by $\X$; so $|\X| \le 2kt$, and the expected size of $\C \cap \X$ is at most $2k/\phi$. It follows from Chernoff's bounds (see \Cref{lem:chernoff}) that $|\C \cap \X| \le 4k / \phi \le k$ with probability at least $1 - \exp(-2k/3\phi)$. In particular, $u$ is eligible for $\C$ (in $W$) with probability at least $1 - \exp(-2k/3\phi)$. 

				Next, suppose that $u \in (\Vgp \setminus \Vgm) \cap W$, so $u$ has out-neighbours in all but at most $kt$ sets $\Sm(\beta)$ and it has at least $\tau_2 kt - |X(\alpha)| \ge \tau_2 kt/2$ in-neighbours in $\Vg \cap W$ (using $\tau_2 \gg \rho, \sigma_1$); denote this set of in-neighbours by $N$. First observe that every element in $N$ is eligible for $\alpha$ with probability at least $1 - \exp(-2k/3\phi)$, by the previous paragraph. By \Cref{prop:markov}, with probability at least $1 - \exp(-k/3\phi)$, at least $(1 - \exp(-k/3\phi))|N| \ge \tau_3 kt$ elements of $N$ are eligible for $\alpha$ (using $\tau_2 \gg \tau_3$ and that $k$ is large). Similarly to the previous paragraph, with probability at least $1 - \exp(-2k/3\phi)$ the vertex $u$ has out-neighbours in all but at most $k$ sets $\Sm(\beta)$ with $\beta \in \C$. It follows that $u$ is eligible for $\alpha$ with probability at least $1 - 2\exp(-k/3\phi)$. 

				The next case is when $u \in (\Vgm \setminus \Vgp) \cap W$. Then $u$ has in-neighbours in all but at most $kt$ sets $\Sp(\beta)$ and it has at least $\tau_2 kt/2$ out-neighbours as in \ref{itm:available-good} or \ref{itm:available-out-good}. 
				Similarly to the above, with probability at least $1 - 2\exp(-k/6\phi)$, at least a $(1 - 2\exp(-k/6\phi))$-fraction of $u$'s out-neighbours that satisfy \ref{itm:available-good} or \ref{itm:available-out-good} are eligible for $u$. Also, $u$ has in-neighbours in all but at most $k$ sets $\Sp(\beta)$, with probability at least $1 - \exp(-2k/3\phi)$. Altogether it follows that $u$ is eligible for $\alpha$ with probability at least $1 - 3\exp(-k/6\phi)$.

				Finally, if $u$ is as in \ref{itm:available-bad}, then, using arguments as in the previous two paragraphs, with probability at least $1 - 3\exp(-k/12\phi)$ it has at least $\tau_3 kt$ out- and in-neighbours that are eligible for $\alpha$. 
			\end{proof}

			We claim that the following three events hold simultaneously with positive probability.
			\begin{enumerate}[label = \rm(\alph*)]
				\item \label{itm:eligible-many-neighs-element}
					For all but at most $t$ values of $\alpha$ in $\A_2$, every $u \in S(\alpha)$ has at least $\tau_3 kt$ out- and in-neighbours that are eligible for $\C_{i(\alpha)}$ in $\Wa$.
				\item \label{itm:eligible-equal-parts}
					All but at most $t$ values $i \in [\phi t]$ satisfy $|\C_i| \ge 10k$.
				\item \label{itm:eligible-many-neighs-part}
					For all but at most $6t$ values of $i$ in $[\phi t]$, there are at least $n/4$ vertices that are eligible for $\C_i$ in $W(\C_i) \cap \Vg$.
			\end{enumerate}

			Fix $\alpha \in \A_2$ and $s \in S(\alpha)$. Recall that by choice of $\A_2$, the vertex $s$ has at least $\tau_2 kt$ out- and in-neighbours in $W(\A_2 \setminus \{\alpha\})$ that are available for $\alpha$ (with respect to $\A_2$), and so the number of out- and in-neighbours of $s$ in $\Wa$ that are available for $\alpha$ is at least $(\tau_2 - \rho \sigma_1)kt \ge \tau_2 kt / 2$ (using $W(\A_2 \setminus \{\alpha\}) \sm \Wa \subseteq X(\alpha)$). Thus, by \Cref{claim:vertex-eligible} and \Cref{prop:markov}, $s$ has at least  $(1 - 2\exp(-k/24\phi)) \tau_2 kt / 2 \ge \tau_3 kt$ out-neighbours that are eligible for $\C_{i(\alpha)}$ in $\Wa$, with probability at least $1 - 2\exp(-k/24\phi)$. A similar statement holds for in-neighbours of $s$.
			By a union bound, every $s \in S(\alpha)$ has at least $\tau_3 kt$ out- and in-neighbours that are eligible for $\C_{i(\alpha)}$ in $\Wa$, with probability at least $1 - 8\rho  \exp(-k/24\phi)$. 
			It follows from \Cref{prop:markov} that, with probability at least $1 - 3\rho\exp(-k/48\phi) > 1/2$, for at least $(1 - 3\rho\exp(-k/48\phi))|\A_2| \ge |\A_2| - t$ values of $\alpha$ in $\A_2$, every $s \in S(\alpha)$ has at least $\tau_3 kt$ out- and in-neighbours that are eligible for $\C_{i(\alpha)}$ in $\Wa$. Thus \ref{itm:eligible-many-neighs-element} holds with probability larger than $1/2$.

			By Chernoff's bounds (\Cref{lem:chernoff}), $|\C_i| \ge \sigma_2 k / 2\phi \ge 10k$ (using $\phi = 160$ and $\sigma_2 = 10^4$) with probability at least $1 - \exp(-\sigma_2 k/8\phi)$, for every $i \in [\phi t]$. It follows from \Cref{prop:markov} that $|\C_i| \ge 10k$ for all but at most $\exp(-\sigma_2 k / 16\phi) \phi t \le t$ values of $i \in [\phi t]$, with probability at least $1 - 2\exp(-\sigma_2 k / 16\phi) > 1/2$. So \ref{itm:eligible-equal-parts} holds with probability larger than $1/2$.

			Let $H$ be a bipartite auxiliary graph, with parts $\Vg$ and $[\phi t]$, such that $ui$ (with $u \in \Vg$ and $i \in [\phi t]$) is an edge in $H$ if $u$ has no in-neighbours in at least $k$ sets $\Sp(\alpha)$ with $\alpha \in \C_i$; or $u$ has no out-neighbours in at least $k$ sets $\Sm(\alpha)$ with $\alpha \in \C_i$; or $u \in U(\C_i)$. Then $e(H) \le 2t \cdot |\Vg| + n \le 3t \cdot |\Vg|$ (using that $|\Vg| \ge n/2$; see \ref{itm:gadget-good}). Thus all but at most $6t$ values of $i$ in $[\phi t]$ satisfy $d_H(i) \le |\Vg|/2$. Given such $i$, at least $|\Vg|/2 \ge n/4$ vertices $u \in \Vg$ are in $W(\C_i)$ and are eligible for $\C_i$.
			This shows that \ref{itm:eligible-many-neighs-part} holds deterministically.

			To summarise, the above three properties hold simultaneously with positive probability. Fix an instance of $\{\C_1, \ldots, \C_{\phi t}\}$ for which \ref{itm:eligible-many-neighs-element}, \ref{itm:eligible-equal-parts} and \ref{itm:eligible-many-neighs-part} all hold. 
			Let $I$ be the set of elements $i$ in $[\phi t]$ such that: for every $\alpha \in \C_i$, every $s \in S(\alpha)$ has at least $\tau_3 kt$ out- and in-neighbours in $\Wa$ that are eligible for $\C_i$; $|\C_i| \ge 10k$; at least $n/4$ vertices in $W(\C_i) \cap \Vg$ are eligible for $\C_i$. Then $|I| \ge \phi t - 8t \ge \phi t / 2$. Let $J$ be a subset of $I$ that consists of the $|I|/8 \ge \phi t / 16 = \sigma_3 t$ elements $i$ in $I$ for which $U(\C_i)$ is smallest; so $|\bigcup_{j \in J} U(\C_j)| \le n/8$. For each $j \in J$, let $\C_j'$ be a subset of $\C_j$ of size exactly $10k$. Let $j_1, \ldots, j_{\sigma_3 t} \in J$ be distinct, let $\B_i := \C_{j_i}'$ for $i \in [\sigma_3 t]$, and set $\A_3 := \bigcup_i \B_i$.  

			The sets $\A_3, \B_1, \ldots, \B_{\sigma_3 t}$ satisfy the requirements of the proposition. 
			Indeed, clearly $|\B_i| = 10k$ for $i \in [\sigma_3 t]$. The second bullet holds because $W(\A_3 \setminus \B_i) \setminus X(\alpha) \subseteq W(\A_2 \setminus \C_{j_i}) \setminus X(\alpha) = \Wa$ for $i \in [\sigma_3 t]$ and $\alpha \in \B_i$.
			Finally, because $|U(\A_3)| \le n/8$ and by \ref{itm:eligible-many-neighs-part}, there are at least $n/8$ vertices in $\Vg \cap W(\A_3 )$ that are eligible for $\B_i$, for $i \in [\sigma_3 t]$.
		\end{proof}

	\subsection{Many connected sets} \label{subsec:connected}

		We now find a collection of $t$ pairwise disjoint $k$-connected sets with some additional properties that will later allow us to add the unused vertices into these sets while maintaining $k$-connectivity.

		\begin{proposition} \label{prop:connected-parts}
			There exist pairwise-disjoint sets $V_1, \ldots, V_{t}$ such that 
			\begin{itemize}
				\item
					$2k \le |V_i| \le n/t$ for $i \in [t]$.
				\item
					$T[V_i]$ is $k$-connected for $i \in [t]$.
				\item
					Each $V_i$ contains at least $10k$ sets $U(\alpha)$ with $\alpha \in \A_3$.
				\item
					Let $Z := V \setminus (V_1 \cup \ldots \cup V_{t})$. For every $i \in [t]$, there are at least $n/100$ vertices in $Z \cap \Vg$ that have at least $k$ out- and in-neighbours in $V_i$.
			\end{itemize}
		\end{proposition}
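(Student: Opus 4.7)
The plan is to randomly partition $W(\A_3)$ by assigning each vertex independently and uniformly to one of $\sigma_3 t$ buckets. For each $i \in [\sigma_3 t]$, let $R_i$ be the vertices placed in bucket $i$ that are eligible for $\B_i$ in the sense of \Cref{prop:eligible}, and set $V_i^{\star} := U(\B_i) \cup R_i$. I will show that with positive probability, at least $t$ indices $i$ satisfy all three target properties: $T[V_i^{\star}]$ is $k$-connected, $2k \le |V_i^{\star}| \le n/t$, and at least $n/100$ vertices in $\Vg \cap Z^{\star}$ (where $Z^{\star} := V \sm \bigcup_j V_j^{\star}$) have at least $k$ out- and in-neighbours in $V_i^{\star}$. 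Selecting any $t$ such indices then gives the desired $V_1, \ldots, V_t$. The upper bound $|V_i^{\star}| \le n/t$ is easy for most indices: since $\sum_i |U(\B_i)| \le n/8$ (which follows from the construction of $\A_3$ in the proof of \Cref{prop:eligible}), fewer than $t/4$ indices have $|U(\B_i)| > n/(2t)$, and $|R_i|$ concentrates at $|W(\A_3)|/(\sigma_3 t) \le n/(10t)$ by Chernoff, so $|V_i^{\star}| \le n/t$ for all but few indices.

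Fix a single index $i$. Using \Cref{lem:chernoff} and \Cref{prop:markov}, each of the following events holds with probability $1 - \exp(-\Omega(k))$: (a) every $s \in S(\alpha)$ with $\alpha \in \B_i$ has $\Omega(k)$ eligible out- and in-neighbours in $R_i \sm X(\alpha)$, using that \Cref{prop:eligible} supplies $\tau_3 kt$ such neighbours, each of which lands in bucket $i$ with probability $1/\sigma_3 t$; (b) every eligible $u \in R_i$ of type \ref{itm:eligible-out-good}, \ref{itm:eligible-in-good} or \ref{itm:eligible-bad} has $\Omega(k)$ type-\ref{itm:eligible-good} neighbours of the required direction in $R_i$, by the same argument applied to the $\tau_3 kt$ neighbours guaranteed by the definition of eligibility; (c) at least $n/50$ vertices $u \in \Vg \cap W(\A_3)$ (so in particular in $Z^{\star}$ once bucket $i$ is fixed as a selected index) have at least $k$ in- and out-neighbours in $R_i$, using that \Cref{claim:additional-gadget-properties} gives $\Omega(\tau_1 kt)$ in- and out-neighbours of $u$ in $\Vg \cap W(\A_3)$. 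By \Cref{prop:markov}, (a)--(c) hold simultaneously for at least $(1 - \exp(-\Omega(k)))\sigma_3 t$ indices with positive probability; intersecting with the small-$U(\B_i)$ indices leaves $\ge 2t$ ``good'' indices, from which $t$ will be extracted once $k$-connectivity is verified.

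The main task is proving $T[V_i^{\star}]$ is $k$-connected for each good $i$. Fix $Z' \subseteq V_i^{\star}$ with $|Z'| \le k$: at most $k$ of the $10k$ gadgets in $\B_i$ intersect $Z'$, so at least $9k$ gadgets $\alpha$ are \emph{intact} (i.e., $U(\alpha) \cap Z' = \emptyset$), and \ref{itm:gadget-path} yields a directed path in $T[U(\alpha)]$ between any pair in $\Sm(\alpha) \times \Sp(\alpha)$ for each intact $\alpha$. For $x, y \in V_i^{\star} \sm Z'$, the routing proceeds case by case: if $x \in R_i$ is of type \ref{itm:eligible-good}, then $x$ has out- and in-neighbours in at least $10k - k - k = 8k$ intact sets $\Sm(\alpha)$ and $\Sp(\alpha)$ respectively, routing directly into the core of intact gadgets; if $x$ is eligible of another type, event (b) furnishes a surviving type-\ref{itm:eligible-good} neighbour in $R_i \sm Z'$ through which to reroute; if $x \in U(\alpha) \sm S(\alpha)$ with $\alpha$ intact, the path $P(\alpha)$ connects $x$ internally to $S(\alpha)$, whose vertices exit into the core via eligible neighbours in $R_i$ supplied by (a). Symmetric reasoning covers routes into $y$. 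The delicate case, where $x \in U(\alpha) \sm S(\alpha)$ for a \emph{non-intact} $\alpha$, is handled via \ref{itm:gadget-neighbourhood}: every in-neighbour of $\sp(\alpha)$ outside $X(\alpha)$ is an in-neighbour of $x$ (and symmetrically for $\smm(\alpha)$), and (a) applied to $\sp(\alpha), \smm(\alpha) \in S(\alpha)$ provides $\Omega(k)$ eligible such neighbours in $R_i \sm (Z' \cup X(\alpha))$, the avoidance of $X(\alpha)$ being built into the definition of eligibility. This non-intact-gadget case, together with the book-keeping required to track eligibility types and the $X(\alpha)$-avoidance through the routing, is the main obstacle. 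Once $k$-connectivity is in hand, property (iii) follows immediately from (c) (vertices assigned to non-selected buckets lie in $Z^{\star}$ and inherit the required degrees into $R_i \subseteq V_i^{\star}$), and choosing any $t$ good indices completes the construction.
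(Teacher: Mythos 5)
Your overall architecture is the same as the paper's: randomly bucket $W(\A_3)$, glue the $10k$ gadgets of $\B_i$ together with the eligible vertices landing in bucket $i$, route between any two surviving vertices through an intact gadget, and use $X(\alpha)$ together with \ref{itm:gadget-neighbourhood} to handle vertices of $U(\alpha)\sm S(\alpha)$. The connectivity routing itself (your third paragraph) is sound and matches the paper's Claim~\ref{claim:helpful}. The genuine gap is in your event (b), which is where the paper has to work hardest. First, for a vertex of type \ref{itm:eligible-in-good} or \ref{itm:eligible-bad} the definition of eligibility only guarantees $\tau_3 kt$ out-neighbours of type \ref{itm:eligible-good} \emph{or} \ref{itm:eligible-out-good}; it may be that essentially all of them are of type \ref{itm:eligible-out-good}, so your claim that such a $u$ gets $\Omega(k)$ type-\ref{itm:eligible-good} out-neighbours in $R_i$ does not follow. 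The property you actually need is recursive: the type-\ref{itm:eligible-out-good} out-neighbours must themselves acquire their own type-\ref{itm:eligible-good} in-support inside the bucket. This is exactly why the paper introduces the separate notion of \emph{helpful} vertices and proves Claim~\ref{claim:vertex-helpful} by a layered argument over $Z_1,\dots,Z_4$. Second, you quantify (b) over \emph{every} eligible $u\in R_i$, but the per-vertex success probability is only $1-\exp(-\Omega(\tau_3 k))$ while there can be $\Theta(n)$ eligible vertices with $n$ unbounded in terms of $k,t$; neither a union bound nor \Cref{prop:markov} gives you the universal statement. The repair is to keep in $V_i^\star$ only those vertices whose support did land in the bucket (the paper's helpful vertices), but then one must show that each $s\in S(\alpha)$ still has $k$ neighbours among the \emph{survivors}, and survival of a neighbour is correlated with the bucket assignment of that neighbour and of its own neighbours. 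The paper decouples this with a two-round exposure ($W_i'$ certifies helpfulness, $W_i''$ places helpful neighbours next to $s$); your single uniform assignment does not address the dependence, and (a)--(c) cannot simply be intersected via \Cref{prop:markov} as written.

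A secondary, fixable issue is the reservoir property. You draw the $n/100$ reservoir vertices from non-selected buckets, but the $t$ selected buckets together contain roughly $n/10$ vertices, which could in principle absorb all $n/50$ of your candidate reservoir vertices; moreover, whether a candidate has $k$ neighbours in $R_i$ and whether it lies in a selected bucket are both functions of the same partition. The paper avoids both problems by first setting aside a set $Y$ with probability $1/2$ that is disjoint from every candidate $V_j^\star$ by construction, and by certifying the reservoir degrees into $U(\B_i)$ (via the third bullet of \Cref{prop:eligible}) rather than into the random set $R_i$. I would adopt that device rather than try to patch the non-selected-bucket argument.
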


		The next notion, of \emph{helpful} vertices, will aid us in the proof of \Cref{prop:connected-parts}.
		Given subsets $\A \subseteq \A_3$ and $W \subseteq W(\A_3 \setminus \A)$, we say that a vertex $u$ is \emph{helpful} for $\A$ in $W$ if one of the following holds.
		\begin{enumerate}[label = \rm(H\arabic*)]
			\item \label{itm:helpful-good}
				$u \in \Vg \cap W$ and $u$ has out-neighbours in all but at most $k$ sets $\Sm(\alpha)$ and in-neighbours in all but at most $k$ sets $\Sp(\alpha)$ with $\alpha \in \A$.
			\item \label{itm:helpful-out-good}
				$u \in (\Vgp \setminus \Vgm) \cap W$ and $u$ has out-neighbours in all but at most $k$ sets $\Sm(\alpha)$ with $\alpha \in \A$ and at least $k$ in-neighbours as in \ref{itm:helpful-good}.
			\item \label{itm:helpful-in-good}
				$u \in (\Vgm \setminus \Vgp) \cap W$ and $u$ has in-neighbours in all but at most $k$ sets $\Sp(\alpha)$ with $\alpha \in \A$ and at least $k$ out-neighbours as in \ref{itm:helpful-good} or \ref{itm:helpful-out-good}.
			\item \label{itm:helpful-bad}
				$u \in \Vb \cap W$ and $u$ has at least $k$ in-neighbours as in \ref{itm:helpful-good} and at least $k$ out-neighbours as in \ref{itm:helpful-good} or \ref{itm:helpful-out-good}.
		\end{enumerate}

		The following claim illustrates how helpful vertices can be used along with sets $U(\A)$ to form $k$-connected sets.

		\begin{claim} \label{claim:helpful}
			Let $\A \subseteq \A_3$ be a set of size at least $3k$, and let $W \subseteq W(\A_3 \sm \A)$. Suppose every vertex in $S(\alpha)$ has at least $k$ out- and in-neighbours in $W \setminus X(\alpha)$ that are helpful for $\A$ in $W$, for every $\alpha \in \A$. Then there is a set $Y$ such that $U(\A) \subseteq Y \subseteq W \cup U(\A)$ and $T[Y]$ is $k$-connected.
		\end{claim}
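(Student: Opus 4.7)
The plan is to take $Y := U(\A) \cup H$, where $H$ is the set of all helpful vertices for $\A$ in $W$; this immediately satisfies $U(\A) \subseteq Y \subseteq U(\A) \cup W$ and $|Y| \ge |\A| \ge 3k \ge k+1$. To verify $k$-connectivity, I fix an arbitrary $Z \subseteq Y$ with $|Z| \le k$ and aim to show $T[Y \sm Z]$ is strongly connected. Setting $\A' := \{\alpha \in \A : U(\alpha) \cap Z = \emptyset\}$ gives $|\A'| \ge |\A| - |Z| \ge 2k$ intact gadgets, entirely contained in $Y \sm Z$. A preliminary observation is that inside each such $T[U(\alpha)]$, property \ref{itm:gadget-path} together with the transitive-tournament structure of $\Sp(\alpha)$ (with source $\sp(\alpha)$) and $\Sm(\alpha)$ (with sink $\smm(\alpha)$) implies that every vertex of $U(\alpha)$ reaches $\sp(\alpha)$ and $\smm(\alpha)$ reaches every vertex of $U(\alpha)$.

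The core of the argument is a bridging step: for each pair $\alpha, \beta \in \A'$, I aim to exhibit a directed path from $\sp(\alpha)$ to $\smm(\beta)$ in $T[Y \sm Z]$. By hypothesis $\sp(\alpha)$ has at least $k$ helpful out-neighbours $H^+_\alpha \subseteq W \sm X(\alpha)$ and $\smm(\beta)$ has at least $k$ helpful in-neighbours $H^-_\beta \subseteq W \sm X(\beta)$. When $(H^+_\alpha \cap H^-_\beta) \sm Z$ is non-empty, a single-hop bridge $\sp(\alpha) \to h \to \smm(\beta)$ works. Otherwise, I route through an intermediate intact $\gamma \in \A' \sm \{\alpha, \beta\}$ via
\[
\sp(\alpha) \to h_1 \to u_1 \in \Sm(\gamma) \to \smm(\gamma) \to P(\gamma) \to \sp(\gamma) \to u_2 \in \Sp(\gamma) \to h_2 \to u_3 \in \Sm(\beta) \to \smm(\beta),
\]
where $h_1 \in H^+_\alpha \sm Z$ has an out-neighbour $u_1 \in \Sm(\gamma)$ and $h_2 \in H \sm Z$ satisfies $\sp(\gamma) \to h_2$ and $h_2 \to u_3 \in \Sm(\beta)$. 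If $h_1$ is of type \ref{itm:helpful-in-good} or \ref{itm:helpful-bad}, one additional hop through an out-neighbour of $h_1$ of type \ref{itm:helpful-good} or \ref{itm:helpful-out-good} (of which it has $\ge k$ by definition) supplies the missing edge into $\Sm(\gamma)$. A double-counting argument over the $\ge 2k - 2$ candidate $\gamma$'s and the $\ge k$ helpful neighbours at each endpoint, combined with $|Z| \le k$, guarantees such a chain exists.

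The final step is to attach every remaining vertex of $Y \sm Z$ to the resulting skeleton. For $u \in U(\alpha)$ with $\alpha \in \A \sm \A'$, property \ref{itm:gadget-neighbourhood} transfers surviving helpful in-neighbours of $\sp(\alpha)$ outside $X(\alpha)$ to $u$, and symmetrically for $\smm(\alpha)$, plugging $u$ into the skeleton. For $u \in H \sm Z$, the definition of helpful is used directly: type \ref{itm:helpful-good} $u$ has $\ge |\A'| - k \ge k$ intact gadgets connected to it on each side; types \ref{itm:helpful-out-good}, \ref{itm:helpful-in-good} and \ref{itm:helpful-bad} bounce through their $\ge k$ good-or-out-good helpful neighbours, of which at least one survives $Z$. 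The main obstacle will be the bridging step, where the double counting must guarantee that for every pair of intact gadgets, either a single- or two-helpful-vertex route survives $Z$; the slack $|\A| \ge 3k$ against $|Z| \le k$, together with the $\ge k$ helpful out- and in-neighbours guaranteed for each $s \in S(\alpha)$, supplies the necessary budget for the counts to close.
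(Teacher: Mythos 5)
Your overall architecture (take $Y=U(\A)$ together with all helpful vertices, delete a small set $Z$, and route between gadgets that $Z$ misses) is the same as the paper's, but two of your key steps fail as written. First, the ``preliminary observation'' is false: $\sp(\alpha)$ is the \emph{source} of the transitive tournament on $\Sp(\alpha)$ and $\smm(\alpha)$ is the \emph{sink} of the one on $\Sm(\alpha)$, so the only internal connectivity \ref{itm:gadget-path} gives you is $u\to\smm(\alpha)\,P(\alpha)\,\sp(\alpha)\to v$ for $u\in\Sm(\alpha)$, $v\in\Sp(\alpha)$. A vertex of $\Sp(\alpha)\sm\{\sp(\alpha)\}$ need not reach $\sp(\alpha)$ inside $U(\alpha)$, and $\smm(\alpha)$ need not reach the other vertices of $\Sm(\alpha)$. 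Hence even for intact gadgets $\alpha\in\A'$ you cannot attach the vertices of $S(\alpha)$ to your skeleton by internal paths; you must invoke the hypothesis (each $s\in S(\alpha)$ has at least $k$ helpful out- and in-neighbours in $W\sm X(\alpha)$) for vertices of $S(\alpha)$ and \ref{itm:gadget-neighbourhood} for vertices of $U(\alpha)\sm S(\alpha)$, for \emph{every} $\alpha\in\A$, not only for $\alpha\in\A\sm\A'$.

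Second, and more seriously, the fallback in your bridging step points the wrong way. You try to enter $\beta$ by finding a helpful $h_2$ with an out-neighbour in $\Sm(\beta)$, but a helpful vertex is only guaranteed out-neighbours in all but at most $k$ of the sets $\Sm(\delta)$ with $\delta\in\A$; nothing prevents every helpful vertex you can reach from excluding the same $k$ gadgets, $\beta$ among them, so no double counting over intermediate $\gamma$'s can manufacture the required $h_2$. The target must be approached backwards, which is exactly what the paper does: a surviving helpful in-neighbour $h$ of $\smm(\beta)$ (taken from your $H^-_\beta$, with one extra backward hop if $h$ is of type \ref{itm:helpful-out-good} or \ref{itm:helpful-bad}) has in-neighbours in $\Sp(\delta)$ for all but at most $k$ of the $\delta\in\A'$, while a surviving helpful out-neighbour of $\sp(\alpha)$ reaches $\Sm(\delta)$ for all but at most $k$ of them; since $|\A'|>2k$ a common $\delta$ exists and \ref{itm:gadget-path} closes the route. (The paper in fact dispenses with a global skeleton and just runs this argument for an arbitrary pair $u,v\in Y\sm Z$.) Finally, note that your repeated appeals to ``at least $k$ neighbours, one of which survives $Z$'' and to $|\A'|>2k$ require $|Z|\le k-1$, not $|Z|\le k$.
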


		\begin{proof}
			Let $W'$ be the set of vertices in $W$ that are helpful for $\A$ in $W$. Set $Y := W' \cup U(\A)$. We will show that $T[Y]$ is $k$-connected.
			To do this, we need to show that for every subset $Z \subseteq Y$ of size at most $k-1$ and every $u, v \in Y \setminus Z$, there is a directed path in $T[Y \setminus Z]$ from $u$ to $v$; fix such $Z, u, v$. Let $\A' \subseteq \A$ be the set of elements $\alpha$ in $\A$ such that $U(\alpha)$ is disjoint of $Z$; so $|\A'| > 2k$. 

			First suppose that $u, v \in W'$. It is easy to check, by definition of helpfulness and by choice of $\A'$, that there is a path in $Y \setminus Z$ from $u$ to all but at most $k$ sets $\Sm(\alpha)$ with $\alpha \in \A'$, and from all but at most $k$ sets $\Sp(\alpha)$ with $\alpha \in \A'$ to $v$. In particular, there exists $\alpha \in \A'$ such that there is a path from $u$ to $\Sm(\alpha)$ and from $\Sp(\alpha)$ to $v$. Since $U(\alpha)$ is disjoint of $Z$, and there is a path from any element of $\Sm(\alpha)$ to any element of $\Sp(\alpha)$ in $U(\alpha)$, it follows that there is a path from $u$ to $v$ in $T[Y']$, as required.

			Next, if $u \notin W'$, then $u \in U(\alpha)$ for some $\alpha \in \A$. Since every vertex in $S(\alpha)$ has at least $k$ out-neighbours in $W' \setminus X(\alpha)$, the same holds for $u$ by choice of $X(\alpha)$ (see \ref{itm:gadget-neighbourhood}), and so $u$ has an out-neighbour $u'$ in $W' \setminus Z$. Similarly, if $v \notin W'$ then $v$ has an in-neighbour $v'$ in $W' \setminus Z$ (if $v \in W'$, put $v' = v$). Since, by the previous paragraph, there is a path from $u'$ to $v'$ in $Y \setminus Z$, it follows that there is a path from $u$ to $v$ in $Y \setminus Z$, as required.
		\end{proof}

		The next claim will allow us to find many helpful vertices for a subset $\A \subseteq \A_3$. 

		\begin{claim} \label{claim:vertex-helpful}
			Let $\A \subseteq \A_3$, let $Z$ be a random subset of $W(\A_3) \cap \Vo$, obtained by picking each vertex with probability at least $1/100t$, independently. Then every vertex $u \in W(\A_3 \setminus \A)$, which is eligible for $\A$, is helpful for $\A$ in $Z \cup \{u\}$, with probability at least $1 - 3\exp(-\tau_3 k / 10^5)$. 
		\end{claim}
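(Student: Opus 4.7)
The plan is to go through the four scenarios (E1)--(E4) under which $u$ can be eligible, and in each show that the corresponding helpfulness condition (H1)--(H4) holds in $Z\cup\{u\}$ with the required probability. The crucial observation driving everything is that the ``degree-type'' parts of the eligibility and helpfulness conditions (\emph{out-neighbors in all but at most $k$ sets $\Sm(\alpha)$}, etc.) are word-for-word identical; only the neighbor-count thresholds change, from $\tau_3 kt$ eligible neighbors down to $k$ helpful ones in $Z\cup\{u\}$. In particular, an (E1)-eligible vertex $u$ is automatically (H1)-helpful in $Z \cup \{u\}$, with no randomness needed, and the ``degree'' parts of (H2)/(H3) transfer from (E2)/(E3) for free.

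The routine probabilistic step handles case (E2): here $u$ has at least $\tau_3 kt$ (E1)-eligible in-neighbors, each of which becomes (H1)-helpful in $Z\cup\{u\}$ the moment it lands in $Z$. Since each vertex of $W(\A_3)\cap \Vo$ lies in $Z$ independently with probability at least $1/100t$, the number of such in-neighbors landing in $Z$ has expectation at least $\tau_3 k/100$, and \Cref{lem:chernoff} yields at least $k$ of them except with probability $\exp(-\tau_3 k / C_1)$ for an absolute constant $C_1$. The same argument handles the in-neighbor requirement of (H4) and, within (E3)/(E4), the out-neighbor requirement \emph{in the sub-case} where at least $\tau_3 kt / 2$ of the relevant out-neighbors of $u$ are (E1)-eligible.

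The main obstacle is the remaining sub-case of (E3)/(E4), in which at least $\tau_3 kt / 2$ of the relevant out-neighbors of $u$ are only (E2)-eligible. For such an out-neighbor $v$, being (H2)-helpful in $Z\cup\{u\}$ requires simultaneously $v\in Z$ \emph{and} $v$ having at least $k$ (E1)-eligible in-neighbors in $Z$, and the second event couples across different choices of $v$. To decouple, I would view $Z$ as containing two independent sub-samples $Z_1, Z_2$ of $W(\A_3)\cap \Vo$, each obtained by including every vertex independently with probability $1/200t$, and use $Z_1$ only to decide whether $v \in Z$ while using $Z_2$ only to decide whether $v$'s (E1)-eligible in-neighbors land in $Z$. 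For each fixed $v$ these two events are then independent: \Cref{lem:chernoff} gives the in-neighbor event for $v$ with probability $\geq 1 - \exp(-\tau_3 k / C_2)$; \Cref{prop:markov} shows that all but a small fraction of the $\tau_3 kt/2$ (E2)-eligible out-neighbors satisfy it; and one further Chernoff application to the independent events $\{v\in Z_1\}$ extracts at least $k$ such $v$ that additionally lie in $Z_1\subseteq Z$, with total failure probability at most $2\exp(-\tau_3 k/C_3)$.

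Finally, case (E4) simply combines the (E2) argument for in-neighbors with the (E3) argument for out-neighbors, so a union bound over the at most three failure events above (one for the in-neighbor count, up to two for the harder sub-case of the out-neighbor count) yields the stated bound $1 - 3\exp(-\tau_3 k / 10^5)$, using that $\tau_3$ is much larger than the absolute constants $C_i$ appearing in the Chernoff and Markov bounds; cases (E1)--(E3) are covered by subsets of the same failure events.
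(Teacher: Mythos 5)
Your proposal is correct and follows essentially the same route as the paper: the same four-case analysis matching \ref{itm:eligible-good}--\ref{itm:eligible-bad} to \ref{itm:helpful-good}--\ref{itm:helpful-bad}, a direct Chernoff bound for the easy cases, and a Markov-then-Chernoff decoupling for the sub-case of \ref{itm:eligible-in-good}/\ref{itm:eligible-bad} in which the relevant out-neighbours are only \ref{itm:eligible-out-good}-eligible. The one (cosmetic) difference is how independence is arranged there: you split $Z$ into two independent sprinkling rounds, whereas the paper partitions the sample according to the disjoint vertex classes $\Vg$ and $\Vgp \setminus \Vgm$, so that the event ``$v$ is helpful'' depends only on the $\Vg$-part of the sample while ``$v \in Z$'' depends only on the $\Vgp \setminus \Vgm$-part; both devices work.
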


		\begin{proof}
			Let $W'$ be the set of vertices in $W(\A_3) \cap \Vo$ that are eligible for $\A$, and let $Z' := Z \cap W'$. We think of $Z'$ as the union of $Z_1$, $Z_2$, $Z_3$ and $Z_4$, which are random subsets of $\Vg \cap W'$, $(\Vgp \setminus \Vgm) \cap W'$, $(\Vgm \setminus \Vgp) \cap W'$ and $\Vb \cap W'$, respectively, each obtained by including each potential vertex with probability $1 / 100t$, independently. 
			We consider four cases, according to the definition of a helpful vertex. It will be useful to note that if $u$ is helpful for $\A$ in a set $S$, then it is helpful for $\A$ in any set that contains $S$. 

			If $u \in \Vg \cap W'$, then $u$ is helpful for $\A$ in $\{u\}$. In particular, $u$ is helpful for $\A$ in $Z' \cup \{u\}$ with probability $1$.

			If $u \in (\Vgp \setminus \Vgm) \cap W'$, then $u$ has at least $\tau_3 kt$ in-neighbours in $\Vg \cap W'$. By Chernoff, at least $\tau_3 k / 10^3 \ge k$ of them are in $Z_1$, with probability at least $1 - \exp(-\tau_3 k / 10^3)$. In particular, $u$ is helpful in $Z_1 \cup \{u\}$, and thus in $Z' \cup \{u\}$, with probability at least $1 - \exp(-\tau_3 k / 10^3)$. 

			Next, suppose that $u \in (\Vgm \setminus \Vgp) \cap W'$, so $u$ has at least $\tau_3 kt$ out-neighbours in $\Vgp \cap W'$; denote the set of such out-neighbours by $N$, and write $N_1 = N \cap \Vg$ and $N_2 = N \setminus N_1$. If $|N_1| \ge |N|/2$, then by Chernoff, with probability at least $1 - \exp(-\tau_3 k / 10^4)$, at least $|N_1|/10^3t \ge \tau_3 k / 2000 \ge k$ of the vertices in $N_1$ are in $Z_1$, and so $u$ is helpful in $Z_1 \cup \{u\}$, with probability at least $1 - \exp(-\tau_3 k / 10^4)$. Now suppose that $|N_2| \ge |N|/2$. By the previous paragraph, every $v \in N_2$ is helpful for $\A$ in $Z_1 \cup \{v\}$ with probability at least $1 - \exp(-\tau_3 k / 10^3)$. Thus, by \Cref{prop:markov}, with probability at least $1 - \exp(-\tau_3 k / 10^4)$, at least $(1 - \exp(-\tau_3 k / 10^4))|N_2| \ge \tau_3 kt / 4$ vertices $v$ in $N_2$ are helpful in $Z_1 \cup \{v\}$. Conditioning on $Z_1$ satisfying this property, by Chernoff, there are at least $\tau_3 k / 10^4 \ge k$ vertices in $N_2$ that are in $Z_2$ and are helpful in $Z_1 \cup Z_2$, with probability at least $1 - \exp(-\tau_3 k / 10^4)$. It follows that $u$ is helpful in $Z_1 \cup Z_2 \cup \{u\}$, with probability at least $1 - 2\exp(-\tau_3 k / 10^4)$. 

			Finally, suppose that $u \in \Vb$. Then $u$ has at least $\tau_3 kt$ out-neighbours in $\Vgp \cap W'$ and at least $\tau_3 kt$ in-neighbours in $\Vg \cap W'$; denote these sets of out- and in-neighbours by $\Np$ and $\Nm$. By Chernoff, at least $\tau_3 k / 10^3$ vertices in $\Nm$ are in $Z_1$, with probability at least $1 - \exp(\tau_3 k / 10^3)$. By the previous paragraph and \Cref{prop:markov}, at least $k$ vertices in $\Np$ are helpful in $Z_1 \cup Z_2 \cup Z_3$, with probability at least $1 - 2\exp(\tau_3 k / 10^5)$. It follows that $u$ is helpful in $Z' \cup \{u\}$ with probability at least $1 - 3\exp(-\tau_3 k / 10^5)$.

			To summarise, we showed that every $u \in W'$ is helpful in $Z' \cup \{u\}$ with probability at least $1 - 3\exp(-\tau_3 k / 10^5)$, where $Z'$ is a random subset of $W'$ obtained by including each vertex with probability $1/100t$. The same thus holds for $Z$, which is a random subset of $W(\A_3) \cap \Vo$, obtained by including each vertex with probability at least $1/100t$, because we can couple the two random sets so that $Z' \subseteq Z$.
		\end{proof}

		\begin{proof} [Proof of \Cref{prop:connected-parts}]
			Let $\B_1, \ldots, \B_{10t}, \A_3$ be the sets produced by \Cref{prop:eligible} (recall that $\sigma_3 = 10$).
			Let $W := W(\A_3)$. We partition $W$ into sets $Y, W_1, \ldots, W_{10t}$ by putting each vertex in $Y$ with probability $1/2$, and otherwise in one of $W_1, \ldots, W_{10t}$, uniformly at random, independently.
			We claim that the following properties hold simultaneously with positive probability.
			\begin{enumerate}[label = \rm(\alph*)]
				\item \label{itm:helpful-reservoir}
					For every $i \in [10t]$, there are at least $n / 100$ vertices in $Y \cap \Vg$ that have at least $k$ out- and in-neighbours in $U(\B_i)$.
				\item \label{itm:helpful-connected}
					For at least $5t$ values of $i$, there is a set $Z_i$ such that $U(\B_i) \subseteq Z_i \subseteq U(\B_i) \cup W_i$ and $T[Z_i]$ is $k$-connected.
			\end{enumerate}
			Fix $i \in [10t]$. By assumption on $\B_i$, there is a set $W_e \subseteq \Vg \cap W$ of size at least $n/10$ whose vertices are eligible for $\B_i$, namely they have at least $k$ out- and in-neighbours in $U(\B_i)$. By Chernoff, $|W_e \cap W| \ge n / 100$ with probability at least $1 - \exp(-n/100)$. Thus, by a union bound, \ref{itm:helpful-reservoir} holds with probability at least $3/4$.

			Fix $i \in [10t]$, $\alpha \in \B_i$ and $s \in S(\alpha)$. Let $N$ be the set of out-neighbours of $s$ in $(W \cup U(\alpha)) \setminus X(\alpha)$ that are eligible for $\B_i$; so $|N| \ge \tau_3 kt$ by the assumptions (see \Cref{prop:eligible}).
			Let $p \in (0,1)$ satisfy $(1 - p)^2 = 1 - 1/20t$; so $p \ge 1/40t$. 
			Let $W_i'$ and $W_i''$ be two random sets, obtained by including each vertex in $W$ with probability $p$, independently. Notice that every vertex in $W$ is in $W_i' \cup W_i''$ with probability $1/20t$, so we may generate the sets $Y, W_1, \ldots, W_{10t}$ by taking $W_i$ to be $W_i' \cup W_i''$, then put each vertex in $W \setminus W_i$ in $Y$ independently with probability $(1 - 1/20t)/2$, and then put each of the remaining uncovered vertices in one of $W_1, \ldots, W_{i-1}, W_{i+1}, \ldots, W_{10t}$, independently and uniformly at random.

			By \Cref{claim:vertex-helpful} and \Cref{prop:markov}, the set $N'$ of vertices $u$ in $N$ that are helpful for $\B_i$ in $W_i' \cup \{u\}$ has size at least $|N|/2$, with probability at least $1 - 2\exp(-\tau_3 k / 10^6)$. Conditioning on this occurring, by Chernoff, $|N' \cap W_i''| \ge |N|/10^3 t \ge \tau_3 k / 10^3 \ge k$ with probability at least $1 - \exp(-|N| / 10^3 t) \ge 1 - \exp(-\tau_3 k / 10^3)$. It follows that $s$ has at least $k$ out-neighbours that are helpful for $\B_i$ in $(W_i \cup U(\alpha)) \setminus X(\alpha)$, with probability at least $1 - 3\exp(-\tau_3 k/10^5)$, and similarly for in-neighbours of $s$.
			A union bound, combined with the assumption that $|\B_i| = 10k$, shows that, with probability at least $1 - 60\rho k \exp(-\tau_3k / 10^5)$, for every $\alpha \in \B_i$, every $s \in S(\alpha)$ has at least $k$ out- and in-neighbours that are helpful for $\B_i$ in $(W_i \cup U(\alpha)) \setminus X(\alpha)$. 
			
			It follows from \Cref{prop:markov} that the latter property holds for at least $5t$ values of $i \in [10t]$, with probability at least $3/4$. 
			By \Cref{claim:helpful}, property \ref{itm:helpful-connected} holds with probability at least $3/4$. 

			This completes the proof that \ref{itm:helpful-reservoir} and \ref{itm:helpful-connected} hold simultaneously with positive probability. It is now easy to prove \Cref{prop:connected-parts}. Indeed, fix an instance of $Y, W_1, \ldots, W_{10t}$ such that \ref{itm:helpful-reservoir} and \ref{itm:helpful-connected} hold. Let $I$ be the set of indices $i \in [10t]$ for which \ref{itm:helpful-connected} holds (so $|I| \ge 5t$). For $i \in I$, let $Z_i$ be as in \ref{itm:helpful-connected}. Note that $|Z_i| \le n/t$ for all but at most $t$ indices $i \in I$. It follows that there are distinct $i_1, \ldots, i_{t} \in I$ such that $|Z_{i_j}| \le n/t$ for $j \in [t]$. Take $V_j := Z_{i_j}$. It is easy to see that the properties in \Cref{prop:connected-parts} hold.
		\end{proof}

	\subsection{Partition into connected sets} \label{subsec:partition}

		We are almost ready to complete the proof of \Cref{thm:main}.
		The following proposition does most of the remaining work.

		\begin{proposition} \label{prop:extending-partition}
			Let $Z, Y, V_1 \ldots, V_{t}$ be disjoint subsets of $V$ that satisfy the following properties.
			\begin{itemize}
				\item
					$|V_i| \ge 2k$ for $i \in [t]$.
				\item
					$T[V_i]$ is $k$-connected for $i \in [t]$.
				\item
					Every vertex in $Y$ has at least $k$ out- and in-neighbours in at least $3t/4$ sets $V_i$ with $i \in [t]$.
				\item
					For every $i \in [t]$, there are at least $n/10^3$ vertices in $Y$ that have at least $k$ out-\ and in-neighbours in $V_i$.
				\item
					Every vertex in $Z$ either has at least $k$ out-neighbours in $V_i$ for at least $3t/4$ values of $i \in [t]$, or has at least  $\max\{10^{10}|Z|, 100kt\}$ out-neighbours in $Y \cup V_1 \cup \ldots \cup V_{t}$; similarly for in-neighbours.
			\end{itemize}

			Then there is a partition $\{V_1', \ldots, V_{t}'\}$ of $Z \cup Y \cup V_1 \cup \ldots \cup V_{t}$ such that $V_i \subseteq V_i'$ and $T[V_i']$ is $k$-connected, for $i \in [t]$. 
		\end{proposition}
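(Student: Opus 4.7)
The plan is to randomly distribute the vertices of $Y$ into sets $Y_1, \ldots, Y_t$, then assign each $v \in Z$ to some index $i(v) \in [t]$ so that $v$ has at least $k$ out-neighbours and at least $k$ in-neighbours in $V_i \cup Y_i$, and finally verify that $V_i' := V_i \cup Y_i \cup \{v \in Z : i(v) = i\}$ induces a $k$-connected tournament. For the random step, set $I(y) := \{i \in [t] : y \text{ has at least } k \text{ out-neighbours and at least } k \text{ in-neighbours in } V_i\}$, so $|I(y)| \ge 3t/4$ by hypothesis, and assign each $y \in Y$ to a uniformly random element of $I(y)$, independently over $y$.

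I would then show that, with positive probability, the desired $i(v)$ exists for every $v \in Z$. The argument splits into four cases according to which of the two alternatives in the fifth hypothesis holds for each direction. If condition 1 (many $V_i$'s with $\ge k$ neighbours) holds for both directions, intersecting the two size-$\ge 3t/4$ sets of eligible indices yields $i(v)$ deterministically. If condition 2 (at least $\max\{10^{10}|Z|,\, 100kt\}$ neighbours in the reservoir $Y \cup V_1 \cup \ldots \cup V_t$) holds for some direction, a Chernoff-type concentration on the random $Y$-assignment, combined with the averaging bound $|I(y) \cap J| \ge |I(y)| + |J| - t \ge t/2$ for any $J \subseteq [t]$ with $|J| \ge 3t/4$, yields enough candidate indices $i$. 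The most delicate subcase is when condition 1 holds for the out-direction (forcing $i(v) \in J^+$) and condition 2 for the in-direction: depending on whether $v$'s in-neighbours in the reservoir are predominantly in $Y$ or in the $V_j$'s with $j \notin J^+$, one uses either direct concentration of random $Y$-placement on indices $i \in J^+$, or a complementary strategy: since $|V_j| \le n/t$ forces $v$ to have at most $n/4$ in-neighbours in $\bigcup_{j \notin J^+} V_j$, the tournament complementarity forces either many in-neighbours of $v$ in $Y$ (easy case) or pushes us to pick $j \notin J^+$ where $v$ has many in-neighbours in $V_j$ itself, using $|Y| \ge n/10^3 \gg kt$ (coming from the fourth hypothesis and the global connectivity of $T$) to guarantee that the random $Y_j$ provides the $\ge k$ out-neighbours of $v$ needed to place $v$ in $V_j'$.

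Once $i(v)$ is chosen for every $v \in Z$, $k$-connectivity of $T[V_i']$ is verified by adding vertices to $V_i$ one at a time, in the order $Y_i$ followed by $Z \cap V_i'$: each added vertex has at least $k$ out- and at least $k$ in-neighbours in the current set (for $y \in Y_i$ this holds in $V_i$ alone by choice of $I(y)$; for $v \in Z$ this holds in $V_i \cup Y_i$ by choice of $i(v)$), and the standard fact that such additions preserve strong $k$-connectivity in tournaments then completes the proof. The main obstacle is the delicate subcase in the second step described above, which requires a careful case analysis combining the random $Y$-assignment, Chernoff bounds, the magnitudes $\max\{10^{10}|Z|,\, 100kt\}$, the size lower bound on $Y$, and the tournament complementarity of in- and out-neighbourhoods.
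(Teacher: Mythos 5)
Your proposal is correct and follows essentially the same route as the paper's proof: a random assignment of each $y \in Y$ to a uniformly chosen index in $I(y)$, a per-vertex case analysis for $z \in Z$ (deterministic when both index sets are large, Chernoff concentration on where the $Y$-neighbours land otherwise, splitting on whether the reservoir neighbours sit mostly in $Y$ or mostly in the $V_j$'s), a union bound over $Z$, and the standard vertex-addition argument for preserving strong $k$-connectivity. The "delicate subcase" you isolate is exactly the paper's second case, handled there via the counting argument on the auxiliary bipartite graph between $N^+$ and $I^-$ using the fourth hypothesis.
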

		
		\begin{proof}
			For $y \in Y$, let $I(y)$ be the set of elements $i \in [t]$ such that $y$ has at least $k$ out- and in-neighbours in $V_i$; so $|I(y)| \ge 3t/4$ for every $y \in Y$. Let $I_{\ell}(y)$ be the set of indices $i \in I(y)$ with $|V_i| \ge \frac{nk}{10^{10}\log n}$ and write $I_s(y) = I(y) \setminus I_{\ell}(y)$.
			We form a random partition $\{Y_1, \ldots, Y_{t}\}$ of $Y$ as follows: for each $y \in Y$, independently, put $y$ in $Y_i$ for some $i \in I(y)$ so that the following holds.
			\begin{equation*}
				\Pr\left(y \in Y_i\right) = \left\{
					\begin{array}{ll}
						\frac{1}{|I(y)|} & \text{if } I_{\ell}(y) = \emptyset \text{ or } I_s(y) = \emptyset  \\
						\frac{1}{2|I_{\ell}(y)|} & \text{if } I_{\ell}(y), I_s(y) \neq \emptyset \text{ and } i \in I_{\ell}(y) \\
						\frac{1}{2|I_{s}(y)|} & \text{if } I_{\ell}(y), I_s(y) \neq \emptyset \text{ and } i \in I_{s}(y).
					\end{array}
				\right.
			\end{equation*}
			Observe that $T[V_i \cup Y_i]$ is $k$-connected for every $i \in [t]$.

			We show that, with positive probability, for every $z \in Z$ there exists $i \in [t]$ such that $z$ has at least $k$ out- and in-neighbours in $V_i \cup Y_i$. If true, there is a partition $\{Z_1, \ldots, Z_{t}\}$ of $Z$ such that every $z \in Z_i$ has at least $k$ out- and in-neighbours in $V_i \cup Y_i$, for $i \in [t]$. Then $T[V_i \cup Z_i \cup Y_i]$ is $k$-connected, and so we can take $V_i' = V_i \cup Z_i \cup Y_i$ for $i \in [t]$.

			Fix $z \in Z$. We will show that $z$ has at least $k$ out- and in-neighbours in some set $V_i \cup Y_i$ with $i \in [t]$, with probability larger than $1 - 1/|Z|$. This, combined with a union bound, would prove \Cref{prop:extending-partition}, as explained in the previous paragraph.

			\def \Ip {I^+}
			\def \Im {I^-}
			Let $\Ip$ be the sets of indices $i \in [t]$ such that $z$ has at least $k$ out-neighbours in $V_i$, and define $\Im$ analogously for in-neighbours.
			Note that $z$ has either at least $k$ out-neighbours or at least $k$ in-neighbours in $V_i$, for each $i \in [t]$ (as $|V_i| \ge 2k$), implying $\Ip \cup \Im = [t]$.
			Thus, without loss of generality, $|\Ip| \ge t/2$. Note that if $\Ip \cap \Im \neq \emptyset$ then $z$ satisfies the above event with probability $1$. So we may assume $|\Im| \le t/2$, which implies, by assumption, that $z$ has at least $\max\{10^{10}|Z|, 100kt\}$ in-neighbours in $Y \cup V_1 \cup \ldots \cup V_t$.

			Let $\Np$ and $\Nm$ be the out- and in-neighbourhoods, respectively, of $z$ in $Y$. 
			We consider two cases, according to the size of $\Nm$.

			Suppose first that $|\Nm| \ge \min\{\frac{1}{2}\max\{10^{10} |Z|, 100kt\}, n/10^4\}$. Given a vertex $u \in \Nm$, it is in $\bigcup_{i \in \Ip} Y_i$ with probability at least $1/8$ (because $|I(u) \cap \Ip| \ge t/4$ and, for each $i \in I(y)$, the vertex $u$ is in $Y_i$ with probability at least $\frac{1}{2|I(y)|}$). It follows by Chernoff that $\left|\Nm \cap \bigcup_{i \in \Ip} Y_i\right| \ge |\Nm|/16 \ge kt$ (using $n \ge \tau_1 kt$), with probability at least $1 - \exp(-|\Nm|/64) \ge 1 - \exp(-\min\{n/10^{6}, |Z|\}) > 1 - 1 / |Z|$. Hence $z$ has at least $k$ in-neighbours in some set $Y_i$ (and at least $k$ out-neighbours in $V_i$) with $i \in \Ip$, with probability larger than $1 - 1 / |Z|$, as required.

			We now assume that $|\Nm| \le \min\{\frac{1}{2}\max\{10^{10}|Z|, 100kt\}, n/10^4\}$. Then $z$ has at least $\frac{1}{2}\max\{10^{10}|Z|, 100kt\} - |Z|$ in-neighbours in $\bigcup_i V_i$ and $|\Np| \ge |Y| - |\Nm| \ge |Y| - n/10^4$.
			For $I \subseteq \Im$, let $N'(I)$ be the set of vertices $u$ in $\Np$ that have at least $k$ out- and in-neighbours in $V_j$, for at least $|I| / 10^4$ values $j \in I$.
			We claim that $|N'(I)| \ge n / 10^4$. To see this, form an auxiliary bipartite graph $H$ with parts $\Np$ and $I$, such that $uj$ (with $u \in \Np$ and $j \in I$) is an edge of $H$ if $u$ has at least $k$ out- and in-neighbours in $V_j$. Then, by assumption on $Y$ and $\Np$, we have $e(H) \ge |I| \cdot (n / 10^3 - n / 10^4)$. Since $e(H) \le |N'(I)| \cdot |I| + n \cdot |I| / 10^4$, we find that $|N'(I)| \ge n / 10^4$, as claimed.

			Now, if $|\Im| > \frac{10^{10}t \log |Z|}{n}$, since every vertex in $N'(\Im)$ is in $\bigcup_{j \in \Im} Y_j$ with probability at least $|\Im| / 10^5 t$, the following holds
			\begin{equation*}
				\Big|N'(\Im) \cap \bigcup_{j \in \Im} Y_j\Big| 
				\ge \frac{|N'(\Im)| \cdot |\Im|}{10^6 t}
				\ge \frac{n |\Im|}{10^{10} \, t}
				\ge k|\Im|,
			\end{equation*}
			(using $n \ge \tau_1 kt$) with probability at least 
			\begin{equation*}
				1 - \exp\left(-\frac{n|\Im|}{10^{10} t}\right) > 1 - \exp(-\log |Z|) = 1 - 1/|Z|.
			\end{equation*}
			In particular, $|N'(\Im) \cap Y_j| \ge k$ for some $j \in \Im$, with probability larger than $1 - 1/|Z|$. Since $|N'(\Im) \cap Y_j| \ge k$ and $j \in \Im$ imply $z$ has at least $k$ out- and in-neighbours in $Y_j \cup V_j$, the requirements are satisfied in this case.

			Suppose thus that $|\Im| \le \frac{10^{10} t \log |Z|}{n}$.
			Then, there is $i_0 \in \Im$ such that $V_{i_0}$ has at least the following number of in-neighbours of $z$.
			\begin{equation*}
				\frac{\frac{1}{2}\max\{10^{10}|Z|,100kt\} - |Z| - kt}{|\Im|}
				\ge \frac{\frac{1}{4} \cdot 10^{10}|Z| + \frac{1}{4} \cdot 100kt - |Z| - kt}{\frac{10^{10} t \log |Z|}{n}}
				\ge \frac{24 ktn}{10^{10}t \log |Z|}
				\ge \frac{nk}{10^{10} \log n}.
			\end{equation*}
			In particular, $|V_{i_0}| \ge \frac{nk}{10^{10}\log n}$, and thus every $u \in N'(\{i_0\})$ satisfies $i_0 \in I_{\ell}(u)$, implying that $u \in Y_{i_0}$ with probability $\frac{1}{2|I_{\ell}(u)|} \ge \frac{k}{10^{11}\log n}$. Hence, the following holds
			\begin{equation*}
				|N'(\{i_0\}) \cap Y_{i_0}| 
				\ge \frac{nk}{10^{16} \log n} \ge k
			\end{equation*}
			with probability at least
			\begin{equation*}
				1 - \exp\left(-\frac{n}{10^{16} \log n}\right)
				\ge 1 - \exp(-\log n)
				\ge 1 - \frac{1}{n} 
				\ge 1 - \frac{1}{|Z|}.
			\end{equation*}
			As before, this shows that $z$ has at least $k$ out- and in-neighbours in $Y_{i_0} \cup V_{i_0}$, with probability at least $1 - 1/|Z|$, as required.
		\end{proof}

		Consider sets $V_1, \ldots, V_{t}$ as in \Cref{prop:connected-parts}. Let $Z := V \setminus (V_1 \cup \ldots \cup V_{t})$, and write $Y := Z \cap \Vg$, $Z_1 := Z \cap (\Vgp \setminus \Vgm)$, $Z_2 := Z \cap (\Vgm \setminus \Vgp)$, $Z_3 := Z \cap \Vb$ and $Z_4 := Z \setminus \Vo$.
		Let $\{Y_1,Y_2, Y_3, Y_4\}$ be a random partition of $Y$. We claim that the following properties hold simultaneously with positive probability.
		\begin{enumerate}[label = \rm(\alph*)]
			\item \label{itm:end-eligible}
				Every vertex in $Y$ has at least $k$ out- and in-neighbours in $V_i$, for at least $3t/4$ values $i \in [t]$.
			\item \label{itm:end-reservoir}
				At least $n/10^3$ vertices in $Y_i$ have at least $k$ out- and in-neighbours in $V_j$, for $i \in [4]$ and $j \in [t]$.
			\item \label{itm:end-degree-right}
				Every vertex in $Z_i$ either has at least $k$ out-neighbours in at least $3t/4$ sets $V_i$, or has at least $\max\{10^{10} |Z_i|, 100 kt\}$ out-neighbours in $Z_1 \cup \ldots \cup Z_{i-1} \cup Y_1 \cup \ldots \cup Y_i \cup V_1 \cup \ldots \cup V_{t}$; similarly for in-neighbours.
		\end{enumerate}
		Note that \ref{itm:end-eligible} holds deterministically. Indeed, given $y \in Y = \Vg$, it has out-neighbours in all but at most $kt$ sets $U(\alpha)$. Since each set $V_i$ contains at least $10k$ sets $U(\alpha)$ (this follows from the assumption that each $V_i$ contains a set $\B_j$, which in turn contains at least $10k$ sets $U(\alpha)$), it follows that the number of indices $i \in [t]$ for which $y$ has fewer than $k$ out-neighbours in $V_i$ is at most $kt / 9k = t/9$. An analogous argument holds for in-neighbours of $y$, showing that $y$ has at least $k$ out- and in-neighbours is at least $7t/9 \ge 3t/4$ sets $V_i$.

		Recall that, for every $j \in [t]$, there are at least $n / 100$ vertices in $Y$ that have at least $k$ out- and in-neighbours in $V_j$. Thus, by Chernoff and a union bound, \ref{itm:end-reservoir} holds with probability at least $1 - 4t \exp(-n/10^4) \ge 3/4$.

		We now show that \ref{itm:end-degree-right} holds with probability at least $15/16$ for $i = 1$. Similar arguments can be used to show that the same holds for every $i \in [4]$, proving that \ref{itm:end-degree-right} holds with probability at least $3/4$.

		Fix $z \in Z_1$. Since $z \in \Vgp$, it has at least $k$ out-neighbours in at least say $3t/4$ sets $V_i$, similarly to a paragraph above.
		Because $z$ is in $\Vo$ and by \ref{itm:gadget-bad-in}, $|\Nm(z) \cap \Vg| \ge \max\{10^{11}|\Vbm|, \tau_1 kt/2\}$. 
		Denote $V' := V_1 \cup \ldots \cup V_t$. Then $\Vg \subseteq Y \cup V'$. Thus, as $Z_1 \subseteq \Vbm$,
		\begin{equation*}
			\big|\Nm(z) \cap (Y \cup V')\big| 
			\ge \max\{10^{11}|\Vbm|, \tau_1 kt/2\}
			\ge \max\{10^{11} |Z_1|, 1000kt\}.
		\end{equation*}
		It follows, using Chernoff, that $\big|\Nm(z) \cap (Y_1 \cup V')\big| \ge \max\{10^{10}|Z_1|, 100kt\}$ with probability at least $1 - \exp(-\max\{10^{10}|Z_1|, 100kt\})$. Thus, \ref{itm:end-degree-right} holds with probability at least $1 - |Z_1|\exp(-\max\{10^{10}|Z_1|, 100kt\}) \ge 15/16$, as claimed.

		Apply \Cref{prop:extending-partition} four times: first with $Z_1, Y_1, V_1, \ldots, V_{t}$, then with $Z_2, Y_2, V_1', \ldots, V_{t}'$, where $V_1', \ldots, V_{t}'$ are the sets resulting from the first application of the claim, and so on. We end up with a partition $\{U_1, \ldots, U_{t}\}$ of $V$ where $T[U_i]$ is $k$-connected for $i \in [t]$, as required.
	\end{proof}

\section{Conclusion} \label{sec:conclusion}

	Recall that $f_t(k_1, \ldots, k_r)$ is the minimum $K$ such that the vertices of every strongly $K$-connected tournament can be partitioned into $t$ sets, the $i^{\text{th}}$ of which induces a strongly $k_i$-connected tournament. We showed that $f_t(k, \ldots, k) = O(kt)$, which is tight up to the implicit constant factor. It would be interesting to evaluate $f_t(k_1, \ldots, k_t)$ when possibly $k_1, \ldots, k_t$ vary significantly. 

	\begin{question}
		Is it true that $f_t(k_1, \ldots, k_t) = O(k_1 + \ldots + k_t)$?
	\end{question}
    Note that it would be enough to show that for every  $k_1\geq k_2$ one has that $f_2(k_1,k_2)=k_1+O(k_2)$.
    
	It would also be very interesting, but probably very hard, to determine if the analogue of $f_t(k_1, \ldots, k_t)$ for digraphs (which are not necessarily tournaments) holds. 

	\begin{question} [Question 1.3 in \cite{kuhn2016proof}]
		Is there a function $g$ such that, for every positive integer $k$, the vertices of every strongly $g(k)$-connected digraph can be partitioned into two sets inducing strongly $k$-connected subdigraphs?
	\end{question}

	Finally, we remark that Kim, K\"uhn and Osthus \cite{kim2016bipartitions} proved that for every integer $k \ge 1$ there exists $K$ such that if $T$ is a strongly $K$-connected tournament, then there is a partition $\{V_1, V_2\}$ of $V(T)$ such that $T[V_1]$, $T[V_2]$ and $T[V_1, V_2]$ are $k$-strongly connected. Denote the minimum such $K$ by $h(k)$. Their proof shows $h(k) = O(k^6 \log k)$. It would be interesting to determine the correct order of magnitude of $h(k)$.
	
	\begin{question}
		Is $h(k) = O(k)$?
	\end{question}

\subsection*{Acknowledgements}

	We would like to thank Lantao Zou for pointing out an error in a previous version of this paper. We would also like to thank the referees for helpful comments.

	\bibliography{main}
	\bibliographystyle{amsplain}
\end{document}